\newtheoremstyle{assumptionstyle}{0in}{0in}{\normalfont}{0.5em}{\itshape}{:}{.5em}{}
\newtheoremstyle{propertystyle}{0in}{0in}{\normalfont}{}{\bf}{:}{.5em}{}
\theoremstyle{plain}
\newtheorem{theorem}{Theorem}
\theoremstyle{propertystyle}
\newtheorem{assumption}{Assumption}
\theoremstyle{propertystyle}
\newtheorem{property}{Property}
\newcommand\blfootnote[1]{%
  \begingroup
  \renewcommand\thefootnote{}\footnote{#1}%
  \addtocounter{footnote}{-1}%
  \endgroup
}
\begin{document}
\setlength{\abovedisplayskip}{3pt}
\setlength{\belowdisplayskip}{3pt}

%IEEE Transactions on Robotics - 6,6,10,10,17 pages of 5 sampled
%\title{Reactive Trajectory Generation in an Unknown Environment}
\title{Reactive Trajectory Generation for Multiple Vehicles in Unknown Environments with Wind Disturbances}

\author{Kenan Cole,% <-this % stops a space
\thanks{Kenan Cole is a graduate student in the Mechanical and Aerospace Engineering Department,     The George Washington University, Washington, DC 20052, USA}
        %{\tt\small albert.author@papercept.net}}%
        ~Adam M. Wickenheiser%
\thanks{Adam M. Wickenheiser is an assistant professor in the Mechanical and Aerospace Engineering Department, The George Washington University,
        Washington, DC 20052, USA}}
        
\maketitle

% Abstract
\begin{abstract}
Unmanned aerial vehicle (UAV) use continues to increase, including operating beyond line of sight in unknown environments where the vehicle must autonomously generate a trajectory to safely navigate. In this article, we develop a trajectory generation algorithm for vehicles with second-order dynamics in unknown environments with bounded wind disturbances where the vehicle only relies on its on-board distance sensors and communication with other vehicles to navigate. The proposed algorithm generates smooth trajectories and can be used with high-level planners and low-level motion controllers. The algorithm computes a maximum safe cruise velocity for the vehicle in the environment and guarantees that the trajectory does not violate the vehicle's thrust limitation, sensor constraints, or user-defined clearance radius around other vehicles and obstacles. Additionally, the trajectories are guaranteed to reach a stationary goal position in finite time given a finite number of bounded obstacles. Simulation results demonstrate the algorithm properties through two scenarios: (1) a quadrotor navigating through a moving obstacle field to a goal position, and (2) multiple quadrotors navigating into a building to different goal positions. 
\end{abstract}

% Begin paper
\section{Introduction}
%note ND park granted access by FAA to do BLOS flights for UAVs
Unmanned aerial vehicles (UAVs) continue to become more prolific, with a new focus on enabling autonomous navigation. The push for beyond-line-of-sight (BLOS) operation is becoming more of a reality with improved sensors such as miniature radars weighing as little as 120g with ranges on the order of hundreds of meters \cite{Echodyne},\cite{Newmeyer2016}. Additionally, laser range finders weighing as little as 120g provide 360\degree~coverage and ranges up to 40m \cite{SweepV1}. Associated with using this technology are the challenges of autonomous sense and avoid, how to operate in unknown and potentially harsh environments, and how to compensate for hardware constraints such as maneuverability and sensor limitations. These constraints are particularly important for vehicles with second-order dynamics where the vehicle cannot turn instantaneously, so the trajectory generation algorithm must compensate. Collision-free trajectory generation to a goal position for each vehicle under hardware limitations is the focus of this article.  

There are several approaches to trajectory generation in the presence of obstacles and/or other vehicles. Hoy, et.~al.~\cite{Hoy2014} provide a good summary article of various approaches and desirable algorithm features. The most popular approaches include global planners, local and reactive planners, and formation controllers. In the trajectory generation literature, global optimization techniques are prevalent \cite{Turpin2014,Bhattacharya2015,VanLoock2014} because for a known environment, they can ensure convergence to the goal position. Global optimization is not possible for our application where the environment is unknown and dynamic.

Local planners are similar to global planners but examine a shorter time window to reduce the computational expense. They can also address obstacles that may not be known \textit{a priori}\normalfont. For example, Alonso-Mora et al.~\cite{AlonsoMora2015} take the trajectory from a global planner and locally modify it to address any additional constraints based on other vehicle motion. Shiller et al.~\cite{Shiller2013} take a similar approach by optimizing the trajectory around immediate obstacles. One of the main drawbacks to local planners is the lack of an overall safety or convergence guarantee since the optimization is occurring for short time windows for only the closest obstacles. 

Reactive controllers are a type of local planner that generate the trajectory directly as the environment is sensed. These approaches utilize distance sensors to determine course changes \cite{Chunyu2010, Tang2013, Matveev2015}. While these solutions are generally not optimal, they are typically computationally faster than the optimized solutions and do not require convergence of an optimization algorithm to generate a viable solution. Their drawback however is that they do not address the smoothness of the trajectory. This can be problematic if the desired navigation requires more thrust than the vehicle can produce, and/or if the higher derivatives of the trajectory are not bounded, which may violate vehicle controller requirements. %Additionally, collision avoidance is only addressed for a single vehicle navigating an obstacle field. 

Formation controllers typically govern the motion of multiple vehicles using a reduced set of parameters or states and also provide solutions for collision avoidance. Examples of collision avoidance methods include potential fields \cite{Chang2015}, decentralized cooperation through sharing possible trajectory sets \cite{Bekris2012}, and navigation of the formation as a rigid body \cite{AlonsoMora2015,Zhang2010, Sarkar2013}. In the scenario we consider, the behavior is more similar to swarms, which may change composition and formation and are defined by only a few parameters. Groups such as \cite{Belta2004} consider swarm behavior in obstacle-free environments, whereas \cite{Hedjar2014} relies on a distributed optimization between the vehicles to avoid obstacles and maintain the formation. Our algorithm assures the safety of vehicles in the formation and also smoothly and safely navigates re-tasked vehicles out of the formation. In addition, our algorithm can be applied to clusters of formations of vehicles with their own clearance radii.

The physical limitations of the vehicle, such as maneuverability, sensing, and control input constraints, must also be considered to ensure the generated trajectory is feasible. In the literature there are various works that consider limitations such as sensor range (\cite{Chunyu2010}, \cite{Tang2013},  \cite{Choi2013}\nocite{Roelofsen2015}\nocite{Hoy2012}-\cite{Ferrera2017}), maximum velocity (\cite{Tang2013}, \cite{Matveev2015}, \cite{Bekris2012},\cite{Hoy2012}, \cite{Ferrera2017}), clearance radius (\cite{AlonsoMora2015}, \cite{Chunyu2010}, \cite{Matveev2015}, \cite{Choi2013}, \cite{Hoy2012}, \cite{Ferrera2017}), and turning rate (\cite{Chunyu2010}, \cite{Choi2013}, \cite{Hoy2012}, \cite{Ferrera2017}). Setting bounds on only a subset of these parameters may be reasonable for certain environments; however, all parameters are important in potentially harsh and unknown environments to ensure the trajectory is not too aggressive. Of the works reviewed, only a few consider all of these constraints simultaneously, but none consider environmental disturbances as input to the trajectory generation. Examination of disturbances is much more prevalent in vehicle controller literature to show ultimate bounded or asymptotic stability \cite{Cabecinhas2013}\nocite{Waslander2009}\nocite{Sydney2013}-\cite{FischerNL2014}. To achieve these stability guarantees, the controllers require the desired trajectory higher derivatives to exist and be bounded. In order to meet these criteria, the control authority to overcome the disturbance must also be considered when generating the trajectory.

%but there are other limitations such as not considering second order dynamics or moving obstacles (\cite{Hoy2012}) and taking advantage of rotation in place and not considering disturbances (\cite{Ferrera2017}) that prevent using these for our application.} 

%The effect of an external wind on the vehicle is likely not trivial in terms of the drag, thrust, and blade flapping \cite{Pounds2006}, \cite{Hoffmann}.

%Moreover, the aforementioned studies do not consider environmental 

To address each of these areas, we build upon \cite{Cole2017}, which describes trajectory generation for groups of quadrotors in unknown environments that bounds the maximum cruise velocity and respects thrust limitations and sensor constraints. In this article, we establish guarantees with respect to the obstacle spacing, which was not explicitly considered in the prior work. Additionally, we look at the maneuverability of vehicles separate from the obstacles to enable more aggressive maneuvering. Lastly, we account for the goal position more explicitly when determining course and velocity changes to reduce the trajectory length/time.

We organize the rest of the paper by first defining the problem, algorithm properties, and operating assumptions in Sec.~\ref{SecProbDef}. The trajectory generation is defined in Sec.~\ref{SecTrajGen}, which describes how to smoothly adjust vehicle course and/or velocity to safely clear obstacles and other vehicles. Section \ref{SecSafetyGuarantee} provides the analysis for bounding the trajectory acceleration to respect thrust limitations and bounding the maximum cruise velocity to safely navigate the environment. The vehicle dynamics and controller for the simulation case study are given in Sec.~\ref{SecVehController}. Two simulation case studies in Sec.~\ref{SecSim} demonstrate the trajectory generation algorithm's features. Finally Sec.~\ref{SecConclusion} provides concluding remarks. 

\section{Problem Definition}
\label{SecProbDef}
We define an algorithm that generates a trajectory for each vehicle that satisfies Properties 1 and 2 for an environment similar to Fig.~\ref{FigDetObsInPath}. These properties are rigorously achieved, as shown in Sec.~\ref{SecSafetyGuarantee}, under the following assumptions, some of which may be relaxed as discussed in Sec.~\ref{SecConclusion}.

\subsection{Algorithm Properties}
\label{SubSecAlgProps}
\begin{property} Generation of a piecewise-smooth (with isolated bounded discontinuities) desired trajectory $\mathbf{p}_d \in \mathbb{R}^3$ where the derivatives $\mathbf{p}_d^{(i)} \in \mathbb{R}^3,~\forall i=0,1,\ldots,n$ exist, are bounded, and respect the vehicle's maximum thrust, $f_{max}$, for a translational wind velocity of unknown direction and bounded magnitude, $||\mathbf{v}_{air}|| \leq v_{air,max}$.
\end{property}
\begin{property} Clearance of all obstacles and other vehicles by a user-defined clearance radius, $r_c$, which takes into account the vehicle's size as well as measurement, estimation, and tracking errors. 
\end{property}
%\begin{property} Determination of maximum cruise velocity, $v_c$, for safe navigation in the environment for $f_{max}$, $d_{p,max}$, sensor/communication range, $r_s$ and $r_a$, and sensor/communications update rate $\Delta T_s$ and $\Delta T_a$.
%\end{property}

\subsection{Algorithm Assumptions}
\begin{assumption}
\label{AssumptionPlanar}
	Vehicle desired trajectories and obstacle motions are planar, but vehicle dynamics are not restricted to be planar.
	\end{assumption}
	\begin{assumption}
	\label{AssumpVehicles}
	 Vehicles are finite in number and heterogeneous in physical parameters (mass, max thrust, etc.) and importance (e.g. higher valued asset).  %For example, a more valuable vehicle could be set to have a larger clearance radius $r_c$ to result in a more conservative trajectory.
	\end{assumption}
	%\item Vehicle orientation is indepedent of vehicle course. 
	\begin{assumption}
	 \label{AssumpVehCommsInfo} Vehicles share current position and course information when in range via wireless communication.
	\end{assumption} %: $[\mathbf{p}_j,~\mathbf{\dot{p}}_j,~d_{turn},~ID,~r_c]$ where $d_{turn}$ and $ID$ are related to vehicle maneuvering (see Sec.~\ref{SubSecVehManeuver}).
	\begin{assumption}
	 \label{AssumpSensorComms} Vehicles sensor and communication sample periods and ranges are equal and given by $\Delta T_s$ and $r_s > r_c$, respectively. Within these limitations, the sensor and inter-vehicle communications provide perfect distance and velocity information. 
	\end{assumption}
	\begin{assumption}
	  The clearance radius $r_c$ ensures there are no aerodynamic interactions between one vehicle and another or with obstacles. 
	\end{assumption}% effects. It is recognized that close vehicle interactions could adversely affect the vehicle dynamics \cite{Powers2013}. 
	\begin{assumption}
	 Wind disturbances are bounded, time-varying, and planar. Updraft effects near obstacles are assumed to be limited to a distance less than $r_c$.
	\end{assumption}
	%\item Obstacle positioning does not violate vehicle clearance radius for safe passage
	\begin{assumption}
	\label{AssumpLessCapable}
	There are a finite number of obstacles and each obstacle is finite size, moves with constant velocity (less than minimum vehicle cruise velocity) and constant course. Minimum obstacle separation does not prevent the vehicles from moving between them.
	\end{assumption}
	%\item Vehicle estimates obstacle velocity and direction information 	
	%\begin{assumption}
	%Goal positions can be time-varying and are directed by a separate algorithm
	%\end{assumption}
	\begin{assumption}
	\label{AssumpGoalPos}
	 Goal positions are not too close to obstacles or each other to violate vehicle clearance radii and are not infinitely far from the coordinate origin.
	\end{assumption}

\begin{figure}
	\centering
		\includegraphics[width=2.95in]{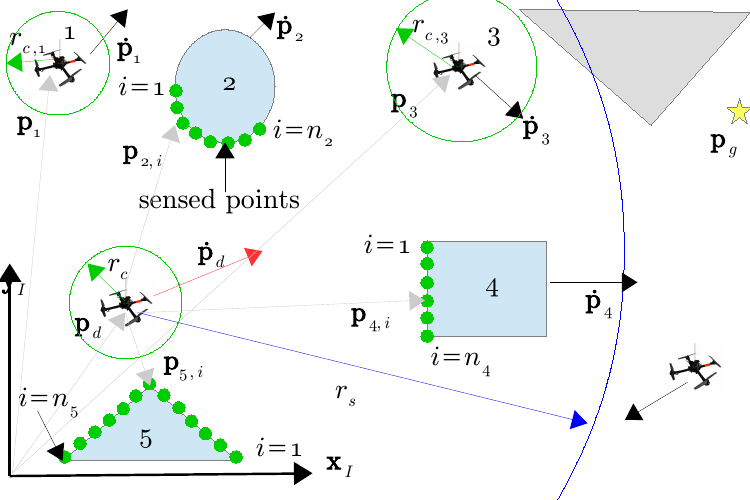}
		\caption{\label{FigDetObsInPath} Representative environment where a vehicle navigates around other vehicles and obstacles to reach a goal position (yellow star). Potential collisions are prioritized based on distance and course angle.}
\end{figure}

\section{Trajectory Generation}
\label{SecTrajGen}
The trajectory generation algorithm takes each vehicle from its starting position and velocity and guides it on a collision-free trajectory to the goal position. To achieve this, the vehicle first determines which vehicles in $r_s$ it is responsible for maneuvering around. Next, if the vehicle has thrust availability for maneuvering, it compiles all sensor/communication inputs to identify the most imminent obstacle/vehicle safety threat. The vehicle then computes a circumnavigation direction to traverse the obstacle/vehicle, a course change angle, and a velocity change to maintain the desired clearance radius, $r_c$. Finally, the vehicle uses sigmoid functions to smoothly transition to the desired course and velocity. These steps are discussed in detail in Secs.~\ref{SubSecVehManeuver} to \ref{SecSigmoid}.

\subsection{Ranking Vehicles' Maneuverability}
\label{SubSecVehManeuver}
To determine which vehicles maneuver and which vehicles stay on course, vehicles exchange their maximum cruise velocity, $v_c$, current velocity, $\mathbf{\dot{p}}_d$, clearance radius, $r_c$, and a pre-assigned $ID$ value when they come within communication range of each other. To satisfy Assumption \ref{AssumpLessCapable}, vehicles with larger $v_c$ must maneuver around vehicles with smaller $v_c$. If the vehicles have equal $v_c$ values, then the vehicles with lower $ID$ values maneuver around vehicles with higher $ID$ values, forming the set $\mathcal{I}_{mnvr} \subseteq \mathcal{I}_{nr}$, where $\mathcal{I}_{nr}$ is the set of all vehicles within $r_s$ of the vehicle's current position. Hovering or loitering vehicles are considered to have $v_c = 0$.

\subsection{Compiling Sensor Inputs}
\label{SubSecObsVehIdentify}
The vehicle uses distance and angle measurements to obstacles and other vehicles to determine the most imminent collisions, if any. We assume that the sensing is isotropic (i.e. has the same range and rate in all directions). The sensor output is a data array of relative positions of sensed points on obstacles. By finding discontinuities in range and angle, the sensor scan information is used to distinguish different obstacles, each of which is given a unique local identifier, $id$. The $id$ values of all obstacles within the sensor scan comprise the set $\mathcal{I}_{obs} = \left\{id_1,...,id_m \right\}$, where $m$ is the number of distinct obstacles within range. The inertial positions of the sensed points are given by $\mathbf{p}_{id,i}$, where $i = 1,...,n_{id}$, and $n_{id}$ is the number of sensed points for that particular obstacle.

The inter-vehicle communication provides inertial positions, $\mathbf{p}_{ID}$, in addition to the data described in Sec.~\ref{SubSecVehManeuver}. The data for the vehicles in $\mathcal{I}_{mnvr}$ is combined with the data for the obstacles in $\mathcal{I}_{obs}$ to form a data array of distinct vehicles and obstacles that is used to determine appropriate course and/or velocity changes for collision-free navigation in the environment.

\subsection{Critical Obstacle and Vehicle Identification}
\label{SecCriticalObsVehID} 
Now that the vehicle has compiled its sensor and communication inputs, it identifies critical and non-critical obstacles/vehicles in the environment. Critical obstacles/vehicles are within the minimum reaction distance (defined in Eq.~\ref{EqRcStar}), require immediate action from the vehicle to avoid collisions and violations of $r_c$, and if there are multiple critical obstacles/vehicles then all contribute to the course change. Non-critical obstacles are outside the minimum reaction distance and contribute to course changes when there are no critical obstacles or the critical obstacles do not prohibit the vehicle from navigating to the goal position. This section describes the process to determine the sets $\mathcal{I}_{co}$, $\mathcal{I}_{cv}$, $\mathcal{I}_{no}$, and $\mathcal{I}_{nv}$, which are the sets of critical obstacles and vehicles and non-critical obstacles and vehicles, respectively.

The vehicle first determines the closest sensed point for the $k^{th}$ obstacle/vehicle in $\mathcal{I}_{obs} \cap \mathcal{I}_{nr}$
\begin{equation}
	\label{EqPMin}
	\mathbf{p}_{k,min} = \underset{i}{\mathrm{argmin}}\left(||\mathbf{p}_{k,i} - \mathbf{p}_d||\right)
\end{equation}
\noindent where $i = 1,...,n_{k}$ are the indices of the sensed points for obstacle/vehicle $k$. % $n_1,...,n_n$ from Eq.~\ref{EqOneMatrixO}.

Since the other vehicles can change course and speed whereas the obstacles have constant course and speed, the minimum reaction distance to maintain $r_c$ and avoid collisions is different for obstacles and vehicles. We define the minimum reaction distance to avoid obstacle/vehicle $k$ as
\begin{align}
	\label{EqRcStar}
	r_{c,k}^* =  \left\{\begin{array}{ll}
	r_{obs}, & k \in \mathcal{I}_{obs} \\
	\mathrm{max}(r_{c,k},r_c) + r_{180} + ||\mathbf{\dot{p}}_{k}|| \Delta T_{tot}, & k \in \mathcal{I}_{nr}
	\end{array}
	\right.
\end{align}
\noindent where $r_{obs}$ is the minimum distance between obstacles, $r_c$ is the clearance radius of the current vehicle, $r_{c,k}$ is the clearance radius of sensed vehicle $k$, $\Delta T_{tot}=\tau_{f,180} + 2\Delta T_s + \Delta T_c$, $\mathbf{\dot{p}}_k$ is the velocity vector of vehicle $k$, $r_{180} = v_c \int_0^{\tau_{f,180}} \sin \phi(t) dt$ and $\tau_{f,180} = c_3/a_{max} \pi v_c$ are the distance and time span required for the vehicle to make a 180\degree~turn ($v_c$ is defined in Theorem 2, $\phi(t),c_3,a_{max}$ are defined in Sec.~\ref{SecSigmoid}), and $\Delta T_c < \Delta T_s$ is the maximum on-board algorithm computation time. Appendix C discusses the development of the definition of $r_{c,k}^*$. 

To accurately determine which obstacle/vehicle poses the most imminent threat, we normalize the distance from the vehicle to $\mathbf{p}_{k,min}$ by the corresponding minimum reaction distance, $r_{c,k}^*$. The obstacle/vehicle that minimizes Eq.~\ref{EqPMinGlobal} is the most imminent threat:
\begin{equation}
	\mathbf{p}_{min} = \mathbf{p}_{k_{min},min}
\end{equation} 
\noindent where
\begin{equation}
	\label{EqPMinGlobal}
	k_{min}= \underset{k}{\mathrm{argmin}}\left(\frac{||\mathbf{p}_{k,min}-\mathbf{p}_d||-r_{c,k}^*}{r_{c,k}^*}\right)
\end{equation}

Since $r_c < r_{c,k}^*$ for all cases, $||\mathbf{p}_{k,min} - \mathbf{p}_d||-r_{c,k}^*$ can be negative and likely is negative while traversing an obstacle/vehicle. When it is negative, the obstacle/vehicle that the vehicle is traversing is defined as critical. The sets of critical obstacles and vehicles are respectively defined as
\begin{align}
	\mathcal{I}_{co} &= \left\{k| ~||\mathbf{p}_{k,min}-\mathbf{p}_d||<r_{c,k}^*, k \in \mathcal{I}_{obs} \right\} \\
	\mathcal{I}_{cv}& = \left\{k| ~||\mathbf{p}_{k,min}-\mathbf{p}_d||<r_{c,k}^*, k \in \mathcal{I}_{nr} \right\} 	
\end{align}

The non-critical sets of obstacles and vehicles are, $\mathcal{I}_{no} = \mathcal{I}_{obs} - \mathcal{I}_{co}$ and $\mathcal{I}_{nv} = \mathcal{I}_{nr} - \mathcal{I}_{cv}$, respectively. These sets are used for determining appropriate course changes as discussed in Sec.~\ref{SecCourseChange}.

\subsection{Course Change Definition for an Obstacle}
\label{SecCourseChangeOneObs}
To safely navigate the environment and avoid collisions, the vehicle can change course and/or velocity. In this study, we assume that the vehicle travels at its maximum safe cruise velocity and makes course changes as the default behavior to try to minimize the time required to reach the goal position. The process for determining an appropriate course change applies to both critical and non-critical obstacles. This section describes the process to determine a course change angle, $\Delta \phi_k$, and the set of all feasible course angles, $\mathbf{O}_{k}$, for obstacle $k$. There are a few differences in the process for obstacles compared to vehicles, so vehicles are discussed separately in Sec.~\ref{SecCourseChangeOneVeh}.

To start the process of determining a candidate course change, the vehicle takes the sensed points for each obstacle and determines the bounding extent points (i.e.~the left- and right-most extent points), $\mathbf{p}_{k,e1}$ and $\mathbf{p}_{k,e2}$, and their corresponding projected extent points, $\mathbf{p}_{k,e1}^*$ and $\mathbf{p}_{k,e2}^*$ which take into account $r_c$. This process is illustrated in Fig.~\ref{FigPadObsByRC} where the vehicle first computes the angle to the projected sensed points as follows: 
\begin{equation}
	\label{EqPhiEi}
	\phi_{e,i} = \phi_{e1,i} + \phi_{e2,i}
\end{equation}
\noindent where
\begin{align}	
	&\phi_{e1,i} = \mathrm{angle}(\mathbf{r}_{k,min},\mathbf{r}_{k,i}) \\
	\label{EqPhiE2PadObs}
	&\phi_{e2,i} = k_{\phi,e1,i}\left(\phi_{e1,i}\right)\sin^{-1}\left(\frac{r_c}{||\mathbf{r}_{k,i}||}\right) \\
	&\mathbf{r}_{k,i} = \mathbf{p}_{k,i}-\mathbf{p}_d \\
	&k_{\phi,e1,i} = \left\{\begin{array}{ll}
	\mathrm{sgn}(\phi_{e1,i}), & |\phi_{e1,i}| > 0 \\
	\mathrm{sgn}(\mathrm{angle}(\mathbf{r}_{k,min},\mathbf{\dot{p}}_d)), & \phi_{e1,i} = 0
	\end{array}
	\right. \\
	\label{EqPkiStar}
	&\mathbf{p}_{k,i}^* = \mathbf{p}_d + \sqrt{||\mathbf{r}_{k,i}||^2 - r_c^2}\mathbf{R}_{\phi_{e,i}}\hat{\mathbf{r}}_{k,min}
\end{align}
\noindent where $i = 1, ..., n_{k}$ is the index of sensed points for obstacle $k$. Note Eq.~\ref{EqPhiE2PadObs} only produces a real result when $||\mathbf{r}_{k,i}|| \geq r_c$; however, Sec.~\ref{SecSafetyGuarantee} guarantees this condition.

\blfootnote{1.~$\mathbf{\hat{x}}=\mathbf{x}/||\mathbf{x}||$ for any vector $\mathbf{x}$}
\blfootnote{2.~$\mathbf{R}_{\theta}$ is the rotation matrix to make a $\theta$ rotation about $\mathbf{z}_I$}
\blfootnote{3.~$\mathrm{angle}(\mathbf{p}_1,\mathbf{p}_2)$ returns the signed angle from $\mathbf{p}_1$ to $\mathbf{p}_2$.}

The bounding extent points are the points that produce the maximum and minimum $\phi_{e,i}$ as shown in Fig.~\ref{FigPadObsByRC}B and defined as
\begin{align}
	\label{EqPE1}
	&\mathbf{p}_{k,e1} = \underset{i}{\mathrm{argmax}}\left(\phi_{e,i}\right) \\
	\label{EqPE2}	
	&\mathbf{p}_{k,e2} = \underset{i}{\mathrm{argmin}}\left(\phi_{e,i}\right)
\end{align}

The final point the vehicle calculates is the projected minimum point as shown in Fig.~\ref{FigPadObsByRC}B and defined as:
\begin{equation}
	\mathbf{p}_{k,min}^* = \mathbf{p}_{k,min} + r_c \left(-\hat{\mathbf{r}}_{k,min}\right)
\end{equation}

If there is only one sensed point for an obstacle, then the extent points and projected extent points are equal as shown in Fig.~\ref{FigPadObsByRC}C and defined as
\begin{align}
	\label{EqPke1SinglePt}
	\mathbf{p}_{k,e1} =\mathbf{p}_{k,e1}^*=\mathbf{p}_{k,1} + r_c \mathbf{R}_{+\phi_{h}} \left(-\hat{\mathbf{r}}_{k,1}\right) \\
	\label{EqPke2SinglePt}
	\mathbf{p}_{k,e2} =\mathbf{p}_{k,e2}^*=\mathbf{p}_{k,1} + r_c \mathbf{R}_{-\phi_{h}} \left(-\hat{\mathbf{r}}_{k,1}\right)
\end{align}
\noindent where
\begin{equation}
	\label{EqPhiHObs}
	\phi_h = \cos^{-1} \left(\frac{r_c}{||\mathbf{r}_{k,1}||}\right)
\end{equation}
\begin{figure}
	\centering
		\includegraphics[width=2.95in]{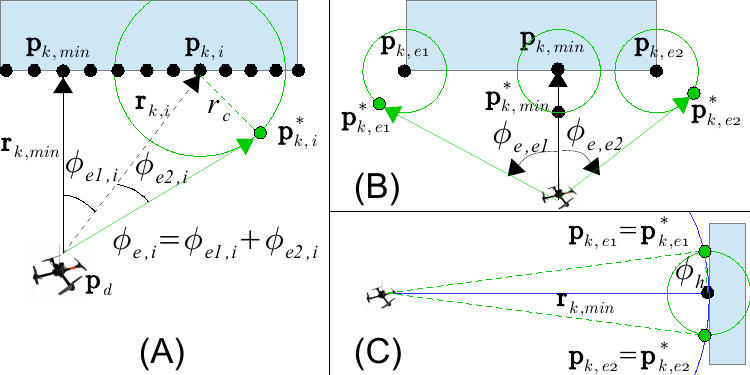}
		\caption{\label{FigPadObsByRC} (A) Determination of $\phi_{e,i}$ for each sensed point. (B) Determination of the bounding extent and projected extent points from Eqs.~\ref{EqPE1} and \ref{EqPE2}. (C) Determination of extent and projected extent points for a single sensed point.}
\end{figure}
Next, the vehicle uses the projected extent points to determine four candidate tangent directions per obstacle. The candidate tangent directions are shown in Fig.~\ref{FigFeasibleSet}A and summarized as
\begin{align}
	\label{EqPljS1}
	\mathbf{p}_{k,s1} &= \mathbf{p}_{k,e1}^* - \mathbf{p}_{k,min}^* \\
	\mathbf{p}_{k,s2} &= \mathbf{p}_{k,e1}^* - \mathbf{p}_d \\
	\mathbf{p}_{k,s3} &= \mathbf{p}_{k,e2}^* - \mathbf{p}_{k,min}^* \\
	\label{EqPljS4}
	\mathbf{p}_{k,s4} &= \mathbf{p}_{k,e2}^* - \mathbf{p}_d
\end{align}
The $\mathbf{p}_{k,s1}$ and $\mathbf{p}_{k,s3}$ tangent directions are ``conservative" because they define the slope based on the projected minimum point, thus keeping the vehicle parallel with the estimated obstacle ``face". The $\mathbf{p}_{k,s2}$ and $\mathbf{p}_{k,s4}$ tangent directions are ``aggressive" because they allow the vehicle to get closer to the obstacle by heading towards the tangent point on the $r_c$ circle. 

We use the tangent directions to determine the obstacle velocity components parallel to the tangent directions (i.e.~along the obstacle ``face"), $\mathbf{p}_{k,\parallel,i}$, and perpendicular to the tangent directions (i.e.~normal to the obstacle ``face"), $\mathbf{p}_{k,\perp,i}$ for $i=1,\ldots,4$. The vehicle must match the component of velocity in the $\mathbf{p}_{k,\perp,i}$ direction to avoid collisions, then uses any remaining velocity to traverse the obstacle in the $\mathbf{p}_{k,\parallel,i}$ direction. We define these quantities in the following paragraphs, where Eqs.~\ref{EqPpar} to \ref{Eqzt} are evaluated for all four candidate tangent directions, but for brevity, only the $\mathbf{p}_{k,s1}$ equations are presented. 

The unit vectors parallel and perpendicular to the tangent direction and the corresponding obstacle velocity components are respectively given by
\begin{align}
	\label{EqPpar}
	\mathbf{p}_{k,\parallel,s1} &= \hat{\mathbf{p}}_{k,s1} \\
	\label{EqPperp}
	\mathbf{p}_{k,\perp,s1} &= R_{\theta_{k,s1}} \hat{\mathbf{p}}_{k,s1} \\
	\label{EqVParll}
	v_{k,\parallel,s1} &= \mathbf{\dot{p}}_{k} \cdot \mathbf{p}_{k,\parallel,s1}\\
	\label{EqVperp}
	v_{k,\perp,s1} &= \mathbf{\dot{p}}_{k} \cdot \mathbf{p}_{k,\perp,s1}
\end{align}
\noindent where
\begin{equation*}
\theta_{k,s1} = \mathrm{sgn}\left(\left(\mathbf{p}_{k,s1} \times \left(\mathbf{p}_d - \mathbf{p}_{k,min}\right)\right)\cdot \mathbf{z}_I\right) \frac{\pi}{2}
\end{equation*}
To avoid collisions, the navigating vehicle matches at minimum, the obstacle velocity component in the $\mathbf{p}_{k,\perp,s1}$ direction, so that Eq.~\ref{EqMatchOrthoComponent} is satisfied. The remaining velocity magnitude, defined in Eq.~\ref{EqVrem}, is available to traverse the obstacle:
\begin{align}
	\label{EqMatchOrthoComponent}
	&\mathbf{\dot{p}}_d \cdot \mathbf{p}_{k,\perp,s1} \geq v_{k,\perp,s1} \\
	\label{EqVrem}
	&v_{k,rem,s1} = \sqrt{||\mathbf{\dot{p}}_d||^2 - v_{k,\perp,s1}^2}
\end{align}
\noindent where $v_{k,rem,s1} > 0$ since Assumption \ref{AssumpLessCapable} guarantees
\begin{equation}
	||\mathbf{\dot{p}}_d|| > ||\mathbf{\dot{p}}_{k}|| \geq v_{k,\perp,s1}
\end{equation} 
Next, we define the desired velocity vector for the vehicle that matches the perpendicular component of the obstacle velocity and applies the remaining velocity to traverse the tangent direction as follows:
\begin{equation}
	\label{EqVfDef}
	\mathbf{v}_{k,s1} = v_{k,\perp,s1} \mathbf{p}_{k,\perp,s1} + v_{k,rem,s1} \mathbf{p}_{k,\parallel,s1}
\end{equation}
The corresponding course change to reach the desired velocity vector is
\begin{equation}
	\label{EqDelPhiPerTanDir}
	\Delta \phi_{k,s1} = \mathrm{angle}(\mathbf{\dot{p}}_d,\mathbf{v}_{k,s1})
\end{equation}

The circumnavigation direction corresponding to this tangent direction is defined by
\begin{equation}
	\label{EqCircumDir}
	\mathbf{z}_{k,s1} = -\mathrm{sgn}(z_{k,s1}) \mathbf{z}_I
\end{equation}
\noindent where
\begin{equation}
	\label{Eqzt}
	%z_{k,t,s1} = \left(\mathbf{p}_{k,s1} \times \left(\mathbf{p}_d - \mathbf{p}_{k,min} \right)\right) \cdot \mathbf{z}_I
	z_{k,s1} = \left(\left(\mathbf{p}_{k,min} - \mathbf{p}_d \right) \times \mathbf{p}_{k,s1}\right) \cdot \mathbf{z}_I
\end{equation}
%\begin{spacing}{1.0}
The circumnavigation direction is in the $+\mathbf{z}_I$ direction if the vehicle traverses counterclockwise around the obstacle and is in the $-\mathbf{z}_I$ direction otherwise. The circumnavigation direction in Eq.~\ref{EqCircumDir} and the candidate course change in Eq.~\ref{EqDelPhiPerTanDir} define the minimum absolute course angle, but this is not the only feasible course angle. The feasible course angles for tangent direction $\mathbf{p}_{k,s1}$ include any angle between $\Delta \phi_{k,s1}$ and the angle to the aggressive tangent direction for the opposite extent point, $\Delta \phi_{k,s4}$, as shown in Fig.~\ref{FigFeasibleSet}. We define this generically as
\begin{align}
	\label{EqSetOrt}
	\mathbf{O}_{k,si}' =  \left\{\begin{array}{ll}
	\left\{\Delta \phi| \Delta \phi_{k,sj}' \leq \Delta \phi < \Delta \phi_{k,si}\right\},& \mathbf{z}_{k,si} = \mathbf{z}_I \\
	\left\{\Delta \phi|\Delta \phi_{k,si} < \Delta \phi \leq \Delta \phi_{k,sj}'\right\}, & \mathbf{z}_{k,si} = -\mathbf{z}_I
	\end{array}
	\right. 
\end{align}
\noindent where
\begin{align*}
	\label{EqPhiKSprime}
	\Delta \phi_{k,sj}' =  \left\{\begin{array}{ll}
 \Delta \phi_{k,s4}, & z_{k,si}\Delta \phi_{k,s4}>0,i=1,2 \\
	\mathrm{sgn}\left(z_{k,si}\right) (2\pi - |\Delta \phi_{k,s4}|), & z_{k,si}\Delta \phi_{k,s4}<0,i=1,2 \\
	\Delta \phi_{k,s2}, & z_{k,si}\Delta \phi_{k,s2}>0,i=3,4 \\
	\mathrm{sgn}\left(z_{k,si}\right) (2\pi - |\Delta \phi_{k,s2}|), &z_{k,si}\Delta \phi_{k,s2}<0, i=3,4
	\end{array}
	\right.
\end{align*}
\noindent The set defined in Eq.~\ref{EqSetOrt} is further refined in Eq.~\ref{EqSetOrtRefined} once a circumnavigation direction is chosen.

\begin{figure}
	\begin{center}
		\includegraphics[width=2.8in]{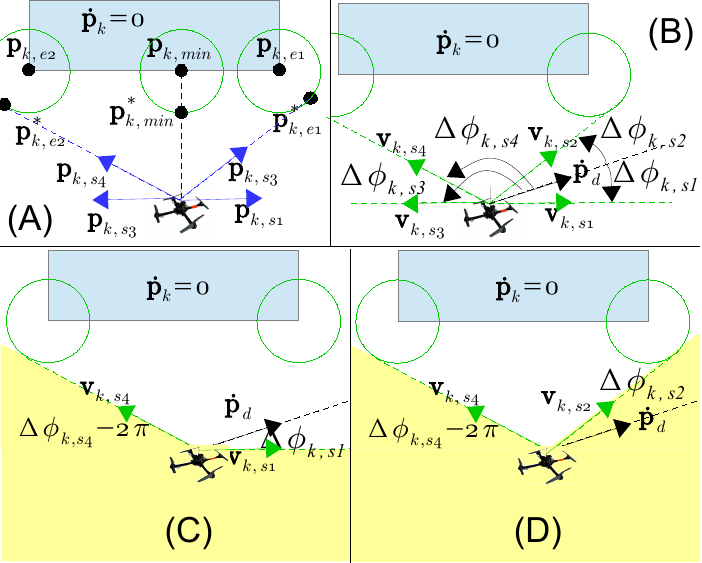}
		\caption{\label{FigFeasibleSet}(A) Tangent direction definitions for a vehicle traversing a stationary obstacle. (B) Example of four candidate course changes (for each tangent direction) for a stationary obstacle. The analysis applies to stationary and moving obstacles but is easier to visualize for a stationary obstacle. (C) The feasible set $\mathbf{O}_{k,s1} = \left\{\Delta \phi| \Delta \phi_{k,s4}-2\pi \leq \Delta \phi \leq \Delta \phi_{k,s1}\right\}$ is shaded in yellow for the candidate course change $\Delta \phi_{k,s1}$. (D) The feasible set $\mathbf{O}_{k,s2} = \left\{\Delta \phi| \Delta \phi_{k,s4}-2\pi \leq \Delta \phi \leq \Delta \phi_{k,s2}\right\}$ is shaded in yellow for the candidate course change $\Delta \phi_{k,s2}$. The sets are similar for candidate course changes $\Delta \phi_{k,s3}$ and $\Delta \phi_{k,s4}$.}
	\end{center}
\end{figure}

At this point, the vehicle has four candidate final velocity vectors defined in Eq.~\ref{EqVfDef} and must choose among these four. The vehicle can traverse the obstacle towards $\mathbf{p}_{k,e1}$ or $\mathbf{p}_{k,e2}$, where the $\mathbf{v}_{k,s1}$ and $\mathbf{v}_{k,s2}$ candidates are associated with $\mathbf{p}_{k,e1}$, and the $\mathbf{v}_{k,s3}$ and $\mathbf{v}_{k,s4}$ candidates are associated with $\mathbf{p}_{k,e2}$. 

The vehicle's objective is to reach the goal position where the course change to the goal position is given by
\begin{equation}
	\label{EqGoalPosDelPhi}
			\Delta \phi_g = \mathrm{angle}\left(\mathbf{\dot{p}}_d,\mathbf{p}_g - \mathbf{p}_d\right) 
\end{equation}
To determine if changing course to the goal position is feasible, the vehicle first considers if the obstacle's circumnavigation direction has been established. If it has, the vehicle must choose the extent point consistent with the established direction to keep the vehicle circling the obstacle in the same direction. The circumnavigation direction is established if the obstacle was previously the active obstacle, where the active obstacle is the closest obstacle that requires the vehicle to make a heading or velocity change. This is formally defined in Sec.~\ref{SecCourseChange}.

If the circumnavigation direction has not been established and $\Delta \phi_g \in \mathbf{O}_{k,si}'$ for $i=1,...,4$, then the vehicle has sufficiently traversed obstacle $k$ such that the goal position is feasible for one of the tangent directions. The extent point associated with this tangent direction is chosen and $\Delta \phi_k = \Delta \phi_g$. 

If the circumnavigation direction has not been chosen and $\Delta \phi_g \notin \mathbf{O}_{k,si}'$ for $i=1,...,4$, then the vehicle uses Eqs.~\ref{EqTtv} and \ref{EqTtv2}, based on current sensor information, to estimate how long it would take to traverse the obstacle towards the extent points: 
\begin{align}
	\label{EqTtv}
	t_{k,e1} &= \frac{||\mathbf{p}_{k,e1}-\mathbf{p}_{k,min}||}{v_{k,rem,s1} - v_{k,\parallel,s1}} \\
	\label{EqTtv2}
	t_{k,e2} &= \frac{||\mathbf{p}_{k,e2}-\mathbf{p}_{k,min}||}{v_{k,rem,s3} - v_{k,\parallel,s3}}
\end{align}
\noindent where $t_{k,e1}$ corresponds with tangent directions $\mathbf{p}_{k,s1}$ and $\mathbf{p}_{k,s2}$, and $t_{k,e2}$ corresponds with tangent directions $\mathbf{p}_{k,s3}$ and $\mathbf{p}_{k,s4}$. The vehicle chooses the quicker direction even if it turns out not to be the quickest path after more of the obstacle is sensed.

The process for choosing the extent point for obstacles is summarized by Eq.~\ref{EqPeF}. The conditions in Eq.~\ref{EqPeF} are evaluated in sequence until one of the conditions is met: 
\begin{equation}
	\label{EqPeF}
	\mathbf{p}_{k,E} = \left\{\begin{array}{ll}
	\mathbf{p}_{k,e1}, & \mathbf{z}_{k,s1} = \mathbf{z}_{k,t}, \mathbf{z}_{k,t} \neq 0 \\
	\mathbf{p}_{k,e2}, & \mathbf{z}_{k,s3} = \mathbf{z}_{k,t}, \mathbf{z}_{k,t} \neq 0 \\
	\mathbf{p}_{k,e1}, & \Delta \phi_g \in \mathbf{O}_{k,si}',~i=1,2 \\
	\mathbf{p}_{k,e2}, & \Delta \phi_g \in \mathbf{O}_{k,si}',~i=3,4 \\
	\mathbf{p}_{k,e1}, & t_{k,e1} = t_{k,e2}, \frac{|\Delta \phi_{s1} -\Delta \phi_g|}{|\Delta \phi_{s3} - \Delta \phi_g|}\leq 1 \\
	\mathbf{p}_{k,e2}, & t_{k,e1} = t_{k,e2}, \frac{|\Delta \phi_{s1} -\Delta \phi_g|}{|\Delta \phi_{s3} - \Delta \phi_g|} > 1 \\
	\mathbf{p}_{k,e1}, & t_{k,e1} <  t_{k,e2}  \\
	\mathbf{p}_{k,e2}, & \mathrm{otherwise}
	\end{array}
	\right.
\end{equation}
\noindent where $\mathbf{z}_{k,t}$ is the established circumnavigation direction (Eq.~\ref{EqCircumDir}) from when obstacle $k$ was the active obstacle. If the circumnavigation direction has not been established, then $\mathbf{z}_{k,t}= 0$.
%\begin{spacing}{1.2}
Now that $\mathbf{p}_{k,E}$ and the circumnavigation direction have been established, two of the four candidate final velocity vectors have been eliminated. Next, the conservative or aggressive tangent direction must be selected. Since the vehicle cannot change course instantaneously, if the aggressive tangent direction solution is in the opposite direction as the circumnavigation direction (i.e.~the vehicle maneuvering takes the vehicle closer to the obstacle before achieving the desired course change), then the aggressive tangent direction is not suitable because it will violate $r_c$. If instead the tangent direction solution is in the same direction as the circumnavigation direction, then the aggressive tangent direction is suitable. Equation \ref{EqAggresConserv} summarizes this:

\begin{align}
	\label{EqAggresConserv}
	S = \left\{\begin{array}{ll}
	s2, & \mathrm{sgn}(\mathrm{angle}(\mathbf{\dot{p}}_d,\mathbf{v}_{k,s2})) = \mathrm{sgn}(z_{k,s1}), \mathbf{p}_{k,E} = \mathbf{p}_{k,e1} \\
	s1, & \mathrm{sgn}(\mathrm{angle}(\mathbf{\dot{p}}_d,\mathbf{v}_{k,s2})) \neq \mathrm{sgn}(z_{k,s1}), \mathbf{p}_{k,E} = \mathbf{p}_{k,e1} \\
	s4, & \mathrm{sgn}(\mathrm{angle}(\mathbf{\dot{p}}_d,\mathbf{v}_{k,s4})) = \mathrm{sgn}(z_{k,s3}), \mathbf{p}_{k,E} = \mathbf{p}_{k,e2} \\
	s3, & \mathrm{sgn}(\mathrm{angle}(\mathbf{\dot{p}}_d,\mathbf{v}_{k,s4})) \neq \mathrm{sgn}(z_{k,s3}), \mathbf{p}_{k,E} = \mathbf{p}_{k,e2}
	\end{array}
	\right.
\end{align}
%\end{spacing}
Now that a tangent direction has been chosen, we update $\mathbf{O}_{k,S}'$ to $\mathbf{O}_{k}$ to take into account further restrictions if the vehicle is traversing a non-convex obstacle. In this case, if the full obstacle is not within the sensor range, $\mathbf{O}_{k,S}'$ may not restrict the vehicle maneuvering enough, and the vehicle may incorrectly conclude that a course change to the goal position is feasible. 

Instead, the vehicle stores the most constraining extent point, $\mathbf{p}_{k,ep}$, which is the more restrictive of either the previous most constraining extent point, $\mathbf{p}_{k,ep}^{-}$, or the non-chosen extent point from most recent sensor information as follows:
\begin{equation}
	\label{EqPkep}
	\mathbf{p}_{k,ep} = \left\{\begin{array}{ll}
	\mathbf{p}_{k,ej}, & \mathbf{z}_{k,t} = 0 \\
	\mathbf{p}_{k,ej}, & \mathbf{z}_{k,t} = \mathbf{z}_I, \Delta \phi_{k,S'} > \Delta \phi_{k,ep} \\
	\mathbf{p}_{k,ej}, & \mathbf{z}_{k,t} = -\mathbf{z}_I, \Delta \phi_{k,S'} < \Delta \phi_{k,ep} \\
	\mathbf{p}_{k,ep}^{-}, & \mathbf{z}_{k,t} = \mathbf{z}_I, \Delta \phi_{k,ep} \geq \Delta \phi_{k,S'} \\
	\mathbf{p}_{k,ep}^{-}, & \mathbf{z}_{k,t} = -\mathbf{z}_I, \Delta \phi_{k,ep} \leq \Delta \phi_{k,S'} 
	\end{array}
	\right.
\end{equation}
\noindent where
\begin{align*}
	&\Delta \phi_{k,ep} = \mathrm{angle}(\mathbf{\dot{p}}_d,\mathbf{p}_{k,ep}^{-*} - \mathbf{\dot{p}}_d) \\
	&j = \left\{\begin{array}{ll}
	1, & S = s3,s4 \\
	2, & S = s1, s2
	\end{array}
	\right.\\
	&S' = \left\{\begin{array}{ll}
	s4, & S = s1,s2 \\
	s2, & S = s3,s4
	\end{array}
	\right.
\end{align*}
\noindent and $\mathbf{p}_{k,ep}^{-*}$ is calculated from Eq.~\ref{EqPkiStar}.
%include this in thesis version
%\begin{figure}
%	\begin{center}
%		\includegraphics[width=2.95in]{FeasibleSet_NonConvex}
%		\caption{\label{FigFeasibleSetNonConvex}{\color{red}(A) At time $t=t_n$ the vehicle establishes a course change angle $\Delta \phi_{k,s2}$ and set of feasible course angles $\mathbf{O}_{k,s2}' = \left\{\Delta \phi| \Delta \phi_{k,s2} \leq \Delta \phi \leq \Delta \phi_{k,s4} \right\}$, as highlighted in yellow. (B) At time $t > t_n$ the vehicle has further traversed the obstacle and identified a new course change $\Delta \phi_{k,s2}$ and set of feasible course angles $\mathbf{O}_{k,s2}'$. The course change $\Delta \phi_{k,ep}$ associated with $\mathbf{p}_{k,ep} = \mathbf{p}_{k,ep}^{-}$ is more restrictive than $\Delta \phi_{k,e2}$ so $\mathbf{O}_{k,s2} = \left\{\Delta \phi| \Delta \phi_{k,s2} \leq \Delta \phi \leq \Delta \phi_{k,ep}\right\}$ and we remove the cross-hatched area. }}
%	\end{center}
%\end{figure}
%\begin{spacing}{1.2}

The heading change associated with the most constraining extent point is compared to the feasible course change angles in $\mathbf{O}_{k,S}'$ according to
\begin{align}
	\label{EqSetOrtRefined}
	&\mathbf{O}_{k} = \left\{\begin{array}{l}
	\left\{\Delta \phi| \mathrm{max}(\Delta \phi_{k,ep},\mathrm{min}(\mathbf{O}_{k,S}')) \leq \Delta \phi < \Delta \phi_{k,S}\right\},\mathbf{z}_{k,S} = \mathbf{z}_I \\
	\left\{\Delta \phi|\Delta \phi_{k,S} < \Delta \phi \leq  \mathrm{min}(\Delta \phi_{k,ep},\mathrm{max}(\mathbf{O}_{k,S}')) \right\},\mathbf{z}_{k,S} = -\mathbf{z}_I 
	\end{array}
	\right.
\end{align} 
%\end{spacing}
The final step is to define the course change angle for obstacle $k$ as 
\begin{equation}
	\label{EqDelPhiPerObs}
	\Delta \phi_k = \left\{\begin{array}{ll}
	\Delta \phi_g, & \Delta \phi_g \in \mathbf{O}_{k} \\
	\Delta \phi_{k,S}, & \mathrm{otherwise}
	\end{array}
	\right.
\end{equation}
\noindent The course change angle and set of all feasible course angles for obstacle $k$ are used to compare to other obstacles and vehicles to determine a final course change in Sec.~\ref{SecCourseChange}. Figure \ref{FigUpdatedProcess} shows this process for a circular obstacle.

\begin{figure}
	\centering
		\includegraphics[width=2.5in]{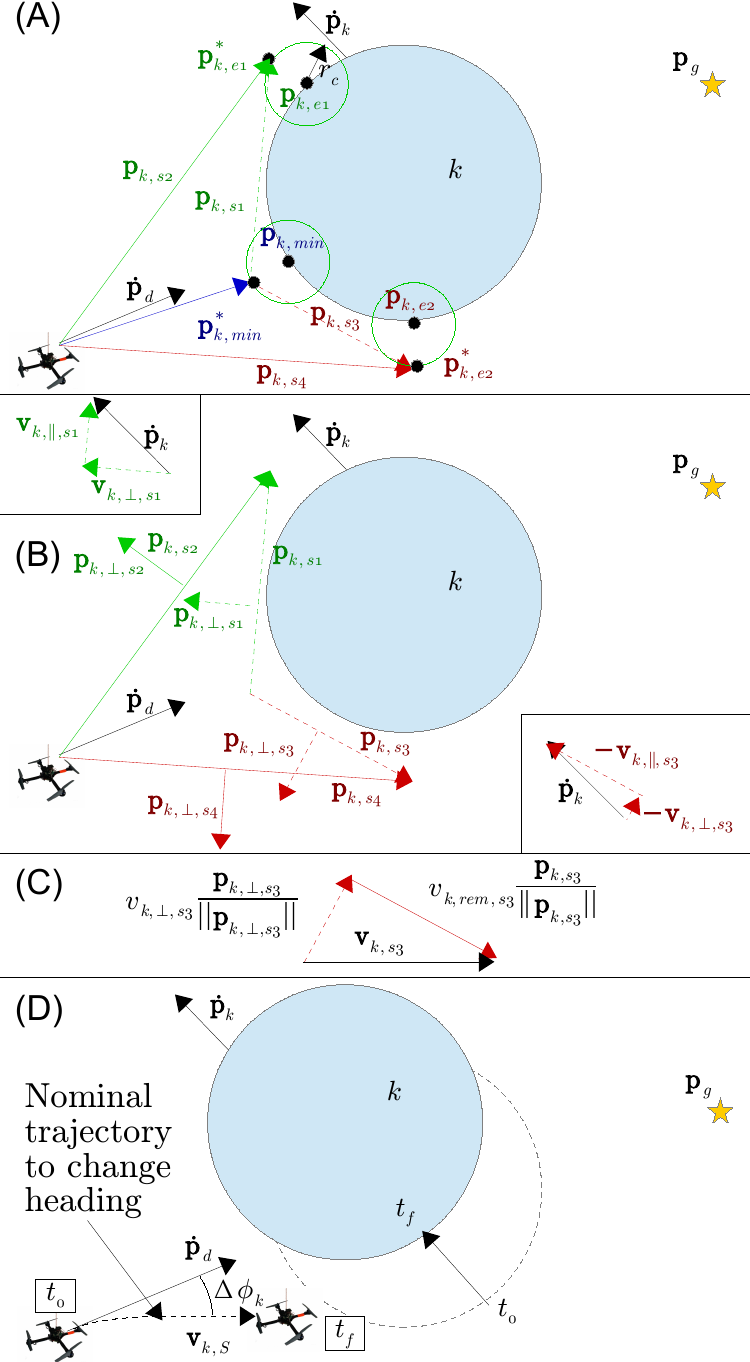}
		\caption{\label{FigUpdatedProcess} Process to determine course change for a moving obstacle, $k$. (A) The vehicle identifies the following points: minimum distance, $\mathbf{p}_{k,min}$, projected minimum distance, $\mathbf{p}_{k,min}^*$, extents, $\mathbf{p}_{k,e1}$ and $\mathbf{p}_{k,e2}$, projected extents, $\mathbf{p}_{k,e1}^*$ and $\mathbf{p}_{k,e2}^*$, and the four candidate tangent directions $\mathbf{p}_{k,si}$ for $i=1,...,4$. (B) The vehicle finds the components of the velocity $v_{k,\perp,si}$ and $v_{k,\parallel,si}$, $i=1,...,4$. The extents of the obstacle prevent navigation to $\mathbf{p}_g$ and since we assume $\mathbf{z}_{k,t}=0$, $\mathbf{p}_{k,e2}$ is chosen as the extent to head towards because it requires less time (Eq.~\ref{EqTtv}) to reach. (C) The desired final velocity is computed for the chosen extent direction. (D) The corresponding course change is implemented and a sample trajectory is shown with the vehicle and obstacle projected forward in time.}
\end{figure}

\subsection{Course Change Definition for a Vehicle}
\label{SecCourseChangeOneVeh}
The process to determine a course change angle and the set of all feasible course angles for a vehicle is very similar to the process described in Sec.~\ref{SecCourseChangeOneObs} for an obstacle. We distinguish critical and non-critical vehicles and also if there are critical obstacles present as there are some slight differences in the calculations.

For non-critical vehicles with critical, non-critical, or no obstacles, and critical vehicles with no critical obstacles, the process is the same as Sec.~\ref{SecCourseChangeOneObs}, except for three modifications: (1) the vehicle uses $r_{c,k}^*$ instead of $r_c$, (2) the vehicle uses Eqs.~\ref{EqPljS1Veh} to \ref{EqPljS4Veh} to define the tangent directions, and (3) since the vehicles are convex $\mathbf{O}_{k,si} = \mathbf{O}_{k,si}'$ for $i = 1,\dots,4$. Using these modifications, the course change angle, $\Delta \phi_k$, is solved from Eq.~\ref{EqDelPhiPerObs}, and the set of all feasible course changes, $\mathbf{O}_k$ is solved from Eq.~\ref{EqSetOrt} for $S$ defined by Eq.~\ref{EqAggresConserv}.

For critical vehicles with critical obstacles present, we include two additional modifications: (1) the vehicle does not fix its circumnavigation direction since the other vehicle can maneuver, and (2) because the circumnavigation is not fixed, it retains the desired course angles and the set of all feasible course angles for both extent points so there are two possible circumnavigation directions. This section describes the process to determine the course angles, $\Delta \phi_{k,\mathbf{z}_I}$ and $\Delta \phi_{k,-\mathbf{z}_I}$, and the corresponding sets of all feasible course angles, $\mathbf{O}_{k,\mathbf{z}_I}$ and $\mathbf{O}_{k,-\mathbf{z}_I}$, associated with each circumnavigation direction, $\mathbf{z}_{I}$ and $-\mathbf{z}_{I}$, for a critical vehicle $k$.

Similar to obstacles, the vehicle first determines the bounding extent and projected extent points to account for the minimum reaction distance, $r_{c,k}^*$. Since there is only a single sensed point (Fig.~\ref{FigPadObsByRC}C), Eqs.~\ref{EqPke1SinglePt} and \ref{EqPke2SinglePt} are used with $r_{c,k}^*$ replacing $r_c$, and we use Eq.~\ref{EqPhiHVeh} instead of Eq.~\ref{EqPhiHObs} for $\phi_h$ since $r_{c,k}^* > r_c$, and it is possible to be within $r_{c,k}^*$ of another vehicle (even though the vehicle should not remain there):
\begin{equation}
	\label{EqPhiHVeh}
	\phi_h = \left\{\begin{array}{ll}
	\cos^{-1} \left(\frac{r_{c,k}^*}{||\mathbf{r}_{k,1}||}\right), & r_{k,1} > r_{c,k}^* \\
	\cos^{-1} \left(\frac{||\mathbf{r}_{k,1}||}{r_{c,k}^*}\right), & r_{k,1} \leq r_{c,k}^*
	\end{array}
	\right.
\end{equation}

\begin{figure}
	\begin{center}
		\includegraphics[width=1.85in]{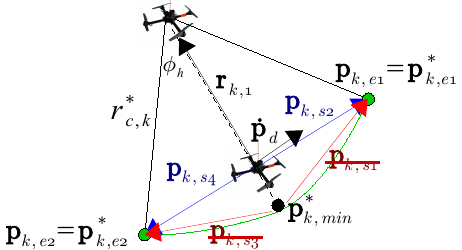}
		\caption{\label{FigPadVehByRC}Extent points and tangent directions for a vehicle within $r_{c,k}^*$ of another vehicle. The conservative tangent directions (red vectors) from  Eqs.~\ref{EqPljS1} to \ref{EqPljS4} are unacceptable because they will not take the vehicle outside $r_{c,k}^*$. Instead, the aggressive tangent directions must be used as defined in Eqs.~\ref{EqPljS1Veh} to \ref{EqPljS4Veh}.}
	\end{center}
\end{figure}
%\begin{spacing}{1.0}
The tangent direction definitions are modified from Eqs.~\ref{EqPljS1} to \ref{EqPljS4} to reduce the time that the vehicle stays within $r_{c,k}^*$. As shown in Fig.~\ref{FigPadVehByRC}, only the aggressive tangent direction is acceptable to navigate the vehicle out of $r_{c,k}^*$ as follows: %\end{spacing}
\begin{align}
	\label{EqPljS1Veh}
	\mathbf{p}_{k,s1} &= \left\{\begin{array}{ll}
						\mathbf{p}_{k,e1}^* - \mathbf{p}_{k,min}^*, & ||\mathbf{p}_{k}-\mathbf{p}_d|| \geq r_{c,k}^* \\
						\mathbf{p}_{k,e1}^* - \mathbf{p}_d, & ||\mathbf{p}_{k}-\mathbf{p}_d|| < r_{c,k}^*
						\end{array}
						\right. \\ 
	\mathbf{p}_{k,s2} &= \mathbf{p}_{k,e1}^* - \mathbf{p}_d \\
	\mathbf{p}_{k,s3} &= \left\{\begin{array}{ll}
						\mathbf{p}_{k,e2}^* - \mathbf{p}_{k,min}^*, &  ||\mathbf{p}_{k}-\mathbf{p}_d|| \geq r_{c,k}^*\\
						\mathbf{p}_{k,e2}^* - \mathbf{p}_d, & ||\mathbf{p}_{k}-\mathbf{p}_d|| < r_{c,k}^*
						\end{array}
						\right. \\
	\label{EqPljS4Veh}
	\mathbf{p}_{k,s4} &= \mathbf{p}_{k,e2}^* - \mathbf{p}_d
\end{align}
Next, the vehicle follows the same procedure to identify the velocity components, candidate course changes, and circumnavigation directions as defined in Eqs.~\ref{EqPpar} to \ref{Eqzt}. 

Instead of choosing an extent point for traversing the vehicle, we retain both $\mathbf{p}_{k,e1}$ and $\mathbf{p}_{k,e2}$ to allow flexibility for critical obstacles with fixed circumnavigation directions. For each extent point we select either the conservative or aggressive tangent direction from Eq.~\ref{EqAggresConserv}. 

As a result, we get two sets of all feasible course angles, $\mathbf{O}_{k,\mathbf{z}_I}$ and $\mathbf{O}_{k,-\mathbf{z}_I}$, one for each circumnavigation direction (Eq.~\ref{EqCircumDir}). The course changes (Eq.~\ref{EqDelPhiPerTanDir}) corresponding to each circumnavigation direction are $\Delta \phi_{k,\mathbf{z}_I}$ and $\Delta \phi_{k,-\mathbf{z}_I}$, respectively. 

The course change angles, $\Delta \phi_{k,\mathbf{z}_I}$ and $\Delta \phi_{k,-\mathbf{z}_I}$, and set of all feasible course angles, $\mathbf{O}_{k,\mathbf{z}_I}$ and $\mathbf{O}_{k,-\mathbf{z}_I}$, for vehicle $k$ are used to compare to other obstacles and vehicles to determine a final course change in Sec.~\ref{SecCourseChange}.

\subsection{Course Change Definition for Multiple Obstacles/Vehicles}
\label{SecCourseChange}
To determine the overall course change to safely navigate in the environment, the vehicle uses at minimum the course change angles for the critical obstacles and vehicles ($k\in\mathcal{I}_{co} \cup \mathcal{I}_{cv}$) and may also consider non-critical obstacles. This section describes the process for evaluating and combining the course change angles and sets of all feasible course angles for each obstacle/vehicle to determine a final course change, $\Delta \phi$, and identify the active obstacle.

%As we established in Sec.~\ref{SecCriticalObsVehID}, the critical obstacles/vehicles require immediate action and all critical obstacles/vehicles contribute to the course change. To initially combine the information, we keep the critical obstacles and vehicles separate. 

Starting with the critical obstacles, we take the course change definitions, $\Delta \phi_k$, from Eq.~\ref{EqDelPhiPerObs} and the feasible sets of course angles for obstacles, $\mathbf{O}_{k}$, from Eq.~\ref{EqSetOrtRefined} and combine them for all obstacles $k \in \mathcal{I}_{co}$ to form the set of candidate course changes $\mathbf{O}_{\Delta \phi,o}$ and the feasible set of all course changes $\mathbf{O}_o$, as defined in Eqs.~\ref{EqODelPhiO} and \ref{EqOo}, respectively: 
\begin{align}
	\label{EqODelPhiO}
	\mathbf{O}_{\Delta \phi,o}& = \left\{\Delta \phi_{k}|~k \in \mathcal{I}_{co}\right\} \\
	\label{EqOo}	
	\mathbf{O}_{o} &= \left\{\begin{array}{ll}
	\bigcap_{m=1}^j \mathbf{O}_{k_{m}}, & \begin{subarray}{l}
											\bigcap_{m=1}^{j} \mathbf{O}_{k_m} \neq \emptyset, \\
											\bigcap_{m=1}^{j+1} \mathbf{O}_{k_m} = \emptyset
											\end{subarray} \\
	\underset{k\in \mathcal{I}_{co}}{\bigcap} \mathbf{O}_{k}, & \mathrm{otherwise}
	\end{array}
	\right.
\end{align}
\noindent where the obstacles are evaluated from most to least critical where the most critical obstacle minimizes Eq.~\ref{EqPMinGlobal}. The $k_{j+1}$ obstacle is the first obstacle that produces an empty set of feasible course changes when $\mathbf{O}_{k_j}$ is intersected with all previous sets. By constantly navigating around the closest $j$ obstacles, the vehicle will eventually clear all obstacles as shown in Theorems 2 and 3.

%{\color{blue} If condition 1 of Eq.~\ref{EqOo} is satisfied, the vehicle temporarily ignores all but the closest obstacles up to $k_j \in \mathcal{I}_{co}$. The obstacle assumptions (Assumption \ref{AssumpLessCapable}) and cruise velocity bound (Theorem 2) guarantee that the vehicle reacts to and clears each obstacle in turn even when they are temporarily ignored.}

Similarly, if there are only critical vehicles (no critical obstacles), then we get $\mathbf{O}_{\Delta \phi,v}$ and $\mathbf{O}_v$ from Eqs.~\ref{EqODelPhiO} and \ref{EqOo}, respectively, for $k \in \mathcal{I}_{cv}$. 

Recall that when there are both critical obstacles and vehicles we retain both circumnavigation directions so the process is a little different to simplify the course changes and feasible sets. We start by bounding the course changes for each circumnavigation direction as
\begin{align}
	\Delta \phi_{v,\mathbf{z}_I} &=  \underset{k \in \mathcal{I}_{cv}}{\mathrm{min}}(\Delta \phi_{k,\mathbf{z}_I}) \\
	\Delta \phi_{v,-\mathbf{z}_I}& =  \underset{k \in \mathcal{I}_{cv}}{\mathrm{max}}(\Delta \phi_{k,-\mathbf{z}_I})
\end{align}

While the bounding angles provide a course change that navigates around all vehicles, the goal position may still be feasible depending on the vehicle locations. To determine if the goal position is feasible, we define sets of all feasible course change angles for each circumnavigation direction as
\begin{equation}
	\mathbf{O}_{v,\pm\mathbf{z}_I} = \left\{\begin{array}{ll}
	\bigcap_{m=1}^j\mathbf{O}_{k_m,\pm \mathbf{z}_I},& \begin{subarray}{l} \bigcap_{m=1}^{j} \mathbf{O}_{k_m,\pm \mathbf{z}_I} \neq \emptyset \\
	\bigcap_{m=1}^{j+1} \mathbf{O}_{k_m,\pm \mathbf{z}_I} = \emptyset
	\end{subarray} \\
	\underset{k\in \mathcal{I}_{cv}}{\bigcap} \mathbf{O}_{k,\pm \mathbf{z}_I}, ~&\mathrm{otherwise}
	\end{array}
	\right.
\end{equation}

We use the sets for each circumnavigation direction and the bounding course change angles to define the set of vehicle course change angles as
\begin{align}
	\label{EqODelPhiPerVeh}
	\mathbf{O}_{\Delta \phi,v} = \left\{\begin{array}{ll}
	\left\{\Delta \phi_g\right\}, & \Delta \phi_g \in \mathbf{O}_{v,\mathbf{z}_I} \cap \mathbf{O}_{v,-\mathbf{z}_I} \\
	\left\{\Delta \phi_g,\Delta \phi_{v,-\mathbf{z}_I}\right\}, & \Delta \phi_g \in \mathbf{O}_{v,\mathbf{z}_I}, \Delta \phi_g \notin \mathbf{O}_{v,-\mathbf{z}_I} \\
	\left\{\Delta \phi_{v,\mathbf{z}_I},\Delta \phi_g\right\}, & \Delta \phi_g \in \mathbf{O}_{v,-\mathbf{z}_I},\Delta \phi_g \notin \mathbf{O}_{v,\mathbf{z}_I} \\
	\left\{\Delta \phi_{v,\mathbf{z}_I},\Delta \phi_{v,-\mathbf{z}_I}\right\}, &\mathrm{otherwise}
	\end{array}
	\right.
\end{align}

Now that the critical obstacles and vehicles have been combined independently, a final course change must be determined. As the vehicle evaluates the candidate course changes and sets of all feasible course changes, it does so according to the following principles. The main objective is to reach the goal position, so the ability to navigate towards the goal position is checked first, as given by condition 1 in Eq.~\ref{EqFinalDelPhi}. 

Next, we consider three cases where navigation to the goal position is not feasible and there are critical obstacles and/or critical vehicles. If there are critical obstacles and vehicles, and there is no viable course change, which occurs when $\mathbf{O}_{\Delta \phi,v} \cap \mathbf{O}_o = \emptyset$, then re-prioritization is necessary. In this case, the vehicle chooses a course change in $\mathbf{O}_o$ that is closest to $\Delta \phi_g$ and assigns itself a higher priority (relative to the other vehicles in $\mathcal{I}_{cv}$) until it clears $r_{c,k}^*$. This is condition 2 in Eq.~\ref{EqFinalDelPhi}.

If there are critical obstacles and vehicles but $\mathbf{O}_{\Delta \phi,v} \cap \mathbf{O}_o \neq \emptyset$, then the vehicle chooses the desired course change from $\mathbf{O}_{\Delta \phi,v}$ that is in $\mathbf{O}_o$. This is condition 3 of Eq.~\ref{EqFinalDelPhi}. 

If there are only critical obstacles, then the vehicle chooses the desired course change from $\mathbf{O}_{\Delta \phi,o}$ that is in $\mathbf{O}_o$ or the feasible course change from $\mathbf{O}_o$ that is closest to one of the desired changes in $\mathbf{O}_{\Delta \phi,o}$. This is condition 4 of Eq.~\ref{EqFinalDelPhi}.

If there are only critical vehicles, then the choice is the same as the case where there are only critical obstacles, except we use $\mathbf{O}_{\Delta \phi,v}$ and $\mathbf{O}_v$. This is condition 5 of Eq.~\ref{EqFinalDelPhi}.

In the event that all the critical obstacles allow the goal position or there are no critical obstacles, then the non-critical obstacles are evaluated in increasing order starting with the most imminent (i.e.~the one that minimizes Eq.~\ref{EqPMinGlobal}). The first non-critical obstacle/vehicle $k_j$ where $\Delta \phi_g \notin \bigcap_{i=1}^j \mathbf{O}_{k_i}$ for $k_i \in \mathcal{I}_{no} \cup \mathcal{I}_{nv}$ defines the desired course change $\Delta \phi_{k_j}$. The desired course change must respect the sets of all feasible course angles for all preceding obstacles/vehicles. This is condition 6 in Eq.~\ref{EqFinalDelPhi}.
%\begin{spacing}{1.0}
If none of the obstacles/vehicles prohibit navigation to the goal position then $\Delta \phi = \Delta \phi_g$ which is condition 7 of Eq.~\ref{EqFinalDelPhi}. These seven conditions, summarized in Eq.~\ref{EqFinalDelPhi}, are evaluated in sequence until a condition is met:%\end{spacing}
\begin{align}
	\label{EqFinalDelPhi}
	\Delta \phi = \left\{\begin{array}{ll}
	\Delta \phi_g, & ||\mathbf{p}_g - \mathbf{p}_d|| < ||\mathbf{p}_{min}-\mathbf{p}_d|| \\
	\underset{\begin{subarray}{c}
		\Delta \phi \in \mathbf{O}_{o}
		\end{subarray}}{\mathrm{argmin}}\left(|\Delta \phi - \Delta \phi_{g}|\right), & \mathbf{O}_{c,z} = \emptyset, \mathcal{I}_{co} \neq \emptyset, \mathcal{I}_{cv} \neq \emptyset \\
	\underset{\begin{subarray}{c}
		\Delta \phi \in \mathbf{O}_{o} \\
		\Delta \phi_{v} \in \mathbf{O}_{\Delta \phi,v}
		\end{subarray}}{\mathrm{argmin}}\left(|\Delta \phi - \Delta \phi_{v}|\right), & \Delta \phi_g \notin \mathbf{O}_{c,z}, \mathcal{I}_{co} \neq \emptyset, \mathcal{I}_{cv} \neq \emptyset \\
		\underset{\begin{subarray}{c}
		\Delta \phi \in \mathbf{O}_{o} \\
		\Delta \phi_{o} \in \mathbf{O}_{\Delta \phi,o}
		\end{subarray}}{\mathrm{argmin}}\left(|\Delta \phi - \Delta \phi_{o}|\right), & \Delta \phi_g \notin \mathbf{O}_{o}, \mathcal{I}_{co} \neq \emptyset,\mathcal{I}_{cv} = \emptyset \\
		\underset{\begin{subarray}{c}
		\Delta \phi \in \mathbf{O}_{v} \\
		\Delta \phi_{v} \in \mathbf{O}_{\Delta \phi,v}
		\end{subarray}}{\mathrm{argmin}}\left(|\Delta \phi - \Delta \phi_{v}|\right), & \Delta \phi_g \notin \mathbf{O}_{v}, \mathcal{I}_{co} = \emptyset,\mathcal{I}_{cv} \neq \emptyset \\
		\underset{\begin{subarray}{c}
		\Delta \phi \in \mathbf{O}_{c,e}
		\end{subarray}}{\mathrm{argmin}}\left(|\Delta \phi - \Delta \phi_{k_j}|\right), & \Delta \phi_g \in \mathbf{O}_{c,e}^- \\
		\Delta \phi_g, & \Delta \phi_g \in \mathbf{O}_{c,e}
		\end{array} 
		\right. 
\end{align}
\noindent where
\begin{equation}
	\mathbf{O}_{c,e} = \left\{\begin{array}{ll}
	\mathbf{O}_o \cap \left(\bigcap_{i=1}^j \mathbf{O}_{k_i}\right), & \mathcal{I}_{co} \neq \emptyset, \mathcal{I}_{cv} = \emptyset \\
	\mathbf{O}_{v} \cap \left(\bigcap_{i=1}^j \mathbf{O}_{k_i}\right), & \mathcal{I}_{co} = \emptyset, \mathcal{I}_{cv} \neq \emptyset \\
	\mathbf{O}_{c,z} \cap \left(\bigcap_{i=1}^j \mathbf{O}_{k_i}\right), & \mathcal{I}_{co} \neq \emptyset, \mathcal{I}_{cv} \neq \emptyset \\
	\bigcap_{i=1}^j \mathbf{O}_{k_i}, & \mathcal{I}_{co} = \emptyset, \mathcal{I}_{cv} = \emptyset 
	\end{array}
	\right.
\end{equation}
\noindent where $z$ is either $\pm \mathbf{z}_I$ depending on which circumnavigation direction has been chosen, $\mathbf{O}_{c,z} = \mathbf{O}_o \cap \mathbf{O}_{v,z}$ for $z = \pm \mathbf{z}_I$, and $\mathbf{O}_{c,e}^-$ is the same as $\mathbf{O}_{c,e}$ but evaluated up to non-critical obstacle $k_{j-1}$ (i.e.~the last non-critical obstacle/vehicle that still allows navigation to the goal position).

The active obstacle is the obstacle $k_a$ that minimizes Eq.~\ref{EqPMinGlobal} and satisfies $\Delta \phi_g \notin \mathbf{O}_{k_a}$. The circumnavigation direction is only set for obstacles, and remains fixed until $||\mathbf{p}_{k_a,min}-\mathbf{p}_d|| > r_s$. The course change defined by Eq.~\ref{EqFinalDelPhi} is used to generate smooth trajectories in Sec.~\ref{SecSigmoid}. 
%where $i_{nc}$ is the total number of non-critical obstacles, $\mathbf{O}_{i,nc}$ is the set of feasible course angles for non-critical obstacle $i$, $\mathbf{O}_{c,i}=\mathbf{O}_{o} \cap \mathbf{O}_{v,i}$ for $i=1,2$, $\mathbf{O}_{c,e} = \mathbf{O}_{c,1} \cup \mathbf{O}_{c,2}$, $\mathbf{O}_{\Delta \phi,j,nc}$ is the set of desired course changes for non-critical obstacle/vehicle $j\leq k$ where $\Delta \phi_g \notin \bigcap_{i=1}^j\mathbf{O}_{i,nc}$, and $k$ is the total number of obstacles/vehicles from Eq.~\ref{EqOneMatrixO}. The active obstacle is the obstacle that corresponds to the course change chosen. 

\subsection{Velocity Change Definition}
\label{SubSecVelChange}
The vehicle can also make velocity changes to avoid collisions and safely navigate the environment. This section examines the cases where velocity change is appropriate and the process to determine the velocity change, $\Delta v$. 

The only adjustments in velocity are in the three cases described in this section: (1) slowing to reach the goal position, (2) when the vehicle cannot safely pass another vehicle (due to the presence of obstacles), and (3) when the vehicle has come within $r_{c,k}^*$ of another vehicle. We briefly examine these cases.

First, when there is a clear path to the goal position and the vehicle is within some critical distance, $r_{goal}$, of it, the vehicle computes a trajectory to come to a stop at the goal position (or in the case of a fixed wing vehicle, to come to a loiter). The critical distance may be the minimum stopping distance, or a user-defined distance that is greater than the minimum stopping distance. This is the first condition in Eq.~\ref{EqFinalDelV}.

Second, if a vehicle detects another vehicle within its sensor range and there are also obstacles within sensor range of one or both vehicles, the vehicle slows to match the velocity of the slowest vehicle within sensor range, or connected to a vehicle within sensor range. Since the obstacle spacing $r_{obs} \geq 2 r_c + r_{180}$ (from Assumption \ref{AssumpLessCapable}), there is no guarantee that multiple vehicles can fit between obstacles, or that the more capable vehicle is able to safely complete a passing maneuver. The safe solution is then to match velocity so all vehicles can maneuver safely. This is the second condition in Eq.~\ref{EqFinalDelV}.

Lastly, we consider the case where the vehicle has already matched the velocity of a slower vehicle, but the slower vehicle has maneuvered so that $r_{c,k}^*$ is violated. The maneuvering vehicle can either change course (as already established in Sec.~\ref{SecCourseChange}) or it can slow down temporarily. To determine if maintaining course and decreasing velocity is appropriate, we use $\mathbf{v}_{k,s1}$, which is computed in Eq.~\ref{EqVfDef} for the conservative tangent direction. If the course change associated with $\mathbf{v}_{k,s1}$ and a decrease in velocity with $\Delta \phi = 0$ both produce relative velocity vectors in the same direction (e.g.~$\mathbf{v}_{rel} = -\mathbf{\dot{p}}_{d,k} + \mathbf{v}_{k,s1}$), then the vehicle decreases velocity according to condition 3 of Eq.~\ref{EqFinalDelV} instead of performing a lengthy course change maneuver. Since the vehicle is slowing down (instead of maneuvering), it considers course changes for the next closest obstacle (as defined in Sec.~\ref{SecCourseChange}, Eq.~\ref{EqFinalDelPhi}). 
%\begin{spacing}{1.05}
Once the vehicle has re-established a distance $>r_{c,k}^*$ from the other vehicle, then it resumes its previous velocity. This is the final condition in Eq.~\ref{EqFinalDelV}. These four conditions, summarized in Eq.~\ref{EqFinalDelV}, are evaluated in sequence until a condition is met:%\end{spacing}
\begin{align}
	\label{EqFinalDelV}
	\Delta v = 	\left \{\begin{array}{ll}
-||\mathbf{\dot{p}}_d||, & ||\mathbf{p}_g - \mathbf{p}_d|| \leq r_{goal} \\
	\min \limits_{k \in \mathcal{I}_{nr}} \Delta v_{k}, &  \mathcal{I}_{obs,k} \neq \emptyset, \mathcal{I}_{obs} \neq \emptyset   \ \\
	\left(K_{\Delta v}-1\right) ||\mathbf{\dot{p}}_{d}||,~~& r_{k} < r_{c,k}^*, \frac{v_{min}}{||\mathbf{\dot{p}}_{d}||}\leq K_{\Delta v} <1, 0 \in \mathbf{O}_o \\
	v_c - ||\mathbf{\dot{p}}_d||, & \mathrm{otherwise} 
	\end{array}
	\right.
\end{align}
\noindent where $\Delta v_{k} = ||\mathbf{\dot{p}}_{k}|| - ||\mathbf{\dot{p}}_d||, \forall ||\mathbf{\dot{p}}_{k}|| > 0$, $r_{k} = ||\mathbf{p}_{k} - \mathbf{p}_d||$, and $K_{\Delta v}$ is a component of the solution to $[\mathbf{\dot{p}}_{d},~-\mathbf{v}_{rel}]\mathbf{K} = \mathbf{\dot{p}}_{d,k}$, where $\mathbf{K} = [K_{\Delta v},~K_{v_{rel}}]^T$ and $K_{\Delta v}$ and $K_{v_{rel}}$ are scaling constants for the $\mathbf{\dot{p}}_d$ and $\mathbf{v}_{rel}$ vectors, respectively, so that $-\mathbf{\dot{p}}_{d,k} + K_{\Delta v} \mathbf{\dot{p}}_d = K_{v_{rel}} \mathbf{v}_{rel}$ is satisfied. If the scaling constant, $K_{\Delta v}$, causes the vehicle to go below its minimum velocity, $v_{min}$, (e.g.~ a fixed wing aircraft) then the vehicle maintains velocity and makes a course change instead. The velocity change defined by Eq.~\ref{EqFinalDelV} is used to generate smooth trajectories in Sec.~\ref{SecSigmoid}.
%\begin{figure}
%	\centering
%		\includegraphics[width=1.625in]{DelVGeometry}
%		\caption{\label{FigDelVGeom} Geometry to consider for the maneuvering vehicle to slow down or change course.}	
%\end{figure}

%\noindent The vehicle uses $\mathbf{O}_{aug}$ to identify the obstacle $ID$s of the closest sensed point, $ID_r$, and the point most closely aligned to the current or offset course, $ID_{\theta}$.

%If the vehicle has identified both obstacles and vehicles in $\mathbf{O}_{aug}$ as well as $ID_r$ and $ID_{\theta}$ then it decreases its velocity to match the slowest vehicle. Once there are no longer both obstacles and vehicles that are imminent safety threats the vehicle resumes its cruise velocity.

\subsection{Smooth course and velocity transitions}
\label{SecSigmoid}
The trajectory generation algorithm utilizes sigmoid functions to transition from the previous course, $\phi_{n-1}$, and velocity, $v_{n-1}$, to a new course, $\phi_{n}$, and velocity, $v_{n}$, as determined by the course and velocity changes from Eqs.~\ref{EqFinalDelPhi} and \ref{EqFinalDelV}. This section describes the process to make the course and velocity transitions by defining the desired trajectory, $\mathbf{p}_d(t)$ and $\mathbf{\dot{p}}_d(t)$, for $t_{o,n} \leq t \leq t_{o,n} + \tau_{f,n}$, where $t_{o,n}$ is the start of the $n^{th}$ sigmoid curve and $\tau_{f,n}$ is the $n^{th}$ sigmoid curve timespan. 

The hyperbolic tangent function ($\tanh$) is chosen because of its widespread use in generating smooth motion transitions \cite{Fischer2014}. We define the course and velocity functions and their first derivatives as
\begin{align}
			\label{EqSigPhi} \phi &= c_1 \tanh(c_2 \tau - c_3) + c_4 \\
			\dot{\phi}& = c_1 c_2 \left(1-\tanh^2(c_2 \tau - c_3) \right) \\
			\label{EqSigV} v &= d_1 \tanh(d_2 \tau - d_3) + d_4 \\
			\label{EqSigVdot} \dot{v} &= d_1 d_2 \left(1-\tanh^2(d_2 \tau - d_3)\right)
\end{align}

\noindent where $c_i$ and $d_i$ are coefficients to be determined and $\tau$ is the sigmoid curve time. The desired velocity vector is
\begin{equation}
	\label{EqDotPdVec}
	\mathbf{\dot{p}}_d = \left[\begin{array}{c}
					v \cos \phi \\
					v \sin \phi
					\end{array}
					\right]
\end{equation} 
The coefficients are solved analytically by considering the following assumptions: (1) each sigmoid function occurs over the time interval $\tau=0$ to $\tau=\tau_f$, and (2) since $\tanh$ asymptotically approaches -1 and 1, the bounds of the function are approximated by $\pm(1-\varepsilon)$, (where we use $\varepsilon = 10^{-3}$ to reduce the error of this approximation to $<1$\%). The coefficient solutions are written as
\begin{align}
	\label{EqCoeffSummary1}
	c_3 &= d_3 = \tanh^{-1}(1-\varepsilon) = 3.8\\
	\label{EqCoeffSummaryc2}
	c_2 &= d_2 = 2 c_3/\tau_{f} = 7.6/\tau_{f} \\
	\label{EqCoeffSummaryc1}
	c_1 &= c_4 = 0.5 \Delta \phi \\
	\label{EqCoeffSummaryEnd}
	d_1 &= d_4 = 0.5 \Delta v 
\end{align}
\noindent where the sigmoid curve timespan, $\tau_f$, is defined later in Theorem \ref{ThTauF} to respect the vehicle thrust limitations. 

As the vehicle navigates the environment, it receives new sensor information every $\Delta T_s$ seconds. If the sensor update rate is very fast, the vehicle likely does not complete the desired course and velocity changes before new sensor data is available. As a result, the vehicle must wait until there is available thrust then compute course and velocity changes from the most recent sensor data. 

The maximum delay in starting the next maneuver in the cruise velocity constraints is defined in Sec.~\ref{SecSafetyGuarantee}B. This delay allow subsequent maneuvers to begin before previous ones are finished without violating the maximum thrust constraint.

To make use of the new sensor information, the vehicle sums successive sigmoid curves as shown in Fig.~\ref{FigSigIncv2}. Since the sigmoid function and its first four derivatives asymptotically approach 0 (effectively are 0) at $\tau = 0$ and $\tau = \tau_f$, the summed sigmoid curve provides the smoothness guarantee of Property 1. 
\begin{figure}
	\centering
		\includegraphics[width=2.95in]{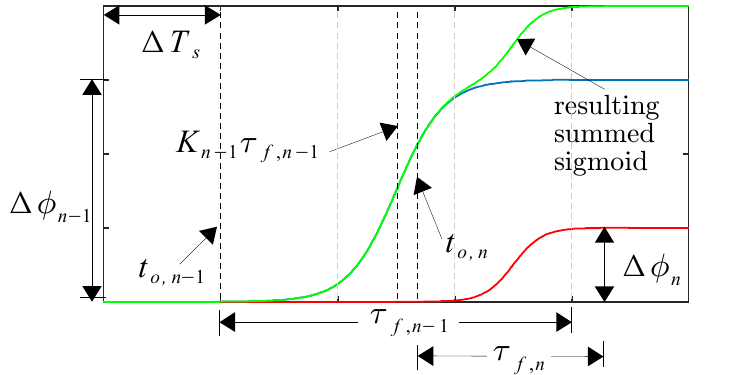}
		\caption{\label{FigSigIncv2} Example of summed sigmoid curves. At $t_{n-1}$ the vehicle determines a course change of $\Delta \phi_{n-1}$ (blue curve) and starts the sigmoid at time $t_{o,n-1}$. With new sensor information an additional course change of $\Delta \phi_n$ (red curve) is necessary. This curve cannot start immediately because it would violate the maximum thrust. Instead, the vehicle computes $t_{o,n}$ from Eq.~\ref{EqTimeOffset} which is after the $n-1$ curve peak acceleration at $K_{n-1}\tau_{f,n-1}$. The two curves are summed to produce a smooth course change (green curve) that does not violate $f_{max}$.}
\end{figure}
%\begin{spacing}{1.0}
The sigmoid curves cannot be summed arbitrarily without violating the vehicle's maximum thrust, $f_{max}$. Instead, we scale and shift subsequent maneuvers such that the summation of the maneuvers (i.e.~sigmoid curves) is bounded to respect $f_{max}$. The curve scaling is achieved by varying the sigmoid curve timespan, $\tau_f$, and we introduce an offset time, $t_{o,n}$, to shift the curve start. The summed sigmoid functions are defined as%\end{spacing}
\begin{align}
	\label{EqSigPhiSum} \phi(t) &= \sum_{i=1}^n \left(c_{1,i} \tanh(c_{2,i} (t_n - t_{o,i}) - c_{3,i}) + c_{4,i}\right) \\
	\label{EqSigVSum} v(t) &= \sum_{i=1}^n \left(d_{1,i} \tanh(d_{2,i} (t_n- t_{o,i}) - d_{3,i}) + d_{4,i}\right) 
\end{align}
\noindent where
\begin{equation}
	\label{EqTimeOffset}
	%t_{o,n} = \mathrm{max}(t_n,t_{o,n-1} + \tau_{n-1,max})
	t_{o,n} = \left\{\begin{array}{ll}
		t_n, &n=1 \\		
		\mathrm{max}(t_n+\Delta T_c,t_{o,n-1} + t_{int,n}),& n>1
		\end{array}
		\right.
\end{equation}
\noindent where $t_n \geq 0$ is the current time and $t_{int,n}$ is the point on the previous sigmoid curve where thrust becomes available for the next maneuver. 

The definition for $t_{int,n}$ is the intersection point between the sigmoid acceleration curve ($\mathbf{\ddot{p}}_d(t)$), and a linear approximation of the sigmoid curve acceleration as shown in Fig.~\ref{FigSummedSigExplain}A and defined as
\begin{align}
	\label{EqTintIneq}
	& K_{n-1}\tau_{f,n-1} < t_{int,n} < \tau_{f,n-1} \\
	\label{EqTintDef}
	&\frac{-a_{max}}{(1-K_{n-1})\tau_{f,n-1}}t_{int,n} + \frac{a_{max}}{1-K_{n-1}} = \sqrt{v_{n-1}(t_{int,n})^2 \dot{\phi}_{n-1}(t_{int,n})^2 + \dot{v}_{n-1}(t_{int,n})^2}
\end{align}
\noindent We use the linear approximation because it provides simple upper bounds on the sum of two successive $||\mathbf{\ddot{p}}_d(t)||$ curves when the second curve starts at $t_{o,n}$ (as proven in Appendix A). 

The linear approximation slope for the rising, $h_n^+$, and falling, $h_n^-$, sides of the sigmoid curve acceleration are defined by
\begin{align}
	\label{EqSlopePlus}
	h_n^+ &= \frac{a_{max}}{K_n \tau_{f,n}} \\
	\label{EqSlopeMinus}
	h_n^- &= \frac{a_{max}}{(1-K_n) \tau_{f,n}}
\end{align}
\noindent where $a_{max}$ is the remaining acceleration available for tracking the trajectory (after lift and drag forces have been accounted for) and defined in Theorem \ref{ThTauF} by Eq.~\ref{EqAmaxDef}, and $K_n \tau_{f,n}$ is the sigmoid curve time where the acceleration is maximum as defined by
\begin{align}
	\label{EqTnMaxFromKn}
	\tau_{n,max} &= K_n \tau_{f,n},~\mathrm{where}~0\leq K_n\leq 1 \\
	\label{EqKDef}
	K_n &= \frac{1}{2}\left(\frac{\tanh^{-1}(H_n)}{c_3}+1\right) \\
	\label{EqHDef}
	H_n &= \tanh(c_{2,n}\tau_{n,max} -  c_{3,n})%= \tanh(c_3(2K_n -1))	
\end{align}
The computation of $H_n$ is described in Theorem 1 and its proof. We use the linear approximation slope terms when solving for $\tau_{f,n}$ and $t_{o,n}$ to match slopes and respect $a_{max}$ as shown in Fig.~\ref{FigSummedSigExplain}B.

Once the sigmoid curve timespan, $\tau_{f,n}$, is solved according to Theorem 1, the trajectory is fully defined by Eq.~\ref{EqDotPdVec} which uses the definitions for $\phi(t)$ and $v(t)$ from Eqs.~\ref{EqSigPhiSum} and \ref{EqSigVSum}. The trajectory is defined over $t_{o,n} \leq t \leq t_{o,n} + \tau_{f,n}$ where $t_{o,n}$ is defined by Eq.~\ref{EqTimeOffset}. A vehicle controller, such as the one described in Sec.~\ref{SecVehController}, follows the trajectory to navigate the vehicle. 

%Finally, we introduce an offset time, $t_{o,n}$, which determines when subsequent curves should start. As stated previously, we define successive curves to start at or after the intersection of the linear approximation and sigmoid acceleration curves. Since the linear approximation overestimates the sigmoid acceleration curve after this point, the sum of successive curves will be less than $a_{max}$ provided the estimated slopes of the two curves are matched.  Since we assume that the first curve maximizes acceleration to $a_{max}$, any start time; therefore, the earliest the next sigmoid curve, $n$, could start is $\tau_{n-1,max}$. We define the offset time to start the next curve as soon as possible to incorporate the most recent sensor information:

%\noindent where $t_n$ is the current time and $t_{int}$ is the intersection point of the linear approximation and sigmoid curve acceleration. Figure \ref{FigSummedSigExplain}B shows how the estimated slopes are matched so that the total acceleration is less than $a_{max}$. 
 
%This parameter along with $\tau_{f,n}$ are used to ensure the vehicle's thrust limitation is not exceeded. For example, Fig.~\ref{}A shows the $n^{th}$ curve starting too soon and causing a thrust violation. There are two methods to prevent the violation: (1) retain $\tau_{f,n}$ to keep the curve shape and increase $t_{o,n}$ so it does not start as quickly (Fig.~\ref{}B), or (2) retain $t_{o,n}$ so the curve starts sooner and increase $\tau_{f,n}$ to reduce the aggressiveness of the curve (Fig.~\ref{}C).
\begin{figure}
	\centering
		\includegraphics[width=2.95in]{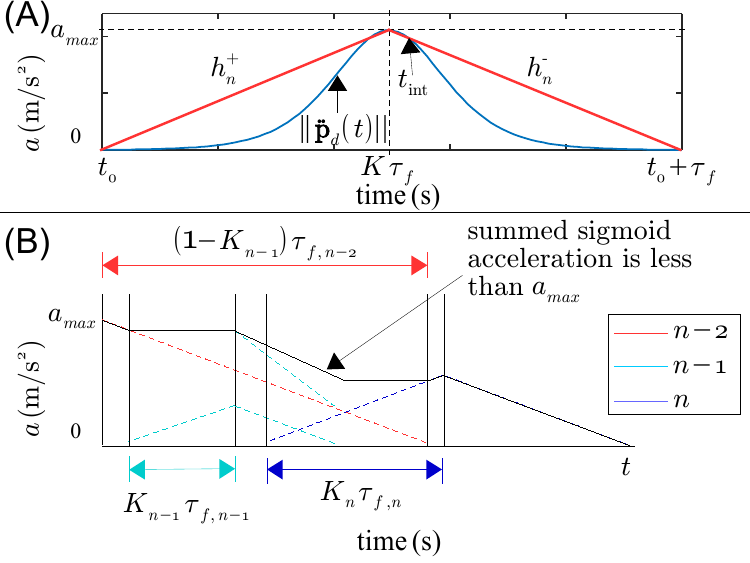}
		\caption{\label{FigSummedSigExplain} (A) The estimated sigmoid curve slopes, $h_n^+$ and $h_n^-$, for a single sigmoid acceleration curve are approximated linearly. The acceleration curve is not always symmetric, e.g. when both $\Delta v \neq 0$ and $\Delta \phi \neq 0$ so $h_n^+$ does not always equal $h_n^-$. (B) Three sigmoid functions are summed together, where the estimated slopes of curves $n-1$ and $n$ match the slope of $n-2$. The resulting summed sigmoid acceleration does not violate $a_{max}$. }
	
\end{figure}

%\subsection{Trajectory Optimization}
%The trajectory generated by Eq.~\ref{EqDotPdVec} can be optimized, if desired, for a cost function $J(\cdot)$. Regardless of the cost function chosen, the same constraints must be imposed. The optimization of the trajectory can be generically written as
%{\small
%\begin{align}
%& \min\limits_{\Delta \phi, \tau_{f,n}}~ J(\cdot) \\ 
%	& \mathrm{subject}~\mathrm{to}\\ \nonumber
%	&\tau_{f,n} \geq \tau_{f,min} \\ \nonumber
%	&||\mathbf{p}_d(t) - \mathbf{p}_{k,i}|| \geq r_c, \forall t \geq 0 , \forall k \in (ID_{obs} \bigcap ID_{nr}) \nonumber
%\end{align}
%}
%\noindent where $\tau_{f,n}$ is the sigmoid curve timespan defined in Theorem \ref{ThTauF}. This optimization step is optional, as the $\Delta \phi$ and $\Delta v$ defined in Eqs.~\ref{EqFinalDelPhi} and \ref{EqFinalDelV} produce valid trajectories. However, similar to \cite{Turpin2014}, a user may want to optimize the trajectory to reduce the clearance distance to obstacles, for example, as this likely reduces the trajectory length. 

\section{Trajectory Guarantees}
\label{SecSafetyGuarantee}
%\subsection{Components of Thrust}
To guarantee the vehicle can navigate safely in the environment, we present several theorems that define the sigmoid curve timespan, bound the maximum velocity, guarantee that the vehicle clears obstacles and other vehicles by $r_c$, and guarantee that the vehicle reaches the goal position in finite time. The proofs for each of these theorems are given as Appendices A-C, respectively.

To aid in the theorems and proofs we define several quantities. First, the maximum available planar force, $f_{p,max}$, and drag force, $\mathbf{f}_w$, are defined by
\begin{align}
\label{EqFplanarMaxDef}
	f_{p,max} &= \sqrt{f_{max}^2 - (mg)^2} \\
	\label{EqFdrag}
	\mathbf{f}_{w} &= K_d ||\mathbf{v}_{w}||^2 (-\mathbf{x}_W) \\
	\label{EqKdDefinition}
	K_d &= \frac{1}{2} \rho C_{D} A_{x_W}
\end{align}
\noindent where $m$ is the vehicle mass, $g$ is gravity, $\mathbf{v}_w = \mathbf{\dot{p}} - \mathbf{v}_{air}$ is the relative wind velocity between the vehicle and the air, $\mathbf{x}_W$ is the wind frame axis aligned with $\mathbf{v}_w$, $\rho$ is the air density, $C_{D}$ is the coefficient of drag, and $A_{x_W}$ is the cross sectional area normal to $\mathbf{v}_w$.

Additionally, we define the planar force vector as the sum of the trajectory and drag forces 
\begin{equation}
	\label{EqFplanarNormalDrag}
	\mathbf{f}_{planar} = m\mathbf{\ddot{p}}_d + \mathbf{f}_w
\end{equation}
\noindent where the maximum planar force magnitude, $f_{p,max}$, occurs when the vectors are aligned. Considering the two components independently, the maximum drag is $K_d v_{w,max}^2$, where $v_{w,max}$ is the maximum wind speed defined in the theorems, and the desired acceleration magnitude is obtained from Eq.~\ref{EqDotPdVec} as
\begin{equation}
	\label{EqAccelMaxMag}
	||\mathbf{\ddot{p}}_d|| = \sqrt{v^2 \dot{\phi}^2 + \dot{v}^2}
\end{equation}
We can further manipulate Eq.~\ref{EqAccelMaxMag} to define its maximum value by substituting the definitions for $K$ and $H$ from Eqs.~\ref{EqKDef} and \ref{EqHDef} into the sigmoid functions from Eqs.~\ref{EqSigPhi} and \ref{EqSigV}. If we also utilize Eq.~\ref{EqCoeffSummaryc2},  we can isolate the dependency on $\tau_f$ as
\begin{equation}
	\label{EqAccelMaxMagSigVars}
	||\mathbf{\ddot{p}}_{d,max}||^2 = \left(\frac{2 c_3}{\tau_f}\right)^2 S_{traj}
\end{equation}
\noindent where 
\begin{equation}
	\label{EqStrajDef}
	S_{traj} = (c_1(d_1 H + d_4)(1-H^2))^2 + (d_1(1-H^2))^2
\end{equation}
\noindent and the solution for $H$ is derived in Appendix A. Each of these relationships is referenced in the following theorems.

\subsection{Sigmoid Curve Timespan}
The sigmoid curve timespan is bound by Theorem \ref{ThTauF} to ensure that the sigmoid curve does not violate $f_{max}$. Theorem \ref{ThTauF} also defines the offset time for the sigmoid curve start.
\begin{theorem}
\label{ThTauF}
Let the sigmoid curve timespan, $\tau_{f}$, for the $n^{th}$ sigmoid be defined as:
\begin{equation}
\label{EqTauFTh}
\tau_{f,n} \geq \left\{\begin{array}{ll}
\tau_{f,min,n}, & n = 1 \\
\tau_{f,min,n}, & t_n \geq \max\limits_{i<n}(t_{o,i} + \tau_{f,i}) \\
\tau_{f,min,n}, & \frac{(1-K_{n-1})\tau_{f,n-1}}{(K_{n})\tau_{f,min,n}} \leq 1 \\
\sqrt{\frac{2c_3 \sqrt{S_{traj}}}{K_{min}h_n}} ,& \mathrm{otherwise}
\end{array}
\right.
\end{equation}
\noindent where
\begin{align}
	\label{EqTauFMinDef}
	\tau_{f,min,n}& = \frac{2 c_3}{a_{max}} \sqrt{S_{traj}} \\
	\label{EqSigSlope}
	h_n &= \left\{\begin{array}{ll}
h_n^-, & n = 1 \\
h_n^-, & t_n \geq \max\limits_{i<n}(t_{o,i} + \tau_{f,i}) \\
h_n^-, & \frac{(1-K_{n-1})\tau_{f,n-1}}{K_{n}\tau_{f,min,n}} \leq 1 \\
h_{n-1}^-,& \mathrm{otherwise}
\end{array}
\right. \\
	\label{EqAmaxDef}
	a_{max} &= 1/m\left(f_{p,max} - K_d v_{w,max}^2 \right)
\end{align}
\noindent $v_{w,max} = \mathrm{max}(v_{n-1},v_{n-1} + \Delta v_n) + v_{air}$, $v_{n-1}$ is the desired final velocity at $t_{n-1}$, $K_{min}=\mathrm{min}(K_n,1-K_n)$, and $H_n$ is the real solution to
\begin{equation}
	\label{EqHCubic}
  -3c_1^2 d_1^2 H_n^3 - 5 c_1^2d_1 d_4 H_n^2 +
  (-2 c_1^2 d_4^2 + d_1^2 c_1^2 - 2 d_1^2) H_n + d_1 d_4 c_1^2 = 0
\end{equation}
\noindent that satisfies $|H_n| < 1-\varepsilon$. Then, if $\tau_{f,n}$ is always chosen to satisfy Eq.~\ref{EqTauFTh}, the vehicle trajectory does not violate $f_{max}$ in the presence of a bounded wind disturbance velocity, $v_{air,max}$, that satisfies $v_{air,max} < \sqrt{f_{planar}/K_d}$.
\end{theorem}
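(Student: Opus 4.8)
\emph{Proof approach.} The plan is to reduce the thrust constraint to a pointwise bound $||\ddot{\mathbf{p}}_d(t)|| \le a_{max}$ on the commanded acceleration, verify that an isolated sigmoid obeys it, and then show by induction on $n$ that the scaling rule Eq.~\ref{EqTauFTh} together with the offset rule Eq.~\ref{EqTimeOffset} keeps the bound in force once curves are summed. For the reduction: by Eq.~\ref{EqFplanarNormalDrag} the planar force to be produced is $\mathbf{f}_{planar} = m\ddot{\mathbf{p}}_d + \mathbf{f}_w$; since the vertical thrust component must equal $mg$, the planar thrust available is at most $f_{p,max} = \sqrt{f_{max}^2 - (mg)^2}$ (Eq.~\ref{EqFplanarMaxDef}), and since $||\mathbf{f}_w|| \le K_d v_{w,max}^2$ with $v_{w,max} = \max(v_{n-1}, v_{n-1}+\Delta v_n) + v_{air}$, the worst-case alignment gives $||\mathbf{f}_{planar}|| \le m||\ddot{\mathbf{p}}_d|| + K_d v_{w,max}^2$. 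Hence $||\ddot{\mathbf{p}}_d|| \le a_{max} = \tfrac1m(f_{p,max} - K_d v_{w,max}^2)$ (Eq.~\ref{EqAmaxDef}) implies $||\mathbf{f}_{planar}|| \le f_{p,max}$ and therefore commanded thrust $\sqrt{||\mathbf{f}_{planar}||^2 + (mg)^2} \le f_{max}$; the hypothesis $v_{air,max} < \sqrt{f_{planar}/K_d}$ is what guarantees $a_{max} > 0$, so a finite admissible $\tau_{f,n}$ exists. It remains to prove $||\ddot{\mathbf{p}}_d(t)|| \le a_{max}$ for all $t$.

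For an isolated sigmoid I would use the coefficient identities $c_2 = d_2 = 2c_3/\tau_f$, $c_1 = c_4 = \tfrac12\Delta\phi$, $d_1 = d_4 = \tfrac12\Delta v$ (Eqs.~\ref{EqCoeffSummary1}--\ref{EqCoeffSummaryEnd}) and put $T = \tanh(c_2\tau - c_3)$, which sweeps $[-(1-\varepsilon),\,1-\varepsilon]$ as $\tau$ runs over $[0,\tau_f]$. Then Eq.~\ref{EqAccelMaxMag} becomes $||\ddot{\mathbf{p}}_d||^2 = (2c_3/\tau_f)^2 (1-T^2)^2 \big(c_1^2 (d_1 T + d_4)^2 + d_1^2\big)$; differentiating the $T$-dependent factor, discarding the common factor $2(1-T^2)$, and collecting powers of $T$ reproduces exactly the cubic Eq.~\ref{EqHCubic}, whose root $H_n$ with $|H_n| < 1-\varepsilon$ is the interior maximizer. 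Substituting it back gives $||\ddot{\mathbf{p}}_{d,max}||^2 = (2c_3/\tau_f)^2 S_{traj}$ with $S_{traj}$ as in Eq.~\ref{EqStrajDef}, so the single-curve peak is $\le a_{max}$ exactly when $\tau_f \ge \tau_{f,min,n} = \tfrac{2c_3}{a_{max}}\sqrt{S_{traj}}$ (Eq.~\ref{EqTauFMinDef}). This disposes of the first three branches of Eq.~\ref{EqTauFTh}: whenever the new maneuver begins after all earlier curves have ended, or the preceding curve's falling part is short enough that $(1-K_{n-1})\tau_{f,n-1} \le K_n\tau_{f,min,n}$, choosing $\tau_{f,n} \ge \tau_{f,min,n}$ already suffices.

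The substantive case is overlapping curves, which is what Appendix A must carry out. The key lemma is that each sigmoid's acceleration magnitude $||\ddot{\mathbf{p}}_{d,i}(t)||$ is dominated by the triangular envelope $L_i$ that rises from $0$ at $t_{o,i}$ with slope $h_i^+$ to its peak at $t_{o,i} + K_i\tau_{f,i}$ and falls with slope $h_i^-$ to $0$ at $t_{o,i}+\tau_{f,i}$ (Eqs.~\ref{EqSlopePlus}--\ref{EqSlopeMinus}), which is where the bound $\tau_{f,i} \ge \tau_{f,min,i}$ (forced in all four branches) is used to cap the true peak. Next, $t_{int,n}$ of Eq.~\ref{EqTintDef} is precisely the time past which $||\ddot{\mathbf{p}}_{d,n-1}||$ has fallen below its falling envelope, so the offset rule Eq.~\ref{EqTimeOffset} starts curve $n$ only once curve $n-1$ has vacated budget $a_{max} - L_{n-1}(t_{o,n}) \ge 0$; and the fourth branch of Eq.~\ref{EqTauFTh}, namely $\tau_{f,n} \ge \sqrt{2c_3\sqrt{S_{traj}}/(K_{min}h_n)}$ with $h_n = h_{n-1}^-$ (Eq.~\ref{EqSigSlope}), is exactly the requirement that curve $n$'s envelope climb no faster than $h_{n-1}^-$, the rate at which curve $n-1$ frees budget. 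I would then close the induction with the invariant ``the sum of the currently active envelopes is $\le a_{max}$, and at most the most recently started curve is on its rising part'': since every other active curve is then falling, the combined envelope is non-increasing while the newest curve rises (its slope $h_n^+ \le h_{n-1}^-$ is outweighed by $\sum_j h_j^-$ over the earlier falling curves), so placing the newest curve at height $0$ on top of a sum already $\le a_{max}$ keeps the total $\le a_{max}$; once the newest curve peaks, everything is falling. Combined with the reduction, $||\mathbf{f}_{planar}(t)|| \le m a_{max} + K_d v_{w,max}^2 \le f_{p,max}$ at all $t$, hence thrust never exceeds $f_{max}$.

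The hard part will be this last step. The triangular envelope is not an obvious upper bound for the bell-shaped sigmoid acceleration (the two curves can cross near the peak), so the domination lemma needs a genuine sign/convexity argument rather than a one-line estimate; and the bookkeeping of the ``freed budget'' across three or more maneuvers piled up in quick succession — including that each curve's $v_{w,max}$ really does bound the drag while that curve is active — is where the offset-plus-slope-matching design has to be checked carefully. By contrast the single-curve cubic Eq.~\ref{EqHCubic} and the thrust-to-acceleration reduction are routine.
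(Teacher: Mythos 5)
Your overall route is the same as the paper's Appendix A: reduce the thrust constraint to the pointwise acceleration bound $||\mathbf{\ddot{p}}_d||\le a_{max}$, locate the single-curve peak via the cubic in $H_n$ and enforce $\tau_{f,n}\ge\tau_{f,min,n}$, then handle overlapping maneuvers with linear (triangular) slope envelopes, the intersection time $t_{int,n}$, the offset rule, and induction on $n$. However, two points are genuine gaps rather than presentational differences. First, in the single-curve step you take $d_1=d_4=\tfrac{1}{2}\Delta v$ from Eq.~\ref{EqCoeffSummaryEnd}. With that choice the centripetal term $v^2\dot{\phi}^2$ is evaluated as if the speed during the maneuver were only the velocity increment; for a pure course change at cruise speed ($\Delta v=0$, $v_{n-1}=v_c$) your $S_{traj}$ collapses to zero and Eq.~\ref{EqTauFMinDef} returns $\tau_{f,min,n}=0$, so the claimed bound enforces nothing and the thrust limit can be violated. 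The paper's proof explicitly replaces the Sec.~\ref{SecSigmoid} coefficient by $d_4=v_{n-1}+\tfrac{1}{2}\Delta v_n$ (Eq.~\ref{EqAlternateD4}) before forming the cubic and $S_{traj}$, precisely so the actual speed enters the $v^2\dot{\phi}^2$ term; your symbolic cubic is fine, but the numerical assignment of $d_4$ must be corrected in the same way.

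Second, your key lemma is stated as global domination of each bell-shaped acceleration curve by its triangular envelope, and you correctly flag that this is not true pointwise (near the tails) and would need a separate argument. The paper neither needs nor claims global domination: it compares only the falling side of curve $n-1$ with its falling linear approximation after the unique intersection time $t_{int,n}$ of Eqs.~\ref{EqTintIneq}--\ref{EqTintDef} (both curves decrease monotonically past the peak, so there is one crossing and the line lies above the sigmoid thereafter), and it forces the next curve to start no earlier than that time via Eq.~\ref{EqTimeOffset} with its rising slope matched to $h_{n-1}^-$; the bound in Eq.~\ref{EqThALessThanAmax} then closes the induction. Restructuring your budget argument around this one-sided, post-intersection comparison removes the unproved lemma. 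A smaller discrepancy: you justify the hypothesis $v_{air,max}<\sqrt{f_{planar}/K_d}$ as guaranteeing $a_{max}>0$, whereas the paper derives it by requiring a real, positive cruise-velocity root of the quadratic built from the normal-force bound $f_{n,max}=mv_c^2/r_{min}$ (Eqs.~\ref{EqFnMaxDef}--\ref{EqVairMax}), i.e.\ $c_{vc}\le 0$; your reading is similar in spirit, but as written (the drag bound involves $v_{w,max}$, not $v_{air,max}$ alone) it does not by itself establish the stated condition.
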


\begin{proof}
See Appendix A.
\end{proof}
%\vspace{-.2in}
\subsection{Cruise Velocity Bound}
Theorem \ref{ThVc} derives an upper bound on the vehicle's cruise velocity based on its maximum thrust, sensor update rate and range, and obstacle spacing. These criteria result in three inequalities that must be satisfied for the cruise velocity. Since each inequality provides an upper bound on the velocity, the minimum is chosen.
\begin{theorem}
\label{ThVc}
Let the vehicle's maximum cruise velocity be defined as
%\vspace{-.135in}
\begin{equation}
\label{EqThvc}
v_c \leq \mathrm{min}\left(v_{c,f},~v_{c,s},~v_{c,o}\right)
\end{equation}
\noindent where $v_{c,f}$ is the minimum real, positive solution of 
\begin{equation}
\label{EqFplanarInequalityTh}
	\left(\frac{m}{r_{min}} + K_d\right) v_{c,f}^2 + 2K_d v_{air,max} v_{c,f} + v_{air,max}^2 - f_{planar} = 0
\end{equation}
\noindent $v_{c,s}$ satisfies the following inequalities:
\begin{align}
	\label{EqVcSensorIneq}
	v_{c,s} &\leq \frac{r_s - (t_{d,max}+\tau_{f,s}) v_{o,max} - r_c}{t_{d,max}+ \int_0^{\tau_{f,s}} \sin \phi(t) dt} \\
	\label{EqVcSensorTauF}
	\tau_{f,s} &\geq 
	\frac{c_3}{a_{max}} \left(\tan^{-1}\left(\frac{v_{o,max}}{\sqrt{v_{c,s}^2 - v_{o,max}^2}}\right) + \frac{\pi}{2}\right) v_{c,s}
\end{align}
\noindent where
\begin{equation}
	\label{EqTdmax}
	t_{d,max} = \left\{\begin{array}{ll}
	\frac{0.54c_3\pi}{a_{max}}v_{c,s} + \Delta T_c, & 0.54 \tau_{f,s} \geq 2\Delta T_s\\
	2 \Delta T_s + \Delta T_c, & \mathrm{otherwise}
	\end{array}
	\right.
\end{equation}
\noindent and $v_{o,max}$ is the maximum expected obstacle velocity in the environment (or $v_{o,max}=v_{c,s}$ if the maximum is unknown, producing $\tau_{f,s} \geq \frac{c_3}{a_{max}}\pi v_{c,s}$), $\Delta T_c$ is the maximum on-board algorithm computation time, and $v_{c,o}$ satisfies the following inequalities:
\begin{align}
	\label{EqVcObsIneq}
	v_{c,o} &\leq \frac{r_{obs} - 2r_c}{t_{d,max}+\int_0^{\tau_{f,o}} \sin \phi(t) dt} \\
	\label{EqVcObsTauF}
	\tau_{f,o} &\geq \frac{ c_3}{a_{max}}\frac{1}{2} \pi v_{c,o}
\end{align}
\noindent where $r_{obs}$ is the minimum distance between two obstacles. If the vehicle's maximum cruise velocity satisfies Eq.~\ref{EqThvc}, then the vehicle does not violate $f_{max}$ when making a turn of radius $ r_t \geq r_{min}$ in the presence of a bounded wind velocity disturbance that satisfies $v_{air,max} < \sqrt{f_{planar}/K_d}$ and safely clears obstacles by $r_c$.
\end{theorem}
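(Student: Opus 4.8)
\emph{Proof sketch.} The plan is to treat the three bounds $v_{c,f}$, $v_{c,s}$, and $v_{c,o}$ separately, each enforcing a distinct requirement, and then conclude by taking their minimum. I would first dispose of the thrust-during-turns claim through $v_{c,f}$. A constant-speed turn of radius $r_t$ at speed $v_c$ demands centripetal acceleration $v_c^2/r_t$, largest for the tightest admissible turn $r_t=r_{min}$, so the worst-case planar inertial force is $m v_c^2/r_{min}$. Adding the drag force, whose magnitude is at most $K_d(v_c+v_{air,max})^2$ by Eqs.~\ref{EqFdrag}--\ref{EqKdDefinition} when the wind opposes the motion, the total planar force must not exceed the available planar force budget $f_{planar}$ (i.e. $f_{p,max}$ of Eq.~\ref{EqFplanarMaxDef}). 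Expanding $m v_c^2/r_{min} + K_d(v_c+v_{air,max})^2 \leq f_{planar}$ and collecting powers of $v_c$ produces the quadratic in Eq.~\ref{EqFplanarInequalityTh}; its smallest real positive root is $v_{c,f}$, and $v_c \leq v_{c,f}$ is precisely the condition under which that inequality holds. The side condition $v_{air,max} < \sqrt{f_{planar}/K_d}$ forces the constant term negative, so such a positive root exists (drag at the wind bound alone does not consume the whole planar thrust budget).

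Next I would establish $v_{c,s}$, which must guarantee that an obstacle first detected at the sensor range $r_s$ can still be cleared by $r_c$. The worst case is an obstacle closing head-on at the maximum expected speed $v_{o,max}$, so the closing rate is the vehicle's forward speed plus $v_{o,max}$, and the vehicle cannot begin evading until it has absorbed the sensing/communication sample delay, the on-board computation time, and the deferred-maneuver delay of Sec.~\ref{SecSigmoid}, all collected into $t_{d,max}$ (Eq.~\ref{EqTdmax}). It then executes a course change that in the worst case must rotate the relative velocity through an angle whose sine contributes obstacle-normal displacement $v_{c,s}\int_0^{\tau_{f,s}}\sin\phi(t)\,dt$ over the maneuver, where $\tau_{f,s}$ is the sigmoid timespan of Theorem~\ref{ThTauF} applied to that worst-case turn angle (hence the $\tan^{-1}(v_{o,max}/\sqrt{v_{c,s}^2-v_{o,max}^2})+\pi/2$ factor in Eq.~\ref{EqVcSensorTauF}). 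Requiring the distance still remaining at maneuver completion to be at least $r_c$, namely $r_s - (t_{d,max}+\tau_{f,s})v_{o,max} - v_{c,s}(t_{d,max}+\int_0^{\tau_{f,s}}\sin\phi\,dt) \geq r_c$, rearranges directly to Eq.~\ref{EqVcSensorIneq}. Since $\tau_{f,s}$ and $v_{c,s}$ are mutually dependent, I would argue (via monotonicity of each side in $v_{c,s}$) that the coupled pair Eqs.~\ref{EqVcSensorIneq}--\ref{EqVcSensorTauF} admits a consistent solution and take $v_{c,s}$ to be the largest speed for which it does.

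The bound $v_{c,o}$ follows the same template for the geometry of threading between two obstacles at the minimum spacing $r_{obs}$ of Assumption~\ref{AssumpLessCapable}: subtracting clearance $r_c$ on each side leaves a corridor of width $r_{obs}-2r_c$ in which the vehicle must complete the sharpest corrective turn it will be asked to make there, up to $\pi/2$, giving the turn time $\tau_{f,o}\geq \frac{c_3}{a_{max}}\frac{\pi}{2}v_{c,o}$ of Eq.~\ref{EqVcObsTauF} from Theorem~\ref{ThTauF}; over this maneuver plus the reaction delay its obstacle-normal deviation is bounded by $v_{c,o}(t_{d,max}+\int_0^{\tau_{f,o}}\sin\phi\,dt)$, and requiring this not to exceed $r_{obs}-2r_c$ yields Eq.~\ref{EqVcObsIneq}. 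Finally, taking $v_c=\min(v_{c,f},v_{c,s},v_{c,o})$ satisfies all three conditions simultaneously, so the vehicle never demands more than $f_{max}$ on any turn of radius $\geq r_{min}$ under wind bounded by $v_{air,max}$, and always retains enough room to clear obstacles and other vehicles by $r_c$, which is the claim. I expect the sensor-range step to be the crux: identifying the genuinely worst-case obstacle configuration, accounting for every component of $t_{d,max}$, converting the turning-maneuver displacement into the clean $\int_0^{\tau_{f,s}}\sin\phi(t)\,dt$ form, and demonstrating that the coupled pair Eqs.~\ref{EqVcSensorIneq}--\ref{EqVcSensorTauF} is solvable all require care; the $v_{c,f}$ and $v_{c,o}$ parts are comparatively routine once the force balance and corridor geometry are set up.
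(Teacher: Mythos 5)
Your proposal follows essentially the same route as the paper's proof: the same three worst-case analyses (centripetal-plus-drag force balance rewritten as the quadratic for $v_{c,f}$, the head-on obstacle at sensor range with delay $t_{d,max}$ and turn displacement $v_{c,s}\int_0^{\tau_{f,s}}\sin\phi\,dt$ for $v_{c,s}$, and the corridor of width $r_{obs}-2r_c$ with a $\pi/2$ turn for $v_{c,o}$), combined by taking the minimum. The only details left at sketch level — deriving the $0.54$ factor in the first branch of $t_{d,max}$ from the intersection-point equation, justifying via the tangent-direction definitions that the between-obstacle course change is capped at $\pi/2$, and invoking the constant-velocity case as the distance-maximizing one — are exactly the points you flag as needing care, and they are filled in the same way in the paper.
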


\begin{proof}
See Appendix B.
\end{proof}

\subsection{Goal Position Convergence and Clearance Radius Guarantee}
The convergence to the goal position and clearance radius guarantee are defined in Theorem \ref{ThGoal} to ensure that the vehicle reaches the goal position in finite time, avoids collisions, and clears all obstacles and vehicles by $r_c$.
\begin{theorem}
\label{ThGoal}
Let the course change, velocity change, and circumnavigation direction be defined by Eqs.~\ref{EqFinalDelPhi}, \ref{EqFinalDelV}, and \ref{EqCircumDir}, respectively. If the vehicle generates a trajectory from these equations, and the maximum cruise velocity satisfies Theorem \ref{ThVc}, then the vehicle reaches the goal position (provided $||\mathbf{\dot{p}}_g||=0$) in finite time and clears all obstacles by $r_c$. 
\end{theorem}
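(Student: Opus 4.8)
The plan is to prove the theorem as two separate assertions: (i) a safety invariant --- the generated trajectory keeps the vehicle at least $r_c$ from every obstacle and from every other vehicle it is responsible for --- which is Property~2; and (ii) a termination statement --- the vehicle reaches the stationary goal in finite time. Together these give the claim, and I would treat them with quite different machinery: an induction for (i) and a finiteness/no-livelock argument for (ii).

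For assertion (i), I would induct over the sensor/communication epochs, which are spaced $\Delta T_s$ apart, with hypothesis that the vehicle stays at least $r_c$ from every body throughout $[t_n,t_{n+1}]$ and that at $t_{n+1}$ at least one feasible circumnavigating heading is reachable before $r_c$ would be breached. The base case uses $r_s > r_c$ (Assumption~\ref{AssumpSensorComms}) together with Assumption~\ref{AssumpGoalPos}, so every body is first sensed at range $r_s$, well outside $r_c$. The inductive step rests on three facts. First, the projected minimum and extent points are obtained by padding the sensed geometry outward by exactly $r_c$ (Eqs.~\ref{EqPkiStar} and \ref{EqPke1SinglePt}--\ref{EqPhiHObs}), so every heading in $\mathbf{O}_k$ steers the vehicle along a line that clears the true body by $r_c$. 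Second, the desired velocity $\mathbf{v}_{k,s1}$ in Eq.~\ref{EqVfDef} matches the body's velocity component normal to that line, so the normal separation is non-decreasing once the vehicle is on a feasible heading, and $v_{k,rem,s1} > |v_{k,\parallel,s1}|$ because $||\mathbf{\dot{p}}_d|| > ||\mathbf{\dot{p}}_{k}||$ (Assumption~\ref{AssumpLessCapable}, or the ranking rule of Sec.~\ref{SubSecVehManeuver} for vehicles), so there is always residual speed to carry the vehicle along the face. Third, Theorem~\ref{ThVc} caps $v_c$ so that the worst-case corrective maneuver --- a full $180\degree$ turn of arc length $r_{180}$ --- together with the actuation/communication latency $2\Delta T_s + \Delta T_c$ and the body's displacement over $\Delta T_{tot}$ all fit inside the buffer $r_{c,k}^*-r_c$; this is exactly why $r_{c,k}^*$ is defined as in Eq.~\ref{EqRcStar} (cf.\ Appendix C). Hence a feasible heading is always reachable in time, Eq.~\ref{EqFinalDelPhi} always selects one (condition~2 resolves the worst case of conflicting critical obstacles and vehicles by self-re-prioritizing), and Theorem~\ref{ThTauF} guarantees the summed sigmoids that execute the chosen change respect $f_{max}$ so the trajectory is trackable. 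This closes the induction and gives clearance for all time.

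For assertion (ii), I would first dispose of the trivial case: whenever condition~1 or condition~7 of Eq.~\ref{EqFinalDelPhi} holds, the vehicle heads straight at the goal, which is a finite distance away (Assumption~\ref{AssumpGoalPos}) at a speed bounded below by a positive constant, until it enters $r_{goal}$ where condition~1 of Eq.~\ref{EqFinalDelV} executes a finite stopping maneuver. It therefore suffices to show the vehicle cannot remain in a circumnavigating mode forever. By Assumption~\ref{AssumpLessCapable} there are finitely many obstacles, each of finite extent, moving at constant velocity strictly below the minimum cruise velocity on a constant course, and by the ranking rule each vehicle treats only finitely many other vehicles as moving bodies --- so the pool it must clear is finite. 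While a body is the active obstacle its circumnavigation direction is frozen until $||\mathbf{p}_{k_a,min}-\mathbf{p}_d|| > r_s$ (Sec.~\ref{SecCourseChange}); during that time the vehicle sweeps monotonically around the body's finite angular extent with strictly positive relative tangential speed $v_{k,rem}-v_{k,\parallel}$, so it traverses the extent in finite time --- the estimates $t_{k,e1},t_{k,e2}$ of Eqs.~\ref{EqTtv}--\ref{EqTtv2} are precisely that time --- after which $\Delta\phi_g$ becomes feasible or the body exits sensor range. Because each body recedes at constant velocity slower than the vehicle, a body once left behind (range $>r_s$, receding) cannot re-block the path to the fixed goal, so only finitely many circumnavigation episodes can occur before the straight line to the goal is permanently clear, at which point the trivial case applies.

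The step I expect to be the main obstacle is the no-livelock argument in assertion (ii) when several moving bodies --- obstacles and maneuvering vehicles, together with the velocity-matching behavior of Eq.~\ref{EqFinalDelV} in which a fast vehicle slows to the slowest body in its neighborhood --- interact simultaneously around non-convex obstacles. One needs a monotone quantity (for instance a detour-discounted distance-to-goal, or a well-ordering on the set of still-blocking bodies) that strictly decreases across circumnavigation episodes, and arranging that both the moving obstacles and the re-prioritization escape of condition~2 of Eq.~\ref{EqFinalDelPhi} respect it is the delicate part; I would expect Appendix~C to carry out this bookkeeping. Everything else reduces to the $r_c$-padding geometry of Sec.~\ref{SecTrajGen} and the velocity and thrust bounds already established in Theorems~\ref{ThTauF} and \ref{ThVc}.
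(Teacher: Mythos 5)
Your decomposition matches the paper's exactly (Appendix C proves clearance and finite-time convergence as two separate assertions), and your clearance half contains essentially the paper's ingredients: the $r_c$/$r_{c,k}^*$ padding of the projected extent points, the normal-velocity matching of Eq.~\ref{EqVfDef}, and the fact that $r_{c,k}^*$ in Eq.~\ref{EqRcStar} is sized so a worst-case $180\degree$ turn plus the $2\Delta T_s + \Delta T_c$ latency fits inside the buffer (the paper re-derives this as Eq.~\ref{EqTheRcStar}), with Theorems~\ref{ThTauF} and \ref{ThVc} supplying trackability and the turning-room bound. Organizing this as an induction over sensor epochs is a cosmetic difference, not a different argument.

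The gap is in your convergence half, and it is exactly the step you flag and defer: you assert that while a body is the active obstacle the vehicle ``sweeps monotonically'' around its finite extent, but you never establish that monotonicity, and without it the finite-traverse-time claim (and hence termination) does not follow. The paper's proof is largely devoted to closing this: it shows the fixed circumnavigation direction of Eq.~\ref{EqCircumDir} actually prevents backtracking by proving the sign condition $\mathrm{sgn}\left(\left(\left(\mathbf{p}_{k,min}-\mathbf{p}_d\right)\times\mathbf{\dot{p}}_d\right)\cdot\mathbf{z}_I\right)\mathbf{z}_I=-\mathbf{z}_{k,S}$ (Eq.~\ref{EqPdCompareZlmin}), using $||\mathbf{\dot{p}}_d||>||\mathbf{\dot{p}}_k||$ to argue the realized velocity always lies between $\mathbf{v}_{k,\perp,S}$ and $\mathbf{p}_{k,S}$ for convex obstacles; it then identifies the cases (non-convex obstacles, interacting critical obstacles and vehicles) where this condition is temporarily violated and repairs them with an explicit corrective course change $\Delta\phi^*=-\mathrm{sgn}(z_{k,S})\,|2\pi-\Delta\phi|$, and it invokes the stored most-constraining extent point (Eq.~\ref{EqPkep}) and the refined feasible set (Eq.~\ref{EqSetOrtRefined}) so that obstacle portions leaving sensor range cannot trick the vehicle into reversing or prematurely heading to the goal. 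No separate ``monotone detour-discounted distance'' is constructed; the monotone quantity is effectively the progress along the obstacle face in the frozen circumnavigation direction, and the finiteness of episodes then follows from Assumption~\ref{AssumpLessCapable} (finitely many, finite-size, slower obstacles) rather than from your additional claim that a body once left behind can never re-block the path --- a claim the paper neither makes nor needs, and which you would otherwise have to justify for moving obstacles. For vehicle--vehicle interactions the paper also argues convergence differently from your sketch: other vehicles are not circumnavigated with a fixed direction at all; instead the proof appeals to the fact that they are themselves converging to their goals (or velocity-matched), after which the maneuvering vehicle can pass.
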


\begin{proof}
See Appendix C.
\end{proof} 

\section{Vehicle and Controller}
\label{SecVehController}
The vehicle dynamics for a quadrotor are given in Eqs.~\ref{EqEOMLin} and \ref{EqEOMAng}. Equation \ref{EqEOMLin} is written in the inertial frame, and Eq.~\ref{EqEOMAng} is written in the body frame: 
\begin{align}
	\label{EqEOMLin}
	m\mathbf{\ddot{p}} &= \mathbf{f} + m\mathbf{g} + \mathbf{d}_{p} \\
	\label{EqEOMAng}
	\mathbf{J}\mathbf{\dot{\omega}} &= \mathbf{\omega} \times \mathbf{J} \mathbf{\omega} + \mathbf{u} + \mathbf{R}_{IB}\mathbf{d}_{\omega}
\end{align}

\noindent where $\mathbf{f}$ is the total thrust, $\mathbf{d}_p$ is the translational disturbance (including drag), $\mathbf{J}$ is the vehicle moment of inertia, $\mathbf{\dot{\omega}}$ is the rotational acceleration, $\mathbf{u}$ is the total torque, $\mathbf{R}_{IB}$ is the rotation matrix from the inertial to body frame, and $\mathbf{d}_{\omega}$ is the rotational disturbance. The control inputs are the vehicle force, $\mathbf{f}$, and torque, $\mathbf{u}$. 

The vehicle dynamics also include aerodynamic effects on the propellers like thrust reduction from propeller inflow velocity \cite{Leishman2006,Bramwell,Newman1994} and blade flapping \cite{Hoffmann}. For the purposes of the control law, these terms are added to the disturbance term.

The vehicle controller uses an inner-, and outer-loop control similar to \cite{Bialy2013} and \cite{Cao2016}, where the outer loop controls the translational component and the inner loop controls the rotational component. The outer loop uses a nonlinear robust integral of the sign of the error (RISE) controller \cite{FischerNL2014}, summarized in Eqs.~\ref{EqFischerControl} to \ref{EqFischere2}. The inner loop utilizes the PID control in Eq.~\ref{EqPIDControl} \cite{Cao2016}:
\begin{align}
	\label{EqFischerControl}
	\mathbf{f} &= (k_s + 1) \mathbf{e}_2 - (k_s + 1) \mathbf{e}_2(0) + \nu \\
	\mathbf{\dot{\nu}} &= (k_s + 1) \alpha_2 \mathbf{e}_2 + \beta \mathrm{sgn}(\mathbf{e}_2) \\
		\label{EqFischere2}
	\mathbf{e}_2 &= (\mathbf{\dot{p}}_d - \mathbf{\dot{p}}) + \alpha_1 (\mathbf{p}_d - \mathbf{p}) \\
	\label{EqPIDControl}
	\mathbf{u} &= k_p \mathbf{q}_d + k_i \int \mathbf{q}_d dt + k_d \mathbf{\dot{q}_d}
\end{align}

\noindent where $k_s >0$ and $\alpha_2 > 1/2$ are control gains for the translational controller and $k_p ,k_i,k_d > 0$ are the PID controller gains for the desired Euler angles, $\mathbf{q}_d$, where $\mathbf{q}_d$ are determined from $\mathbf{f}$.

\section{Simulation Results}
\label{SecSim}
To demonstrate the algorithm capabilities, we examine two simulation scenarios: (1) a single vehicle maneuvers around various moving obstacles to a goal position, and (2) multiple vehicles navigate into two entrances of a building to reach goal positions inside the building. We ran the simulations for multiple random initial conditions, and show an example result from each scenario. For all simulations we use Eq.~\ref{EqTauFTh} in Theorems 1 and Eqs.~\ref{EqThvc}-\ref{EqVcSensorTauF}, \ref{EqVcObsIneq} and \ref{EqVcObsTauF} in Theorem 2 as equality constraints. The other simulation parameters and results are discussed in the following sections. 

We introduce a smoothing method to reduce the vehicle switching between the conservative and aggressive tangent directions. This variable, $r_{CA}$, defined in Eq.~\ref{EqRCA}, is similar to $r_{c}^*$ from Eq.~\ref{EqRcStar} to account for the distance the vehicle travels when making a turn. This distance allows the vehicle to safely use the aggressive tangent direction until it is within $r_{CA}$ of the obstacle/vehicle. Once within $r_{CA}$, it has sufficient clearance to maneuver and reverts back to the criteria for conservative and aggressive tangent directions from Eq.~\ref{EqAggresConserv}.
\begin{align}
		\label{EqRCA}
		r_{CA} &= r_c + r_{180} + \Delta T_{tot} v_{\perp,CA} \\
		v_{\perp,CA} &= \mathbf{\dot{p}}_k \cdot \frac{\mathbf{p}_d - \mathbf{p}_{min}}{||\mathbf{p}_d - \mathbf{p}_{min}||}
	\end{align}
\noindent where $r_{180}$ and $\Delta T_{tot}$ are defined in Eq.~\ref{EqRcStar}.

\subsection{Simulation 1}
The first simulation shows a vehicle navigating around moving obstacles to a goal position. The environment has a bounded mean wind disturbance of 3 m/s and there is also gusting. The wind model uses the Von K\'arm\'an power spectral density function over a finite frequency range, then applies that model to the method described in \cite{Cole2013} to create a spatial wind field. The gusting profile is defined in the military specification MIL-F-8785C \cite{MilWind} as a ``$1-\cos$" model. Figure \ref{FigSimExWind}A shows the wind experienced by the vehicle in the simulation.

The vehicle parameters for the simulation are: $m = 0.54$g, $f_{max}=10.17$N, $r_c = 2$m, $r_{obs}=7$m, $r_s=10$m, $\Delta T_s = 1$s, $\Delta T_c = 0.1$s, $J = \mathrm{diag}([0.0017,0.0017,0.0031])$ kg/m$^2$, $C_d$ = 1.6, and $A_{x_w}= [0.20,~0.20,~0.45]$m$^{2}$. The maximum cruise velocity is solved from Theorem 2 as $v_c = 0.89$m/s. The controller gains are $\alpha_1 = 1$, $\alpha_2 = 3$, $k_s = 9$, and $\beta = 0.25$, where all of these values satisfy the constraints outlined in \cite{FischerNL2014} except $\beta$, which produces non-smooth behavior for large values. 

There are four moving obstacles in the simulation with velocity magnitudes ranging from 0.375 m/s to 0.75 m/s and constant courses as shown in Fig.~\ref{FigSim1Snapshots}A. Note that all obstacle velocities are less than the vehicle cruise velocity. Snapshots of the vehicle maneuvering through the environment are shown in Fig.~\ref{FigSim1Snapshots}, the course changes are shown in Fig.~\ref{FigSim1DelPhi}, and the thrust required is shown in Fig.~\ref{FigSim1Thrust}. The vehicle clears all four obstacles by $\geq r_c$, travels at $v_c$ until the goal position, and reaches the goal position in just under 90 seconds. 

\begin{figure}
	\centering
		\includegraphics[width=2.95in]{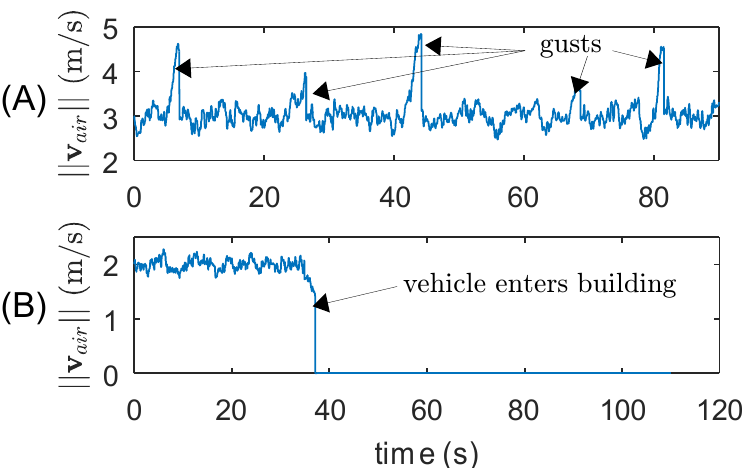}
		\caption{\label{FigSimExWind} (A) Wind experienced by the vehicle in the first simulation with some gusting throughout the simulation. (B) Wind experienced by vehicle 5 in the second simulation. The wind speed reduces to 0 as the vehicle enters the building. }
\end{figure}

\begin{figure*}
	\centering
	\includegraphics[width=0.9\textwidth]{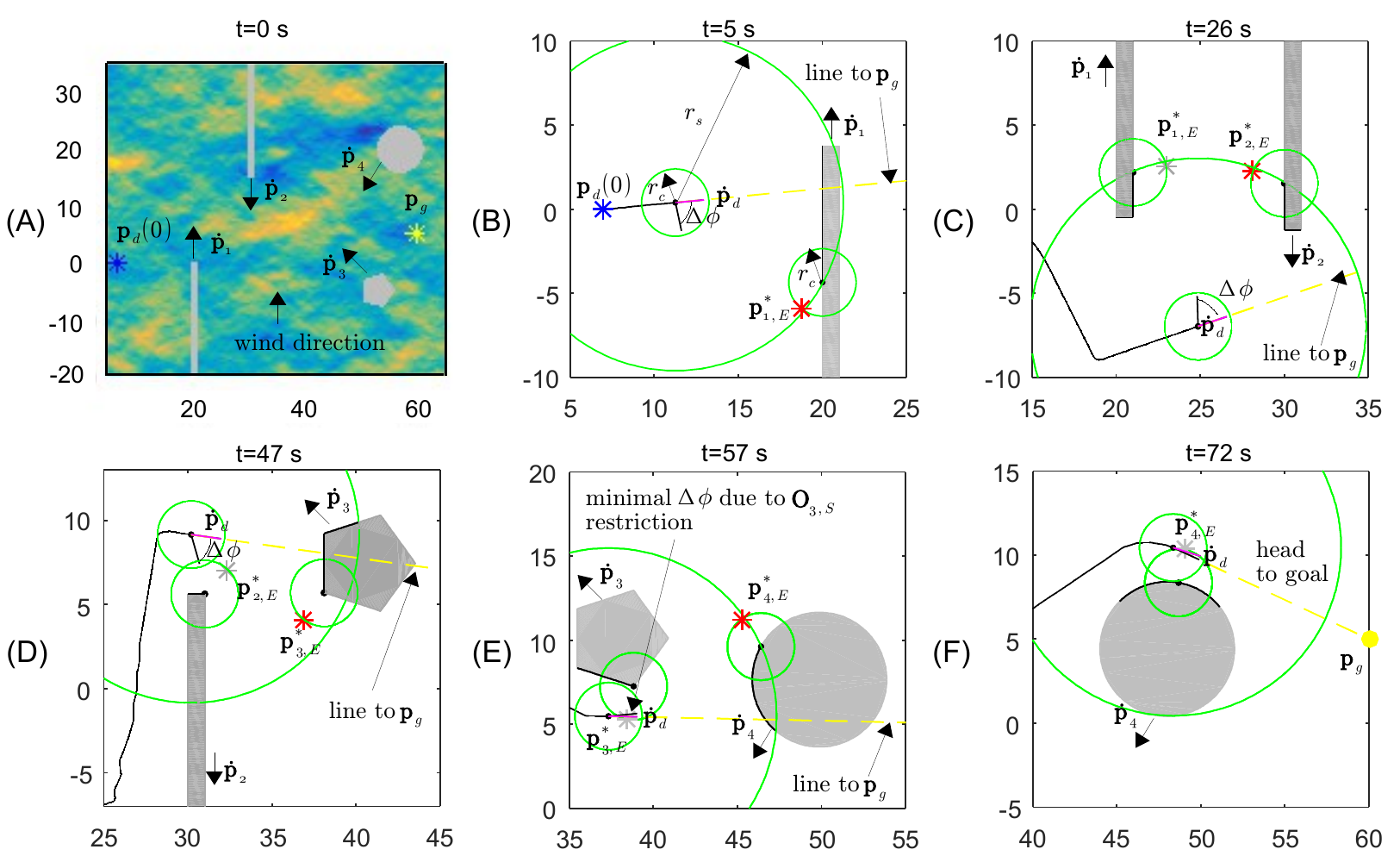}
	\caption{\label{FigSim1Snapshots} (A) Simulation environment with four moving obstacles between the vehicle's starting position and desired goal position. The windfield is shown at $t=0$. (B) The vehicle identifies the first obstacle, chooses the extent to navigate towards and the corresponding projected extent point $\mathbf{p}_{1,E}^*$. The vehicle uses the conservative tangent direction to determine the desired $\Delta \phi$. (C) The vehicle has cleared the first obstacle, heads toward the goal position briefly, and now identifies the course change to navigate around the second obstacle. The vehicle can realize the desired course change for the second obstacle since $\Delta \phi_{2} \in \mathbf{O}_{1}$. Again, the vehicle uses the conservative tangent direction and goes behind the obstacle. (D) The vehicle has traversed the second obstacle sufficiently, briefly heads to the goal position, and is now starting to navigate around the third obstacle. The vehicle is again choosing the conservative tangent direction, but the second obstacle restricts the course change because $\Delta \phi_{3} \notin \mathbf{O}_{2}$. (E) The vehicle has sufficiently cleared the third obstacle so that $\Delta \phi_g \in \mathbf{O}_{3}$; however, $\Delta \phi_g \notin \mathbf{O}_{4}$, so it must navigate around obstacle four. The desired course change to navigate around obstacle four, $\Delta \phi_4 \notin \mathbf{O}_3$, so the vehicle gets as close to $\Delta \phi_4$ as it can from the feasible angles in $\mathbf{O}_3$. (F) The vehicle has sufficiently traversed the fourth obstacle such that $\Delta \phi_g \in O_{4}$, so the vehicle heads to the goal. }
	
\end{figure*}

\begin{figure}
	\centering
		\includegraphics[width=2.9in]{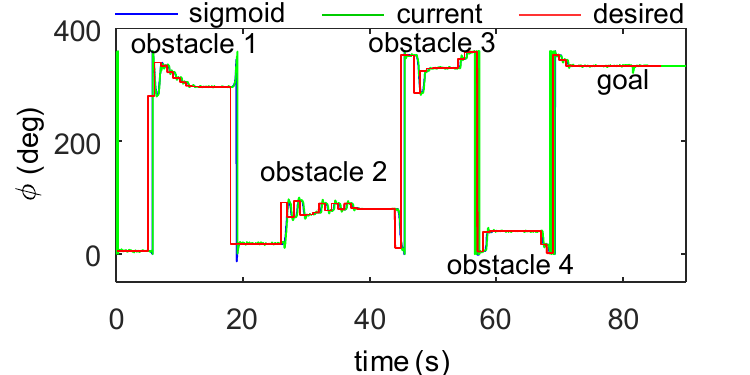}
		\caption{\label{FigSim1DelPhi} Course changes as the vehicle navigates in the environment. The fluctuations in course change as the vehicle navigates each obstacle is due to switching between the conservative and aggressive tangent directions as the vehicle gets new sensor information.} 
	
\end{figure}

\begin{figure}
	\centering
		\includegraphics[width=2.9in]{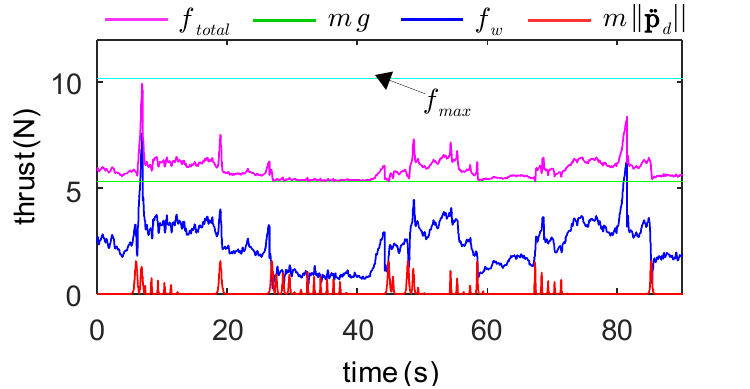}
		\caption{\label{FigSim1Thrust} Thrust required to navigate in the environment. The total thrust required is less than the maximum thrust available.} %Even though the vehicle heads into the wind at times, the thrust is not maximized because the sensor and obstacle spacing constraints in Theorem 2 are more restrictive than the vehicle's thrust limitations on the cruise velocity.} 	
\end{figure}

\subsection{Simulation 2}
The second simulation shows five vehicles navigating into a building and around stationary obstacles to different goal positions. We use temporary goal positions to guide the vehicles inside the building, then once the temporary goal positions are reached the final goal positions are used. There is a bounded mean disturbance of 2 m/s when the vehicles are outside the building, a small transition zone where the wind enters the building, and no wind once the vehicles are fully inside. The transition zone is based on the results of a simulation of the building environment using SolidWorks 2016 Flow Simulation package. Figure \ref{FigSimExWind}B shows the wind experienced by vehicle 5 in the simulation.

The vehicle parameters that differ for the five vehicles are summarized in Table \ref{TblSimParams}. The vehicles are physically the same but differ in maximum thrust and clearance radii to represent vehicles carrying different payloads for different missions. The other parameters are the same as in simulation 1, except $r_{obs} = 2$m. 

\begin{table}
	\centering
		\caption{\label{TblSimParams} Simulation Parameters}
		\begin{tabular}{c|cccccc}
		Vehicle &  1 & 2 & 3 & 4 & 5 \\ \hline
		$f_{max}$ (N) & 10.17 & 10.73 & 9.6 & 9.1 & 10.17\\ \hline
		$r_c$ (m)  & 0.65 & 0.55 & 0.40 & 0.60 & 0.50 \\ \hline
		$v_c$ (m/s) & 0.29 & 0.38 & 0.51 & 0.34 & 0.42 \\ \hline
		\end{tabular}
\end{table} 

Different cruise velocities result in different sets $\mathcal{I}_{mnvr}$ for each vehicle. In this simulation, vehicle 1 does not maneuver around any other vehicles and vehicle 3 must maneuver around all the other vehicles. Figure \ref{FigSim2Overview} shows an overview of the vehicle trajectories overlaid on the windfield at one time instance. We examine the performance of vehicle 5 as a representative case in Figs.~\ref{FigSim2DelPhiDelV} to \ref{FigSim2Snapshots}, showing snapshots of the vehicle navigating the environment, the course and velocity changes, and thrust required, respectively. All the vehicles clear all of the other obstacles/vehicles by their desired $r_c$ values, and the maximum thrust is not violated for any vehicle. The computation time per sensor update was 0.15 seconds or less (for $\leq 600$ sensed points) which is well below the update rate of 2 samples/second.

\begin{figure}
	\centering
		\includegraphics[width=2.9in]{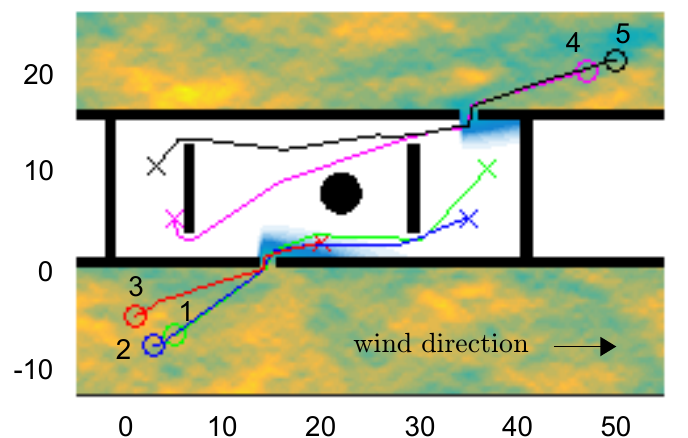}
		\caption{\label{FigSim2Overview} Overview of all the vehicle trajectories navigating into the building overlaid on the windfield at one time instance.}
	
\end{figure}

\begin{figure}
	\centering
		\includegraphics[width=2.9in]{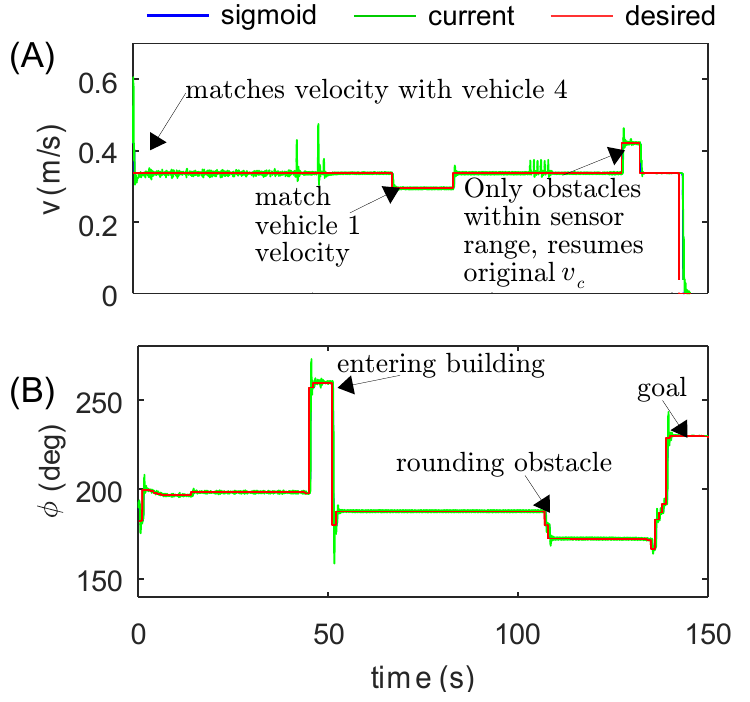}
		\caption{\label{FigSim2DelPhiDelV} (A) The vehicle immediately matches the velocity of vehicle 4 because there are both obstacles and vehicles within sensor range. As the vehicles enter the building, vehicle 5 slows down further because it has come within $r_{c,4}^*$ of vehicle 4. Once vehicle 5 is outside of $r_{c,4}^*$ it resumes its previous velocity. As the vehicle's continue to navigate, vehicle 5 increases velocity to its cruise velocity when there are only obstacles within sensor/communication range. Just before the goal position vehicle 5 is within communication range of vehicle 4 so it matches the velocity of vehicle 4. (B) Course changes for vehicle 5 as it navigates in the environment.}
	
\end{figure}

\begin{figure}
	\centering
		\includegraphics[width=2.75in]{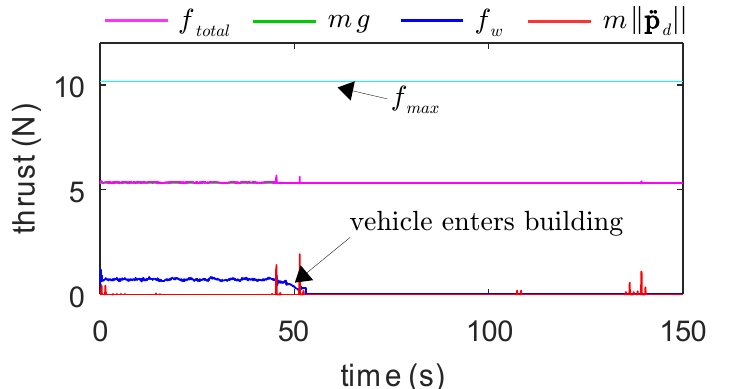}
		\caption{\label{FigSim2Thrust} Thrust required for vehicle 5. The required thrust is well below the maximum thrust since the vehicle is at a reduced velocity for almost all of the simulation and also does not experience any wind once inside the building.}
\end{figure}

\begin{figure*}
	\centering
	\includegraphics[width=0.9\textwidth]{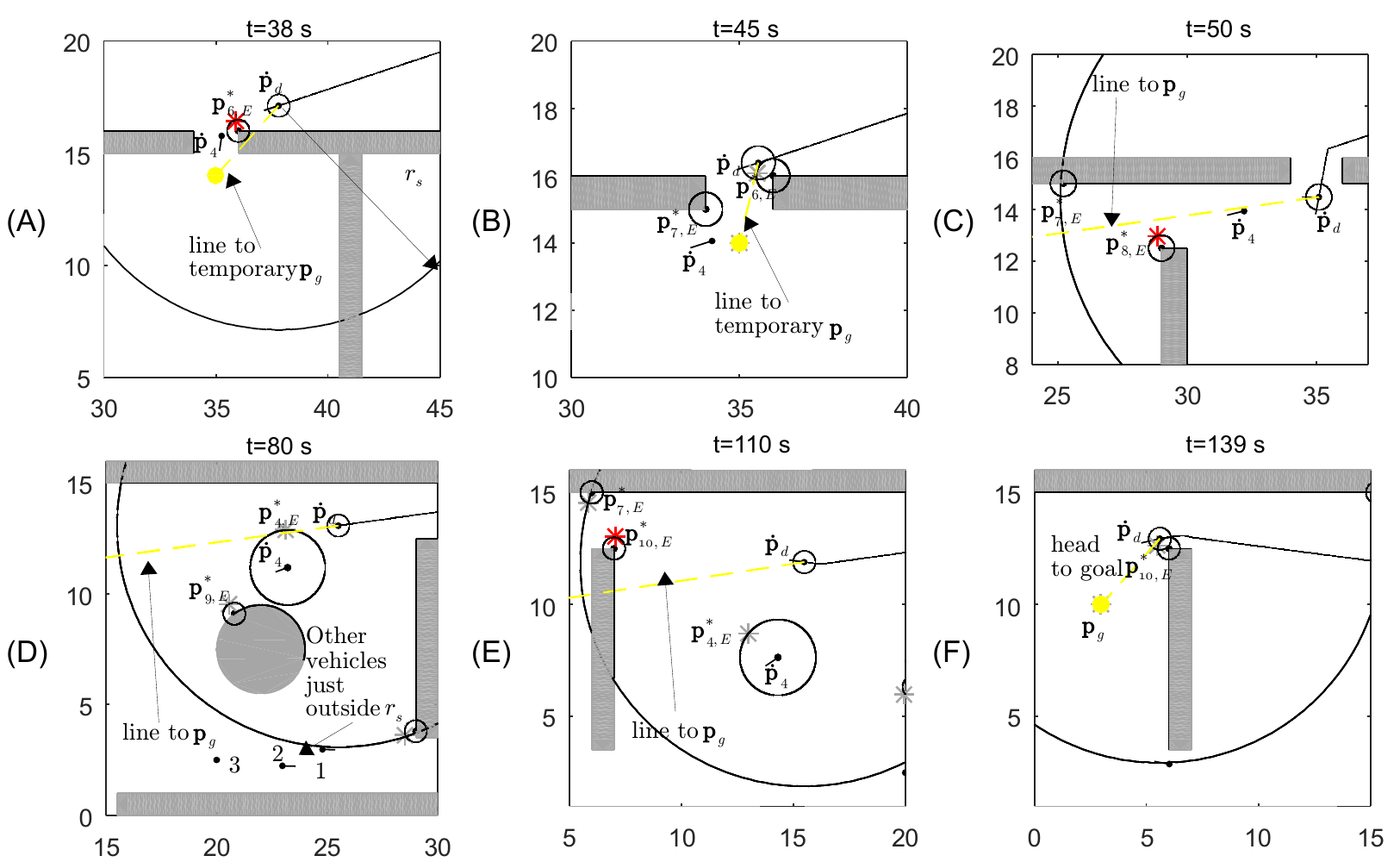}
	\caption{\label{FigSim2Snapshots} Snapshots as vehicle 5 traverses into the building to the goal position. (A) The vehicle has matched the velocity of vehicle 4, and has identified two obstacles within sensor range as it heads towards the temporary goal position. For this example we reserve $k=1$ to $k=5$ for the other vehicles and start identifying the obstacles at $k=6$. Since $\Delta \phi_g \notin \mathbf{O}_{6}$, the vehicle makes a course change of $\Delta \phi_6$. (B) The vehicle has cleared obstacle 6 sufficiently that $\Delta \phi_g \in \mathbf{O}_6$. Since $\Delta \phi_g \in \mathbf{O}_7 \cap \mathbf{O}_4$, the vehicle makes a course change to the temporary goal position. (C) The vehicle has reached the temporary goal and since $\Delta \phi_g \in \mathbf{O}_7 \cap \mathbf{O}_8 \cap \mathbf{O}_4$ the vehicle makes a course change of $\Delta \phi_g$. (D) None of the obstacles or vehicle restrict vehicle 5 from heading towards the goal position. Even though none of other vehicles enter sensor range, vehicle 1 with $||\mathbf{\dot{p}}_1|| < ||\mathbf{\dot{p}}_4||$ is within sensor range of vehicle 4, so both vehicles 4 and 5 match $||\mathbf{\dot{p}}_1||$. (E) The vehicle make a course change to start navigating around obstacle 10 since it is the only obstacle or vehicle that does not allow maneuvering towards the goal position. (F) The vehicle has sufficiently traversed obstacle 10 that it can head to the goal position. Additionally, since there are no other vehicles within sensor range, the vehicle can resume its cruise velocity. A video of the vehicle navigating in the building is available with the supplemental online material.}
	
\end{figure*}

\section{Conclusion}
\label{SecConclusion}
The trajectory generator presented navigates a vehicle in an unknown environment collision-free while respecting the vehicle's physical limitations. The vehicle uses its sensor and communication inputs to compute course and velocity changes to avoid obstacles by a prescribed clearance distance. The sigmoid functions used to transition course and velocity provide piecewise smooth motion with bounded discontinuities and incorporate the course changes from each sensor update by matching the sigmoid slopes and summing the curves. In the event the feasible course changes become an empty set, the vehicle adjusts priority temporarily to avoid collisions. Lastly, the vehicle incorporates the expected wind disturbance, thrust limitations, and sensor constraints to bound the maximum safe cruise velocity. 

There are several directions in which the algorithm presented in this article can be extended. One area is obstacles with non-constant velocity and course. If the algorithm considered everything in the environment as if it were another vehicle, it would handle maneuvering obstacles as well. Using this approach, the algorithm would generate more conservative trajectories for obstacles with constant velocity and course than the trajectories presented in this article. Moreover, the algorithm is defined generically enough that it could be extended to 3D motions by rotating the plane in which the motion occurs or generating a separate altitude adjustment that is combined with the planar trajectory. The thrust required for the altitude maneuver reduces the thrust available for planar motions, so when the two motions are combined the vehicle's thrust limitation is still respected. Finally, even though the simulations presented use stationary goal positions, the algorithm can handle moving goal positions so it can be incorporated into a higher-level motion planner.

\section*{Appendix A: Proof of Theorem \ref{ThTauF}}
The timespan of the $n^{th}$ sigmoid curve, $\tau_{f,n}$, must be set appropriately to ensure that the vehicle does not violate its maximum thrust when performing course and velocity changes. We examine the maximum acceleration of the $n^{th}$ sigmoid curve in conjunction with any sigmoid curves it may be summed with to show that the value of $\tau_{f,n}$ for any sigmoid curve does not cause the vehicle's maximum thrust constraint to be exceeded. Additionally, we develop a constraint on the maximum wind speed related to the vehicle's thrust and drag properties.

\begin{proof}
The maximum acceleration of the summed sigmoid curves, $\phi$ and $v$, from Eqs.~\ref{EqSigPhiSum} and \ref{EqSigVSum}, respectively, at any time $t$ is given by Eq.~\ref{EqAccelMaxMag} and repeated here:
\begin{equation}
	||\mathbf{\ddot{p}}_d|| = \sqrt{v^2 \dot{\phi}^2 + \dot{v}^2}
\end{equation}
The total thrust is given by Eq.~\ref{EqFplanarNormalDrag}, where the thrust is maximized when the trajectory acceleration, $\mathbf{\ddot{p}}_d$, is aligned with the drag, $\mathbf{f}_w$. Since this is the constraining case, we substitute Eq.~\ref{EqAccelMaxMag} into Eq.~\ref{EqFplanarNormalDrag} and re-write Eq.~\ref{EqFplanarNormalDrag} in terms of the vector magnitudes as
\begin{equation}
	\label{EqFplanarMaxTauFIneq}
	f_{p,max} \geq m \sqrt{v^2 \dot{\phi}^2 + \dot{v}^2} + K_d v_{w,max}^2 
\end{equation}
\noindent where $v_{w,max} \leq v_c + v_{air,max}$. This inequality holds for any time $t$.

We conservatively bound the maximum thrust by maximizing each term in the right hand side of Eq.~\ref{EqFplanarMaxTauFIneq} independently. We start with the maximum acceleration from the trajectory, which occurs where $\frac{d||\mathbf{\ddot{p}}_d||}{dt} = 0$ and gives
%The sigmoid curve timespan, $\tau_{f,n}$, is constrained by the vehicle's maximum thrust, and thrust is maximized when the vehicle force to follow the desired trajectory and the drag forces are aligned. Thus, to compute the lower bound for $\tau_{f,n}$, each term in Eq.~\ref{EqFplanarNormalDrag} is maximized independently. From Eqs.~\ref{EqFplanarMaxDef} and \ref{EqAccelMaxMag}, we write the following inequality:

%\noindent where $v_{w,max} = \mathrm{max}(v_{n-1}, v_{n-1} + \Delta v_n) + v_{air}$ is the maximum wind velocity the vehicle experiences during the maneuver, and the sigmoid functions $v=v(t)$ and $\phi=\phi(t)$ transition the vehicle from course $\phi_{n-1}$ to $\phi_n$ and velocity $v_{n-1}$ to $v_n$, and are valid over $t_{o,n} \leq t \leq t_{o,n} + \tau_{f,n}$. The maximum acceleration from the trajectory occurs where $\frac{d||\mathbf{\ddot{p}}_d||}{dt} = 0$, which gives
\begin{align}
%	\frac{d||\mathbf{\ddot{p}}_d||}{dt}& = 0 \nonumber \\ 
%	\frac{d}{dt} \sqrt{v^2 \dot{\phi}^2 + \dot{v}^2} &= 0 \nonumber \\ 
%	\frac{1}{2}\left(v^2 \dot{\phi}^2 + \dot{v}^2 \right)^{\frac{-1}{2}}\left( 2 v^2 \dot{\phi} \ddot{\phi} + 2 %\dot{\phi}^2 v \dot{v} + 2 \dot{v} \ddot{v}\right)& = 0 \nonumber \\
%	\frac{2 v^2 \dot{\phi} \ddot{\phi} + 2 \dot{\phi}^2 v \dot{v} + 2 \dot{v} \ddot{v}}{2\sqrt{v^2 \dot{\phi}^2 + %\dot{v}^2 }} &= 0 \nonumber \\
%	2 v^2 \dot{\phi} \ddot{\phi} + 2 \dot{\phi}^2 v \dot{v} + 2 \dot{v} \ddot{v} &=& 0 \\
	 \label{EqMaxDdotPdSig} v^2 \dot{\phi} \ddot{\phi} +  \dot{\phi}^2 v \dot{v} + \dot{v} \ddot{v} &= 0 
\end{align}

We use Eqs.~\ref{EqFplanarMaxTauFIneq} and \ref{EqMaxDdotPdSig} in an inductive argument to show that the sum of any $n$ sigmoids does not violate $a_{max}$. To do this, we simplify Eq.~\ref{EqMaxDdotPdSig} by utilizing the terms in Sec.~\ref{SecSigmoid}. We evaluate Eq.~\ref{EqMaxDdotPdSig} at $\tau_{n,max}$ by substituting $H_n$ from Eq.~\ref{EqHDef} and the sigmoid function definitions from Eqs.~\ref{EqSigPhi} to \ref{EqSigVdot} into Eq.~\ref{EqMaxDdotPdSig} and simplifying as follows:
\begin{align}
	v^2 \dot{\phi} \ddot{\phi} +  \dot{\phi}^2 v \dot{v} + \dot{v} \ddot{v} = 0& \nonumber \\
	(d_1H_n+d_4)^2\left(c_1c_2(1-H_n^2)\right)\left(-2c_1c_2^2H_n(1-H_n^2)\right) + & \nonumber \\  \left(c_1c_2(1-H_n^2)\right)^2(d_1H_n + d_4))\left(d_1 d_2(1-H_n^2)\right) + & \nonumber \\
	\left(d_1 d_2(1-H_n^2)\right)\left(-2 d_1 d_2^2 H_n (1-H_n^2)\right) = 0&  \nonumber \\
%	c_2^3 (1-H_n^2)^2 [(d_1H_n + d_4)^2 (-2c_1^2 H_n) + &\\ \nonumber \left(c_1^2 (d_1H_n + d_4) d_1 (1-H_n^2)\right) %-2d_2^2 H_n ] = 0&\\ \nonumber
%	(-2 c_1^2 H_n)(d_1H_n + d_4)^2 + &\\ \nonumber d_1 c_1^2(d_1H_n + d_4) (1-H_n^2) - 2d_1^2 H_n =0& \\ \nonumber
%	-2 c_1^2 H_n (d_1^2 H_n^2 + 2 d_1 d_4 H_n + d_4^2) + d_1 c_1^2(d_1 H_n + d_4 - d_1 H_n^3 - d_4 H_n^2) - 2 d_1^2 H_n %&=&0 \\
%	-2 c_1^2 d_1^2 H_n^3 - 4 c_1^2 d_1 d_4 H_n^2 - 2 c_1^2 d_4^2 H_n + d_1^2 c_1^2 H_n + d_1 d_4 c_1^2 - d_1^2 c_1^2 %H_n^3 - d_1 d_4 c_1^2 H_n^2 - 2 d_1^2 H_n &= 0 \\
%	\label{EqDerivMax0} -3c_1^2 d_1^2 H_n^3 - 5 c_1^2d_1 d_4 H_n^2 + &\nonumber \\ (-2 c_1^2 d_4^2 + d_1^2 c_1^2 - 2 %d_1^2) H_n + &\nonumber \\ d_1 d_4 c_1^2 &= 0
	\label{EqDerivMax0} -3c_1^2 d_1^2 H_n^3 - 5 c_1^2d_1 d_4 H_n^2 + &\nonumber \\ (-2 c_1^2 d_4^2 + d_1^2 c_1^2 - 2 d_1^2) H_n + d_1 d_4 c_1^2 = 0&
\end{align}
\noindent where $d_4$ is modified from the sigmoid coefficient definition in Eq.~\ref{EqCoeffSummaryEnd} for this proof to include the velocity $v_{n-1}$, which is the desired final velocity at $t_{n-1}$ (i.e.~from the previous sigmoid function): 
\begin{eqnarray}
	\label{EqAlternateD4}
	d_4 &=& v_{n-1} + \frac{1}{2} \Delta v_n 
\end{eqnarray}
The final result in Eq.~\ref{EqDerivMax0} is a cubic polynomial in $H_n$. Since all the coefficients are known, the roots are solvable. Recall that $H_n = \tanh(c_2 \tau_{n,max} - c_3)$ (from Eq.~\ref{EqHDef}); therefore, to be a physically meaningful solution, the roots must be real and satisfy $|H_n| < 1-\varepsilon$. For any value of its coefficients, Eq.~\ref{EqDerivMax0} has one real root that satisfies $|H_n| < 1-\varepsilon$ due to the unimodal shape of the acceleration curve. This result is proven in Appendix D.

The solution to Eq.~\ref{EqDerivMax0}, is used in Eq.~\ref{EqKDef} to give the ratio of the time of maximum acceleration to the total curve span, thus indicating the shape of the acceleration curve. To bound the timespan, we introduce the constraint on acceleration in Eq.~\ref{EqTauF1Proof}, where the solution to $H_n$ from Eq.~\ref{EqDerivMax0} is used to evaluate the $S_{traj}$ term. All the terms needed to bound the first term in Eq.~\ref{EqFplanarMaxTauFIneq} are consequently defined.

The second term in Eq.~\ref{EqFplanarMaxTauFIneq} for the drag force includes the known variable $K_d$, and we define $v_{w,max} =  \mathrm{max}(v_{n-1}, v_{n-1} + \Delta v_n) + v_{air,max}$, for the $n^{th}$ sigmoid. We have now defined all the terms in Eq.~\ref{EqFplanarMaxTauFIneq} so we can bound $\tau_{f,n}$.
%\begin{equation}
%	\label{EqVresDragMax}
%	v_{w,max} = \mathrm{max}\left(v_i,~v_i+\Delta v\right) + v_{air}
%\end{equation}
%To facilitate the solution for $\tau_f$ we also define
%\begin{align}
%	\label{EqAmax}
%	a_{max} &= \frac{1}{m}\left(f_{planar} - K_d v_{w,max}^2\right) \\
%	\label{EqStraj}
%	S_{traj} &= (c_1(d_1 H_n + d_4)(1-H_n^2))^2 + (d_1(1-H_n^2))^2
%\end{align}

We start by re-arranging Eq.~\ref{EqFplanarMaxTauFIneq}, substituting Eqs.~\ref{EqAccelMaxMag} and \ref{EqAccelMaxMagSigVars} for the sigmoid acceleration, and Eq.~\ref{EqAmaxDef} for the known acceleration terms, and simplifying to give 
\begin{align}
	%\frac{1}{m} \left(f_{p,max} - K_d v_{w,max}^2\right) &\geq \sqrt{v^2 \dot{\phi}^2 + \dot{v}^2} \nonumber \\
	\frac{1}{m} \left(f_{p,max} - K_d v_{w,max}^2\right) &\geq ||\mathbf{\ddot{p}}_{d,max}|| \nonumber \\
	\frac{1}{m}\left(f_{p,max} - K_d v_{w,max}^2\right) &\geq \frac{2 c_3}{\tau_f}\sqrt{S_{traj}}  \nonumber \\
	a_{max} &\geq \frac{2 c_3}{\tau_f} \sqrt{S_{traj}}  \nonumber \\
	\label{EqTauF1Proof}
	\tau_{f,min,n} &\geq \frac{2 c_3}{a_{max}}\sqrt{S_{traj}}
\end{align}
\noindent where $S_{traj}$ (Eq.~\ref{EqStrajDef}) is also known since $H_n$ is the solution to Eq.~\ref{EqDerivMax0}.

Equation \ref{EqTauF1Proof} is the minimum sigmoid curve timespan that does not violate $f_{p,max}$. This equation is utilized to maximize the planar thrust independent of any previous sigmoid functions, and we use it in two cases. The first case is when the longest of the previous sigmoid curves is complete and satisfies
\begin{equation}
 	t_n \geq \max\limits_{i<n}\left(t_{o,i} + \tau_{f,i}\right)
\end{equation}
%\noindent where $t_{o,n}$ is the sigmoid curve offset defined by:
%\begin{equation}
%	\label{EqSigmoidCurveOffset}
%	t_{o,n} = \mathrm{max}\left(t_i,t_{o,n-1}+K_{n-1}\tau_{f,n-1}\right)
%\end{equation}
\noindent which means that the sigmoid curve is starting after all other previous sigmoid curves have completed. 

The second case is when $\Delta \phi_n$ and $\Delta v_n$ are significant compared to $\Delta \phi_{n-1}$ and $\Delta v_{n-1}$, so that the slope of the $n^{th}$ curve satisfies $h_n^+ \leq h_{n-1}^-$. If we matched slopes for this case, but $K_n \tau_{f,n} > (1-K_{n-1}) \tau_{f,n-1}$, then the maximum acceleration is violated as follows:
\begin{equation}
	\label{EqAmaxSlopeTooBig}
	a_n = \frac{a_{max}}{(1-K_{n-1}) \tau_{f,n-1}} K_n \tau_{f,n} > a_{max}
\end{equation}
To determine if this is the case, we use $\Delta \phi_n$ and $\Delta v_n$ to solve for $\tau_{f,min,n}$ (Eq.~\ref{EqTauFMinDef}) and $K_n$ (Eq.~\ref{EqKDef}) to compare to $\tau_{f,n-1}$ and $K_{n-1}$ as follows: 
\begin{align}
	h_n^+ &\leq h_{n-1}^-  \nonumber \\
	\frac{a_{max}}{K_n \tau_{f,min,n}} &\leq \frac{a_{max}}{(1-K_{n-1}) \tau_{f,n-1}}  \nonumber \\
	\label{EqSlopeCheck}
	\frac{(1-K_{n-1})\tau_{f,n-1}}{(1-K_n)\tau_{min,n}} &\leq 1
\end{align}
If Eq.~\ref{EqSlopeCheck} is satisfied, then Eq.~\ref{EqTauF1Proof} is the solution for the sigmoid curve timespan that respects $a_{max}$. If the inequality is violated, then the slope of the next sigmoid must be matched to the previous one to determine the value of $\tau_{f,n}$ so that the summation does not exceed $a_{max}$. 

The maximum acceleration is a function of $h_{n-1}$, $\tau_{f,n}$, and $K_n$ in Eqs.~\ref{EqTauFMinDef} and \ref{EqSlopePlus}. Since the maximum acceleration appears in the denominator of Eq.~\ref{EqSlopeMinus}, we define $K_{min} = \mathrm{min}(1-K_n,K_n)$ so that $\tau_f$ is as large (and conservative) as possible. Combining these two cases, the maximum acceleration of the $n^{th}$ sigmoid is given by
\begin{align}
\label{EqAmax2}
	a_{h_n} =  \left\{\begin{array}{ll}
	a_{max}, & \frac{(1-K_{n-1})\tau_{f,n-1}}{K_{n}\tau_{min,n}} \leq 1 \\
	 \tau_{f,n}K_{min}h_{n-1}^-, & \mathrm{otherwise}
	\end{array}
	\right.
\end{align}
\normalfont
\noindent Substituting Eq.~\ref{EqAmax2} into Eq.~\ref{EqTauF1Proof} and simplifying produces the following:
\begin{align}
	\tau_{f,n} &\geq \frac{2 c_3}{a_{h_n}}\sqrt{S_{traj}}  \nonumber \\
	\tau_{f,n}^2 &\geq \frac{2 c_3}{K_{min}h_{n-1}^-}\sqrt{S_{traj}} \nonumber \\
	\label{EqTauF2Proof}
	\tau_{f,n} &\geq \sqrt{\frac{2 c_3}{K_{min}h_{n-1}^-}\sqrt{S_{traj}}}
\end{align}
Equation \ref{EqTauF2Proof} gives the value of $\tau_{f,n}$ when the slopes of the sigmoid curves must be matched. All of the cases described are summarized in Eq.~\ref{EqTauFTh}. In the simulation we use Eq.~\ref{EqTauFTh} as an equality constraint. The rest of this section shows that we do not violate the constraint on maximum thrust if Eq.~\ref{EqTauF2Proof} is satisfied.

The use of the slope matching respects $a_{max}$ and $f_{p,max}$ because the linear slope estimation overestimates the sum of two successive sigmoid acceleration curves when the offset time, $t_{o,n}$, is defined by Eq.~\ref{EqTimeOffset}. The more constraining case is when $t_{o,n}$ is dependent on $t_{int,n}$. Since $t_{int,n}$ is the intersection point between the linear approximation and sigmoid curve acceleration it satisfies Eqs.~\ref{EqTintIneq} and \ref{EqTintDef}.
%{\color{blue}
%\begin{align}
%	\label{EqTintIneq}
%	& K\tau_{f,n-1} \leq t_{int,n} \leq \tau_{f,n-1} \\
%	\label{EqTintDef}
%	&\frac{-a_{max}}{(1-K_{n-1})\tau_{f,n-1}}t_{int,n} + \frac{a_{max}}{1-K_{n-1}} = \nonumber \\ 
%	&\sqrt{v_{n-1}(t_{int,n})^2 \dot{\phi}_{n-1}(t_{int,n})^2 + \dot{v}_{n-1}(t_{int,n})^2}
%\end{align}
%}
%\noindent If the $n^{th}$ curve starts at $\tau_{int,n}$, the maximum slope it can have is $h_{n-1}^-$, so that when the two slope approximations are added the following inequality is satisfied:
%\begin{equation}
%	h_{n-1}^-(1-K_{n-1})\tau_{f,n-1} + h_n^+\tau_{n,max} \leq a_{max}
%\end{equation}
Both curves are monotonically decreasing after the maximum acceleration point; therefore, there is only one intersection point and one solution to Eqs.~\ref{EqTintIneq} and \ref{EqTintDef}, and the linear approximation is larger than the sigmoid curve acceleration after this point. Thus, if the sum of the linear approximations does not violate $a_{max}$, then the actual acceleration does not either. We can express this as
%\small{
%\begin{align}
%	&||\mathbf{\ddot{p}}_d(t_{sum,n-1})|| + ||\mathbf{\ddot{p}}_d(t_{sum,n})|| \leq \nonumber \\
%	&\frac{-a_{max}}{(1-K_{n-1})\tau_{f,n-1}}t_{sum,n-1} + \frac{a_{max}}{1-K_{n-1}} + \frac{a_{max}}{K_n \tau_{f,n}} t_{sum,n} \nonumber \\
%	& \leq a_{max}
%\end{align}
%}
\begin{align}
\label{EqThALessThanAmax}
	\sum_{i=2}^{n} ||\mathbf{\ddot{p}}_{d,i-1} (t-t_{o,i-1})|| \leq 
	\sum_{i=2}^n \left(a_{lin,i-1}(t-t_{o,i-1}) + a_{lin,i} (t-t_{o,i}) \right) \leq a_{max}
\end{align}
\noindent for $n \geq 2, ~\forall t \geq t_{int,1}$, where 
\begin{equation}
	a_{lin,n} = \left \{\begin{array}{ll}
	\frac{-a_{max}}{(1-K_n) \tau_{f,n}} t_{lin} + \frac{a_{max}}{1-K_n}, & K_n \tau_{f,n} \leq t_{lin} \leq \tau_{f,n} \\
	\frac{a_{max}}{K_n \tau_{f,n}} t_{lin}, & 0 \leq t_{lin} \leq K_n \tau_{f,n} \\
	0,& \mathrm{otherwise}
	\end{array}
	\right.
\end{equation} 
In summary, Eq.~\ref{EqTauF1Proof} provides a valid timespan solution for the $n=1$ case, and Eq.~\ref{EqTauF2Proof} provides a solution for any $n$ assuming the previous trajectories do not violate $a_{max}$. Therefore, by induction, this procedure provides a feasible solution for any number of summed sigmoids by induction.

The final condition of Theorem \ref{ThTauF} is the restriction on $v_{air,max}$. If we consider that the trajectory force vector from Eq.~\ref{EqFplanarNormalDrag} is a normal force, $\mathbf{f}_n$, with maximum magnitude defined by
\begin{equation}
	\label{EqFnMaxDef}
	f_{n,max} = m \frac{v_c^2}{r_{min}}
\end{equation}
\noindent where $r_{min}$ is user defined, then Eq.~\ref{EqFplanarMaxTauFIneq} is re-written as
\begin{equation}
	\label{EqFplanarMaxTauFIneqNormalForce}
	f_{p,max} \geq m \frac{v_c^2}{r_{min}} + K_d v_{w,max}^2 
\end{equation}
\noindent Additionally, for this analysis we re-define $v_{w,max} = v_c + v_{air,max}$ so that we can write Eq.~\ref{EqFplanarMaxTauFIneqNormalForce} as a quadratic in $v_c$ as follows:
\begin{align}
a_{vc} &v_c^2 + b_{vc} v_c + c_{vc} =0 \\
	a_{vc} &= \frac{m}{r_{min}} + K_d \\
	b_{vc} &= 2 K_d v_{air,max} \\
	\label{EqC_vcDef}
	c_{vc} &= K_d v_{air,max}^2 - f_{planar}
\end{align}
\noindent The roots are then  
\begin{equation}
	v_c = \frac{ -b_{vc} \pm \sqrt{b_{vc}^2 - 4 a_{vc} c_{vc}}}{2 a_{vc}} 
\end{equation}
\noindent To be a physically meaningful solution for $v_c$, the roots must be real and positive, which means $b_{vc}^2 - 4 a_{vc} c_{vc} \geq 0$ and $-b_{vc} + \sqrt{b_{vc}^2 - 4 a_{vc} c_{vc}} > 0$. Re-arranging these two inequalities gives:
\begin{align}
	4a_{vc} c_{vc} &\leq b_{vc}^2 \\
	\label{EqLimitingCase}
	4a_{vc} c_{vc} &< 0
\end{align}
\noindent which shows that the second inequality is the more restrictive constraint. Since $a_{vc} > 0$, Eq.~\ref{EqLimitingCase} reduces to $c_{vc} \leq 0$. Substituting in Eq.~\ref{EqC_vcDef} gives
\begin{align}
	c_{vc} &\leq 0 \\
	K_d v_{air,max}^2 - f_{planar} &\leq 0 \\
	\label{EqVairMax}
	v_{air,max}& \leq \sqrt{\frac{f_{planar}}{K_d}}
\end{align}
\noindent where Eq.~\ref{EqVairMax} provides the maximum wind speed in which the vehicle can safely fly. 
%The slope matching also highlights the conservative nature of the offset time solution in Eq.~\ref{EqSigmoidCurveOffset}. If we consider the case where the previous sigmoid maximizes $a_{max}$ at $t_{o,n-1} + K_{n-1} \tau_{f,n-1}$, then starting curve $n$ prior to $t_{o,n-1} + K_{n-1} \tau_{f,n-1}$ will result in an acceleration that exceeds $a_{max}$. 

%Similarly, we can consider a case such as that shown in Fig.~\ref{FigSummedSigExplain} where the $n-1$ curve is started at $K_{n-2} \tau_{f,n-1}$ and the slope $h_{n-2}$ is matched. If curve $n$ is started prior to $K_{n-2} \tau_{f,n-2} + K_{n-1} \tau_{f,n-1}$ then $a_{max}$ (at least in the linear approximation) is violated. The sigmoid curve drops off more sharply than the linear approximation making the solution conservative to respect $a_{max}$.
\end{proof}

\section*{Appendix B: Proof of Theorem 2}
There are three inequality constraints in Theorem 2 as stated in Eq.~\ref{EqThvc}. Each inequality bounds the cruise velocity due to a different parameter. The first is due to the vehicle's thrust, the second is due to the sensor range and update rate, and the third is due to the obstacle spacing. The minimum of the resulting bounds thus satisfies all three. We examine each inequality separately in this proof. 

\subsection{Thrust Constraint, $v_{c,f}$}
\begin{proof}
The first constraint on $v_c$ is due to the vehicle thrust limitations. Equation \ref{EqFplanarMaxTauFIneqNormalForce} is re-written as Eq.~\ref{EqFplanarInequalityTh}, which provides the inequality constraint for $v_c$ to respect the vehicle's maximum thrust capability and turn radius requirement. 
\end{proof}

\subsection{Sensor Range and Rate Constraint, $v_{c,s}$}
\begin{proof}
The second constraint on $v_c$ is due to sensor limitations; namely, the vehicle must be able to react to obstacles as they are detected to avoid them. We consider a vehicle traveling towards an obstacle, where the velocity vector of the vehicle is opposite the velocity vector of the obstacle, as shown in Fig.~\ref{FigSensorRangeIncremental}. In the worst case scenario, the vehicle cannot react immediately due to time delays from either not sensing the obstacle immediately or being in the middle of a maneuver and not having any available thrust. We bound the cruise velocity with the most constraining case.

We define the maximum time delay from Eq.~\ref{EqTdmax} and repeat it here for clarity:
\begin{equation}
	t_{d,max} = \left\{\begin{array}{ll}
	\frac{0.54c_3\pi}{a_{max}}v_{c,s} + \Delta T_c, & 0.54 \tau_{f,s} \geq 2\Delta T_s\\
	2 \Delta T_s + \Delta T_c, & \mathrm{otherwise}
	\end{array}
	\right.
\end{equation}
\noindent where the first condition is the delay resulting from thrust availability (which we discuss next) and the second condition is the delay resulting from not sensing the obstacle immediately then waiting an additional sensor update to estimate velocity. Both conditions include the maximum computation time to process the sensor data into course and velocity changes.

We define the first condition as the worst case delay due to the vehicle being mid-maneuver from the previous course and velocity change. Recall from Sec.~\ref{SecSigmoid} that the earliest the next maneuver, in this case the maneuver shown in Fig.~\ref{FigSensorRangeIncremental}, can start is when there is thrust available, which starts when $t \geq t_{o,n-1} + t_{int,n}$. Without loss of generality, we assume that the previous maneuver is the first maneuver, in which case $t_{o,n-1} = \Delta T_c$. Next, we need a solution for $t_{int,n}$, which is dependent on the previous maneuver. 

For the previous maneuver we assume a maximum timespan from a worst case course change, $\Delta \phi_{n-1} = \pi$. Since $\Delta \phi_{n-1} > \pi/2$, we satisfy the condition to use the constant velocity case as the most constraining case, as established in Appendix E. Additionally, since we assume it is the first maneuver, the sigmoid curve timespan is defined by Eq.~\ref{EqTauFMinDef} and simplified as follows:
\begin{align}
	\tau_{f,n-1} &= \tau_{f,min,n-1} = \frac{2 c_3}{a_{max}} \sqrt{S_{traj}} \nonumber \\
	&= \frac{2 c_3}{a_{max}} \sqrt{c_1^2 d_4^2} \nonumber \\
	&= \frac{2 c_3}{a_{max}} \frac{1}{2} \Delta \phi v_c^2 \nonumber \\
	\label{EqTauFN1ConstantVelGen}
	&=\frac{c_3 \Delta \phi}{a_{max}} v_c \\
	\label{EqTauFN1ConstantVel}
	&=\frac{c_3 \pi}{a_{max}} v_c
\end{align}
\noindent where $d_1 = 0$, $H = 0$, and $d_4 = v_c$ for the constant velocity case to simplify $S_{traj}$. 

For the constant velocity case $K_{n-1} =0.5$, so to solve for $t_{int,n}$, we simplify Eq.~\ref{EqTintDef} as
\begin{align}
\label{EqTintSimplify}
	\frac{-a_{max}}{0.5 \tau_{f,n-1}} t_{int,n} + 2 a_{max} =v_c \left(\frac{\pi c_3}{\tau_{f,n-1}} \left(1-\tanh^2\left(\frac{2c_3}{\tau_{f,n-1}}t_{int,n} - c_3\right)\right)\right)
\end{align}
If we define $t_{int,n} = K_{t_{int}} \tau_{f,n-1}$ and substitute this into Eq.~\ref{EqTintSimplify}, the resulting equation is only a function of $K_{t_{int}}$ since we know all the other terms ($c_3 = 3.8$ from Eq.~\ref{EqCoeffSummary1}, and $a_{max}$ is dependent on the vehicle):
\begin{align}
	%&\frac{-a_{max}}{0.5 \tau_{f,n-1}} K_{t_{int}}\tau_{f,n-1} + 2 a_{max} = \nonumber \\
	%&v_c \left(\frac{\pi c_3}{\tau_{f,n-1}} \left(1-\tanh^2\left(\frac{2c_3}{\tau_{f,n-1}}K_{t_{int}}\tau_{f,n-1} - c_3\right)\right)\right) \nonumber \\
	%&\left(2- 2K_{t_{int}}\right)a_{max} = a_{max} \left(1-\tanh^2\left(\left(2 K_{t_{int}}-1\right)c_3\right)\right) \nonumber \\
	\label{EqTintConstantVelSimplify}
	&\sqrt{2K_{t_{int}}-1} = \tanh\left(\left(2K_{t_{int}}-1\right)c_3\right) 
\end{align}
\noindent where Eq.~\ref{EqTintConstantVelSimplify} has no analytical solution but is solvable numerically to give $K_{t_{int}} = 0.5365$; therefore we use $t_{int,n} = 0.54 \tau_{f,n-1}$. 

Substituting the definition for $t_{int,n}$ into Eq.~\ref{EqTimeOffset} for the offset time, using $t_{o,n-1} = \Delta T_c$, and using the definition for $\tau_{f,n-1}$ from Eq.~\ref{EqTauFN1ConstantVel}, we get
\begin{equation}
	t_{o,n-1} = 0.54\tau_{f,n-1} + \Delta T_c = \frac{0.54c_3 \pi}{a_{max}} v_{c,s} + \Delta T_s
\end{equation} 
\noindent where we use $v_{c,s}$ as the cruise velocity. This is the same as the first condition in Eq.~\ref{EqTdmax}.
% before it can react and maneuveris $r_s$ away from the obstacle and does not sense it (Fig.~\ref{FigSensorRangeIncremental}A). After $\Delta T_s$, the sensor identifies the obstacle. The vehicle waits one more sensor update to estimate the velocity and calculate a course change (Fig.~\ref{FigSensorRangeIncremental}B). 

\begin{figure}
	\centering
		\includegraphics[width=2.95in]{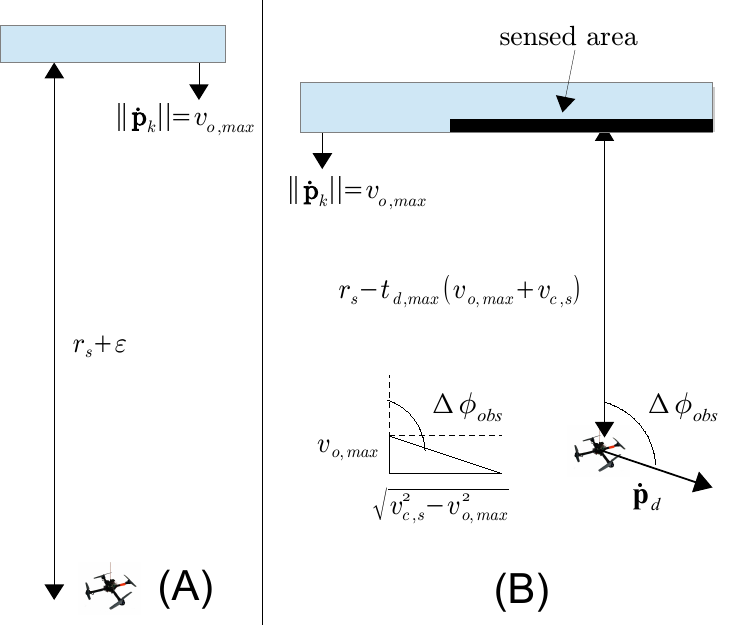}
		\caption{\label{FigSensorRangeIncremental} (A) The obstacle is just outside the vehicle's sensor range so it is not detected. (B) The vehicle detects the obstacle after a sensor update, then waits another sensor update to determine a velocity measurement and calculate a course change, $\Delta \phi_{obs}$. The vehicle must be able to make the course change and still clear the obstacle by $r_c$. } 
\end{figure}

Now that we have the maximum time delay, we compute the expected course change. For both cases of Eq.~\ref{EqTdmax}, the course change is calculated by Eq.~\ref{EqDelPhiObsKnown} for a known (or expected) maximum obstacle velocity in the environment of $v_{o,max}$ as shown in Fig.~\ref{FigSensorRangeIncremental}B: 
\begin{equation}
	\label{EqDelPhiObsKnown}
	\Delta \phi_{obs} = \tan^{-1}\left(\frac{v_{o,max}}{\sqrt{v_{c,s}^2 - v_{o,max}^2}}\right) + \frac{\pi}{2}
\end{equation}
\noindent This equation simplifies to $\Delta \phi_{obs} = \pi/2$, if the obstacle is stationary, and $\Delta \phi_{obs} \to \pi$ as the obstacle velocity approaches the vehicle cruise velocity. 

If there is information about $v_{o,max}$ it should be utilized, as it is undesirable to needlessly over-constrain the cruise velocity bound from the sensor, $v_{c,s}$; otherwise, the worst case where $v_{o,max} = v_{c,s}$ is assumed. In this latter case, the course change is worst when $\Delta \phi_{obs} = \pi$ because the vehicle must turn, match obstacle velocity, and clear the obstacle by $r_c$. The distance traveled is
\begin{equation}
	\label{EqDistanceSingleAxis}
	r_{180} = v_{c,s} \int_0^{\tau_{f,s}} \sin \phi(t) dt
\end{equation}
\noindent where $v(t)$ is the sigmoid function velocity, in this case it is constant, $\phi(t)$ is the sigmoid function course for a course change of $\Delta \phi_{obs}$, and $\tau_{f,s}$ is the sigmoid curve timespan.

To ensure that the cruise velocity is set appropriately and the vehicle does not outrun its sensor range, the following inequality must be satisfied:
\begin{align}
\label{EqIneq2Proof}
	v_{c,s} t_{d,max} &+ v_{c,s} \int_0^{\tau_{f,s}} \sin \phi(t) dt \leq r_s - \left(t_{d,max} + \tau_{f,s}\right)v_{o,max} - r_c
\end{align}
\noindent which takes into account the delay in starting the maneuver, the distance required to perform the maneuver, and the distance the obstacle travels until the vehicle completes its maneuver. When Eq.~\ref{EqIneq2Proof} is re-arranged, it is Eq.~\ref{EqVcSensorIneq}.

The sigmoid curve timespan, $\tau_{f,s}$, satisfies the criteria for $\tau_{f,min}$ in Eq.~\ref{EqTauFTh} for either case in Eq.\ref{EqTdmax}. For the first condition in Eq.~\ref{EqTdmax}, both the previous and current maneuvers are the constant velocity case so $K_{n-1} = K_n = 0.5$ which simplifies condition 3 from Eq.~\ref{EqTauFTh} as
\begin{align}
	\frac{(1-K_n)\tau_{f,n}}{K_{n+1} \tau_{f,n+1}} &\leq 1 \\ \nonumber
	\frac{\tau_{f,n}}{\tau_{f,n+1}} &\leq 1
\end{align} 
\noindent where all the terms in the sigmoid curve timespan for $\tau_{f,n-1}$ and $\tau_{f,n}$ are the same except $\Delta \phi_{n-1}$ and $\Delta \phi_{n}$. Since we assume that $\Delta \phi_{n-1} \leq \Delta \phi_{n}$, the condition is satisfied.

For the second condition in Eq.~\ref{EqTdmax}, the sigmoid curve timespan is for a single maneuver which satisfies either condition 1 or 2 of Eq.~\ref{EqTauFTh}. Therefore, the sigmoid curve timespan is defined as
\begin{equation}
	\label{EqTauFsProof}
	\tau_{f,s} \geq \frac{2 c_3}{a_{max}}\frac{1}{2} \Delta \phi v_{c,s}
\end{equation}

Substituting the course change defined in Eq.~\ref{EqDelPhiObsKnown} into Eq.~\ref{EqTauFsProof} results in
\begin{equation}
	\label{EqIneq2TauFFull}
	\tau_{f,s} \geq \frac{c_3}{a_{max}} \left(\tan^{-1}\left(\frac{v_{o,max}}{\sqrt{v_{c,s}^2 - v_{o,max}^2}}\right) + \frac{\pi}{2}\right) v_{c,s}
\end{equation}
\noindent or for $\Delta \phi = \pi$, Eq.~\ref{EqTauFMinDef} results in
\begin{equation}
	\label{EqIneq2Tauf}
	\tau_{f,s} \geq \tau_{f,180}
\end{equation}
\noindent where
\begin{equation}
	\label{EqTauF180}
	\tau_{f,180} = \frac{ c_3}{a_{max}} \pi v_{c,s}
\end{equation}

Equations \ref{EqIneq2Proof} and \ref{EqIneq2TauFFull} (or \ref{EqIneq2Tauf}) are solved simultaneously for $v_{c,s}$ to bound the velocity based on the sensor range and update rate.
\end{proof}

\subsection{Obstacle Spacing Constraint, $v_{c,o}$}
\label{SubSecObsSpaceConstraint}
\begin{proof}
The third constraint on $v_c$ is due to the obstacle spacing, where the vehicle must clear obstacles by $r_c$ while following the course change definition from Sec.~\ref{SecCourseChange}. There must be enough distance between the obstacles, $r_{obs}$, for the vehicle to make a turn and clear the obstacle by $r_c$.

Figure \ref{FigObsSpacingStationaryAngle} shows an example scenario of a vehicle maneuvering around two obstacles. The minimum obstacle spacing, $r_{obs}$, must be $\geq 2 r_c + r_{180}$ to allow the vehicle enough space to respect a clearing of $r_c$ and also turn around between obstacles. 

\begin{figure}
	\centering
		\includegraphics[width=2.95in]{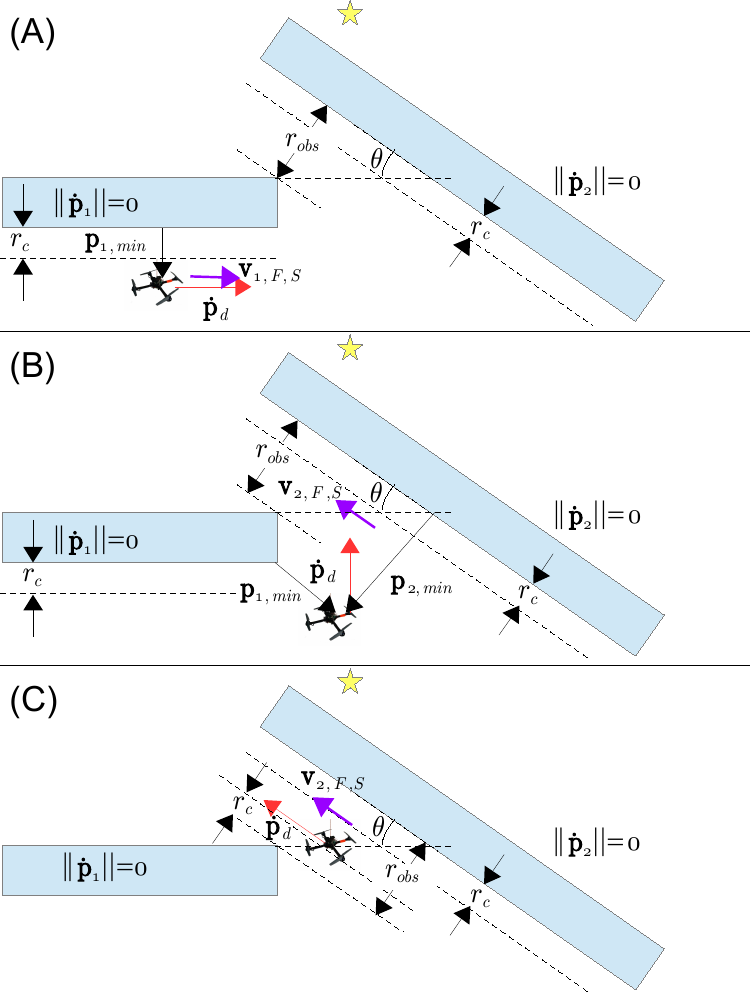}
		\caption{\label{FigObsSpacingStationaryAngle} Example scenario for a vehicle maneuvering around two stationary obstacles at some angle $\theta$ to reach the goal position. (A) The vehicle is $r_c$ away from the obstacle and matching $\mathbf{\dot{p}}_d = \mathbf{v}_{1,S}$. (B) The vehicle has cleared part of the obstacle and calculates a new desired $\mathbf{v}_{2,S}$; however, the vehicle cannot achieve this because it is constrained by $\mathbf{O}_{1}$. (C) The vehicle has now cleared the first obstacle to adjust to the desired $\mathbf{v}_{2,S}$ to traverse the second obstacle.}
	
\end{figure}

The obstacles in Fig.~\ref{FigObsSpacingStationaryAngle} are at some angle $\theta$ relative to one another. The bounding cases for the obstacle orientations are $\theta = 0$ and $\theta = \pi/2$. When $\theta = \pi/2$ the vehicle also makes a $\pi/2$ course change around obstacle 1. When $\theta = 0$, the vehicle may make a course change up to $\Delta \phi = \pi$ to fully traverse obstacle 1 from the initial course in Fig.~\ref{FigObsSpacingStationaryAngle}A. While the vehicle may eventually make the full $\Delta \phi = \pi$ maneuver, the tangent directions (Eqs.~\ref{EqPljS1} to \ref{EqPljS4}) that lead to the course change definition (Eq.~\ref{EqDelPhiPerTanDir}) only permit course changes up to $\Delta \phi = \pi/2$ as the vehicle maneuvers between obstacles. Therefore, both bounding cases lead to a course change of $\pi/2$. 

Equation \ref{EqDistanceSingleAxis} is used to determine the distance traveled for the $\pi/2$ maneuver, and the constant velocity case is still the constraining case (Appendix D). Based on Fig.~\ref{FigObsSpacingStationaryAngle} and the course change definition in Sec.~\ref{SecCourseChange}, the following inequality must be satisfied:
\begin{equation}
	\label{EqIneq3Proof}
	v_{c,o} \int_0^{\tau_{f,o}} \sin \phi(t) dt + v_{c,o} t_{d,max} + 2r_c \leq r_{obs}
\end{equation}
\noindent where $t_{d,max}$ is defined in Eq.~\ref{EqTdmax} to account for a delay in starting the obstacle maneuvering. When Eq.~\ref{EqIneq3Proof} is rearranged, it is Eq.~\ref{EqVcObsIneq}. Using Eq.~\ref{EqTauFN1ConstantVelGen} for $\Delta \phi = \pi/2$ results in
\begin{equation}
	\label{EqIneq3TauF}
	\tau_{f,o} = \frac{c_3}{a_{max}}\frac{1}{2} \pi v_{c,o}
\end{equation}
Equations \ref{EqIneq3Proof} and \ref{EqIneq3TauF} are solved simultaneously for $v_{c,o}$ to provide a bound that ensures the vehicle clears stationary obstacles by $r_c$ given a spacing of $r_{obs}$. This also holds for moving obstacles since the obstacle spacing for moving obstacles is $\geq r_{obs}$; therefore, the maximum course change is still $\pi/2$.
\end{proof}

\section*{Appendix C: Proof of Theorem 3}
\setcounter{subsection}{0}
Theorem 3 ensures that if the vehicle follows the established trajectory generation laws it reaches the goal position in finite time and clears all obstacles and vehicles by $r_c$. We examine each of these assertions separately.

\subsection{Goal Position Convergence}
\begin{proof}
From Assumption \ref{AssumpGoalPos} we know that the goal position is a valid target that is reachable without violating $r_c$ for other vehicles or obstacles. The vehicle only reaches the goal position once $||\mathbf{\dot{p}}_g|| = 0$; a condition we assume occurs in finite time during the flight. We define $e_{\phi_g} = \phi_g - \phi$ as the error in the course angle towards the goal position and $r_g$ as the distance to the goal position.

Maneuvering around obstacles and vehicles prevents the vehicle from always moving directly toward the goal position; therefore, $|\dot{e}_{\phi_g}| > 0$ at times. From Assumption \ref{AssumpLessCapable}, we know that $\min \limits_{k \in \mathcal{I}_{nr}} ||\mathbf{\dot{p}}_{k}|| > \max \limits_{ \in \mathcal{I}_{obs}}||\mathbf{\dot{p}}_{k}||$ so even the slowest vehicle is faster than the fastest obstacle. We can therefore assert that $v_{k,rem,S} > 0,~ \forall k \in \mathcal{I}_{obs}$ (Eq.~\ref{EqVrem}), so the vehicle is always capable of traversing the obstacle.

Additionally, from Assumption \ref{AssumpLessCapable} the obstacles are finite size, so the vehicle can traverse the obstacle in finite time until $\Delta \phi_g \in \mathbf{O}_{c,e}$ as long as the vehicle does not backtrack along the obstacle. The circumnavigation direction defined by Eq.~\ref{EqCircumDir} prevents the vehicle from backtracking along the obstacle since it defines a constant direction to traverse the obstacle, which restricts the course definition. The definition in Eq.~\ref{Eqzt} takes the cross product of the minimum distance vector, $\mathbf{p}_{k,min} - \mathbf{p}_d$, and the tangent direction vector, $\mathbf{p}_{k,S}$, for an obstacle $k$. If the cross product of the minimum distance vector and the vehicle velocity vector, $\mathbf{\dot{p}}_d$, is opposite the circumnavigation direction, then the vehicle traverses the obstacle in only one direction (the fixed circumnavigation direction) and thus does not backtrack.

To show that the cross product of $\mathbf{p}_{k,min} - \mathbf{p}_d$ and $\mathbf{\dot{p}}_d$ is opposite the circumnavigation direction, $\mathbf{z}_{k,S}$, the following equality must be satisfied:
\begin{equation}
	\label{EqPdCompareZlmin}
	\mathrm{sgn}\left(\left(\left(\mathbf{p}_{k,min} - \mathbf{p}_d\right) \times \mathbf{\dot{p}}_d \right) \cdot \mathbf{z}_I \right) \mathbf{z}_I = -\mathbf{z}_{k,S}
\end{equation}
\noindent or re-writing Eq.~\ref{EqPdCompareZlmin} in terms of Eq.~\ref{Eqzt} the inequality is
\begin{align}
	\label{EqPdcompareZlmin2}
	\mathrm{sgn}\left(\left(\left(\mathbf{p}_{k,min} - \mathbf{p}_d\right) \times \mathbf{\dot{p}}_d \right) \cdot \mathbf{z}_I \right) \mathbf{z}_I  = \\ \nonumber
	\mathrm{sgn}\left(\left(\left(\mathbf{p}_{k,min} - \mathbf{p}_d\right) \times \mathbf{p}_{k,S} \right) \cdot \mathbf{z}_I \right) \mathbf{z}_I 
\end{align}
The only difference in the left and right sides of Eq.~\ref{EqPdcompareZlmin2} is the $\mathbf{p}_{k,S}$ and $\mathbf{\dot{p}}_d$ terms. The vehicle velocity vector, $\mathbf{\dot{p}}_d$, is defined to match $\mathbf{v}_{k,S}$, which is determined from Eq.~\ref{EqVfDef} and can vary between $\mathbf{v}_{k,\perp,S}$ and $\mathbf{v}_{k,\parallel,S} = \mathbf{p}_{k,S}$, where $|\mathrm{angle}(\mathbf{p}_{k,min}-\mathbf{p}_d,\mathbf{v}_{k,\perp,S})| > |\mathrm{angle}(\mathbf{p}_{k,min}-\mathbf{p}_d,\mathbf{v}_{k,\parallel,S})|$ for convex obstacles. However, since we know that $||\mathbf{\dot{p}}_d|| > ||\mathbf{\dot{p}}_{k}||,~\forall k \in\mathcal{ ID}_{obs}$, then even if $\mathbf{v}_{k,\perp,S} = \mathbf{\dot{p}}_{k}$, $\mathbf{\dot{p}}_d$ is still in a direction between $\mathbf{v}_{k,\perp,S}$ and $\mathbf{p}_{k,S}$. Furthermore, the maximum angle between $\mathbf{v}_{k,\perp,S}$ and $\mathbf{p}_{k,min}-\mathbf{p}_{d}$ is less than $\pi$ for convex obstacles. Therefore, Eq.~\ref{EqPdcompareZlmin2} is always satisfied when $\mathbf{O}_{co} = \emptyset$ and/or $\mathbf{O}_{cv} = \emptyset$ for convex obstacles.

%Likewise, $\mathbf{v}_{k,\perp,S}$ is constrained to be parallel to $\mathbf{p}_d - \mathbf{p}_{k,min}$ or  is parallel to $\mathbf{v}_{k,\perp,S}$ and perpendicular to $\mathbf{p}_{k,S}$, we know that the cross product in Eq.~\ref{EqPdCompareZlmin} is always satisfied,  

In the case where there are non-convex obstacles and/or critical obstacles and vehicles present, then the vehicle may temporarily violate Eq.~\ref{EqPdCompareZlmin}. This could occur for a case where two vehicles that are both $\approx r_c$ from an obstacle approach each other from opposite directions. The vehicle velocities should be equal since there are both obstacles and vehicles within sensor range; therefore, the maneuvering vehicle likely makes a course change $> \pi/2$ to navigate out of the way. This could also occur for a non-convex obstacle with a velocity vector opposite the vehicle velocity when the vehicle chooses the conservative tangent direction solution. In either of these cases, Eq.~\ref{EqPdCompareZlmin} is violated, but only temporarily. To ensure that the vehicle reestablishes the proper circumnavigation, the following must be satisfied:
\begin{equation}
	\mathrm{sgn}\left(\mathrm{angle}(\mathbf{\dot{p}}_d,\mathbf{p}_{k,min})\right) \neq \mathrm{sgn}\left(\mathrm{angle}(\mathbf{\dot{p}}_d,\mathbf{v}_{k,S})\right)
\end{equation}
\noindent where it is assumed that there is an appreciable angle between $\mathbf{\dot{p}}_d$ and $\mathbf{p}_{k,min}$. If this condition is not satisfied, then the course change is modified to preserve $r_c$ spacing and circumnavigation direction as follows
\begin{equation}
	\Delta \phi^* = -\mathrm{sgn}({z}_{k,S}) |2 \pi - \Delta \phi|
\end{equation}
This upholds the circumnavigation direction to continue to traverse the obstacle in one direction.

Additionally, as the vehicle traverses obstacles, parts of the obstacle may go out of sensor range. For non-convex obstacles this could be detrimental because the vehicle may compute the goal position as valid when there is actually part of the obstacle blocking the path. To ensure that the vehicle continues to traverse an obstacle, the stored extent point discussed in Sec.~\ref{SecCourseChangeOneObs} and the modified feasible set $\mathbf{O}_{k,S}$ in Eq.~\ref{EqSetOrtRefined} ensure that even if the point is out of sensor range it is still constraining the course changes so the vehicle continues to traverse the obstacle in one direction and clear it. If the obstacle is moving, the stored extent point is projected forward in time by the estimated obstacle velocity.

If it can clear one obstacle in finite time and there are a finite number of obstacles (Assumptions \ref{AssumpLessCapable}), then the vehicle is able to clear all obstacles between its starting point and the goal position in finite time. 

Even though the other vehicles do not have a fixed circumnavigation direction, the vehicle still clears other vehicles in finite time. It is assumed that the other vehicles are also headed to goal positions, thus allowing the maneuvering vehicle to clear it. Even if a vehicle has slowed down to match velocity, eventually the vehicle either reaches its goal, or the non-maneuvering vehicle reaches its goal and the maneuvering vehicle can pass.

Once the vehicle has cleared all obstacles, Eq.~\ref{EqFinalDelPhi} results in $\Delta \phi = \Delta \phi_g$, whereby $e_{\phi_g} \rightarrow 0$. Since the goal position is reachable in finite time, the vehicle reaches $\mathbf{p}_g$ in finite time, at which point the algorithm generates a trajectory to bring the vehicle to the goal position in finite time. Thus, $r_g = 0$ for $||\mathbf{\dot{p}}_g || = 0$.
\end{proof}

\subsection{Clearance Radius Guarantee}
\begin{proof}
The clearance radius guarantee is dependent on both the velocity bound as well as the projected points that use the $r_c$ (for obstacles) or $r_{c,k}^*$ (for vehicles) clearance circle. The velocity bound addresses the vehicle's approach to obstacles to ensure that it has sufficient distance to make turns, as well as ensuring that for a minimum obstacle spacing the vehicle can maneuver around obstacles safely. 

Additionally, because of the unpredictable course change of other vehicles, the $r_{c,k}^*$ distance is calculated similar to the sensor constraint in Theorem 2. We assume that the vehicle is just outside of the $r_{c,k}^*$ circle so it travels at its current velocity until the next sensor update $\Delta T_s$ later. If re-prioritization is necessary, as discussed in Sec.~\ref{SecCourseChange}, then there is an additional $\Delta T_s$ before the vehicle re-prioritizes and generates a new trajectory. Assuming a worst case where the other vehicle is going the same speed and has changed course to come directly towards the vehicle, the vehicle must make a 180\degree ~turn and still clear the other vehicle by $r_c$. The $r_c$ values may be different for the two vehicles, and since this information is shared, the maneuvering vehicle takes the maximum clearance. Equation \ref{EqTheRcStar} provides the $r_{c,k}^*$ definition for another vehicle $k \in \mathcal{I}_{nr}$ as:
\begin{equation}
	\label{EqTheRcStar}
	r_{c,k}^*=\mathrm{max}\left(r_{c,k},r_c\right) + r_{180} + ||\mathbf{\dot{p}}_k|| (2\Delta T_s + \tau_{f,180} + \Delta T_c)
\end{equation}
\noindent where $r_c$ is the clearance radius of the current vehicle, $r_{c,k}$ is the clearance radius of sensed vehicle $k$, $r_{180}$ is defined by Eq.~\ref{EqDistanceSingleAxis} and $\tau_{f,180}$ is defined in Eq.~\ref{EqTauF180}. 

For both obstacles and vehicles the extension by $r_c$ and $r_{c,k}^*$, respectively, shown in Fig.~\ref{FigPadObsByRC} and defined in Sec.~\ref{SecCourseChangeOneObs}, ensures that the candidate projected extent points do not violate $r_c$. The selection of the conservative or aggressive tangent direction from Eq.~\ref{EqAggresConserv} then ensures that the vehicle clears the obstacle by $r_c$. 

%The clearance radius guarantee relies on how the vehicle cruise velocity is set, the obstacle velocity estimation, and obstacle spacing. We assume that the obstacle velocity estimation is accurate based on  the sensor information. Additionally, the cruise velocity bounds set the cruise velocity so that it can successfully match the velocity of an oncoming obstacle and make a turn that satisfies the minimum obstacle spacing. Since the vehicle does not start out in violation of the clearance radius, then the vehicle will be able to navigate safely.

%Additionally, the velocity change is conservatively set so that vehicles only pass when safe to do so, otherwise velocity is matched and the vehicle's are treated as other obstacles. 
\end{proof}

\section*{Appendix D: Proof of Acceleration Curve Unimodality}
The proof that the acceleration curve has a single peak guarantees not only that there is a solution for $H$ (from Eq.~\ref{EqHCubic}) that is physically meaningful, but that it is unique.  

\begin{proof}
We can show that the acceleration curve has a single maximum for all velocity ($0 \leq |\Delta v| \leq v_c$) and course ($0 \leq |\Delta \phi| \leq \pi$) changes. The solution for $H$ in Eq.~\ref{EqHCubic} therefore only has one real solution that satisfies $|H| \leq 1-\varepsilon$. Since the terms in the acceleration curve (Eq.~\ref{EqAccelMaxMag}) are squared, we can perform the analysis for positive $\Delta v$ and $\Delta \phi$ without loss of generality. Likewise, we perform the analysis for the terms within the square root (which are always positive) so the unimodality is preserved once the square root is taken. 

We start by examining the two terms within the square root of Eq.~\ref{EqAccelMaxMag} separately. Let us examine the second term first, which is $\dot{v}^2$. This term is maximized where $\frac{d}{dt} \dot{v}^2 = 2 \dot{v} \ddot{v} = 0$. Expanding this equation gives
\begin{equation}
	(d_1 d_2 (1-H^2)) (-2 d_1 d_2 H (1-H^2)) = 0
\end{equation}
\noindent where the solutions are $H = 0, \pm 1$. The physically meaningful solution is $H = 0$, corresponding to $K = 0.5$, which is expected since the expression for $\dot{v}$ is symmetric about $K = 0.5$. We write these solutions as $H_1 = 0$ and $K_1 = 0.5$ for later reference in the analysis.

Next, we examine the first term, $v^2 \dot{\phi}^2$, which is maximized where $\frac{d}{dt} v^2 \dot{\phi}^2 = 0$. Expanding this equation results in
\begin{equation}
	\label{EqDerivAccelMaxMagTerm1}
	2v \dot{\phi} \left(v \ddot{\phi} + \dot{v} \dot{\phi}\right) = 0
\end{equation}
\noindent After substituting the definitions for $K$ and $H$ from Eqs.~\ref{EqKDef} and \ref{EqHDef} and the sigmoid functions from Eqs.~\ref{EqSigPhi} and \ref{EqSigV} into Eq.~\ref{EqDerivAccelMaxMagTerm1}, we examine the two terms of Eq.~\ref{EqDerivAccelMaxMagTerm1} separately. The first term gives
\begin{align}
	2v\dot{\phi} &= 0 \nonumber \\
	(d_1H + d_4)\left(c_1 c_2 \left(1-H^2\right)\right) &= 0 
\end{align} 
\noindent where $H = \pm 1$ for all three roots since $d_1 = d_4$. These roots do not satisfy $|H| \leq 1-\varepsilon$, so they are not physically meaningful solutions.

The second term of Eq.~\ref{EqDerivAccelMaxMagTerm1} is expanded as follows:
\begin{align}
	\left(v \ddot{\phi} + \dot{v} \dot{\phi}\right) = 0 \nonumber \\
	(d_1 H + d_4)\left(-2 c_1 c_2 H \left(1-H^2\right)\right) + \nonumber \\
	\left(d_1 d_2 \left(1-H^2 \right) \right) \left(c_1 c_2 \left(1-H^2 \right)\right) = 0 \nonumber \\
	c_1 c_2 \left(1-H^2\right)\left(-2H\left(d_1 H + d_4\right) + d_1 \left(1-H^2\right)\right) = 0
\end{align}
\noindent where there are again roots at $H = \pm 1$. There is also a quadratic in $H$ which we isolate and re-write as
\begin{equation}
	-3 d_1 H^2 - 2 d_4 H + d_1 = 0
\end{equation}
\noindent where the roots are
\begin{equation}
	\label{EqHRootsAccelCurveShape}
	H = \frac{-2d_4 \pm \sqrt{4 d_4^2 + 12 d_1^2}}{-6 d_1}
\end{equation}
\noindent Since $d_4 = d_1$, Eq.~\ref{EqHRootsAccelCurveShape} simplifies to $H = -1$ or $H = \frac{1}{3}$. The physically meaningful solution is $H = \frac{1}{3}$ corresponding to $K = 0.5456$. We write these solutions as $H_2 = \frac{1}{3}$ and $K_2 = 0.5456$ for reference later in the analysis.

Now that we know the locations of the maxima for each term and we also know the terms have the same timespan, $\tau_f$, we compute the intersection point of the curves to determine how the curves add together. 

The intersection point is where $v^2 \dot{\phi}^2 = \dot{v}^2$. We again substitute $K$ and $H$ into the sigmoid functions, and simplify to produce Eq.~\ref{EqHIntersectAccelCurveShape}:
\begin{align}
	v^2 \dot{\phi}^2 &= \dot{v}^2 \nonumber \\
	(d_1H_{int} + d_4)^2\left(c_1c_2(1-H_{int}^2)\right) &= \left(d_1 d_2 (1-H_{int}^2)\right)^2 \nonumber \\
	(d_1H_{int} + d_4)^2 c_1^2 &= d_1^2 \nonumber \\
	d_1(H_{int}+1) c_1 &= d_1 \nonumber \\
	H_{int}+1 &= \frac{1}{c_1} \nonumber \\
	\label{EqHIntersectAccelCurveShape}
	H_{int} &= \frac{1}{c_1} - 1 = \frac{2} {\Delta \phi} - 1
\end{align}

We know the maximum acceleration locations are at $H_1 = 0$ and $H_2 = \frac{1}{3}$. From these solutions we can determine the three regions that bound the course changes and corresponding curve intersection points as shown in Fig.~\ref{FigConstantAccelShape} and given by
\begin{align}
	&\mathrm{region}~\mathrm{1}~~H_{int} < 0 \\
	\label{EqHVarySectionB}
	&\mathrm{region}~\mathrm{2}~~0\leq H_{int} \leq \frac{1}{3} \\
	&\mathrm{region}~\mathrm{3}~~ H_{int} > \frac{1}{3}
\end{align}

\begin{figure}
	\centering
		\includegraphics[width=2.95in]{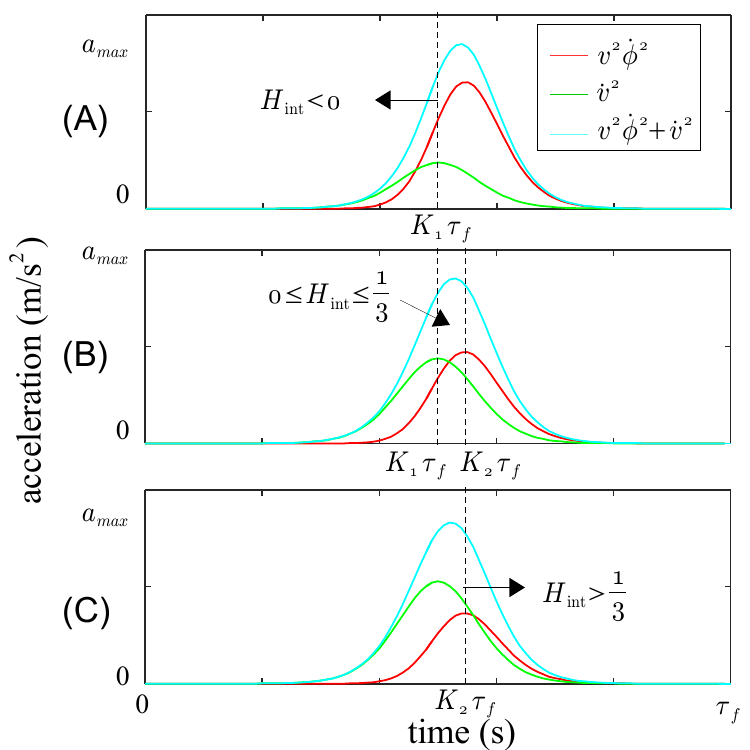}
		\caption{\label{FigConstantAccelShape} Example curves to illustrate the locations of the intersection points between the two curves. (A) The intersection point is prior to the peak of the $\dot{v}^2$ curve which occurs for any $H_{int} < 0$. (B) The intersection point is between the peaks of the curves which occurs for $0 \leq H \leq \frac{1}{3}$. (C) The intersection point is after the peak of the $v^2 \dot{\phi}^2$ curve which occurs for $H_{int} > \frac{1}{3}$. }
	
\end{figure}

In each of the regions, prior to the first peak at $K_1 \tau_f$, both curves are increasing, and after the second peak at $K_2 \tau_f$, both curves are decreasing. Therefore, any maxima of the summed curve must occur between $K_1 \tau_f$ and $K_2 \tau_f$. 

For the cases illustrated in Fig.~\ref{FigConstantAccelShape}A and C, the intersection of the curves is outside of the region between $K_1 \tau_f$ and $K_2 \tau_f$, where the slopes of both curves are either positive or negative, respectively. For these cases, the sum of the curves produces a single maxima since one curve is monotonic in these intervals.

In the case of the region in Fig.~\ref{FigConstantAccelShape}B, the intersection point is between $K_1 \tau_f$ and $K_2 \tau_f$. To have multiple minima, the following two inequalities must be satisfied
\begin{align}
	\label{EqSinglePeakIneq1}
	v(K_{int}\tau_{f})^2 &\dot{\phi}(K_{int}\tau_{f})^2 + \dot{v}(K_{int}\tau_{f})^2\leq v(K_1\tau_{f})^2 \dot{\phi}(K_1\tau_{f})^2 + \dot{v}(K_1\tau_f)^2 \\
	\label{EqSinglePeakIneq2}
	v(K_{int}\tau_{f})^2 &\dot{\phi}(K_{int}\tau_{f})^2 + \dot{v}(K_{int}\tau_{f})^2 \leq v(K_2\tau_{f})^2 \dot{\phi}(K_2\tau_{f})^2 + \dot{v}(K_2\tau_{f})^2
\end{align}
These inequalities are further simplified by substituting Eq.~\ref{EqHIntersectAccelCurveShape}, and the sigmoid functions into Eqs.~\ref{EqSinglePeakIneq1} and \ref{EqSinglePeakIneq2}
\begin{align}
	\label{EqSinglePeakRefinedIneq1}
	2 \left(H_{int}^2-1\right)^2 \leq \left(H_1^2 -1\right)^2 + \frac{\left(H_1+1\right)^2}{\left(H_{int} + 1\right)^2} \left(H_1^2 - 1\right)^2 \\
	\label{EqSinglePeakRefinedIneq2}
	2 \left(H_{int}^2-1\right)^2 \leq \left(H_2^2 -1\right)^2 + \frac{\left(H_2+1\right)^2}{\left(H_{int} + 1\right)^2} \left(H_2^2 - 1\right)^2 
\end{align}

The values for $H_{int}$ vary according to Eq.~\ref{EqHVarySectionB} and we know the values for $H_1$ and $H_2$. Over this range the inequalities are not satisfied, meaning that there are not multiple maxima. We therefore conclude that the acceleration curve has a single maximum and a unique solution for $H$. 

This result is used in Appendices A and B to prove that the sigmoid curve timespan does not violate the maximum thrust, and that the cruise velocity is set appropriate for the vehicle's hardware constraints and environment.
\end{proof}

\section*{Appendix E: Proof of Constant Velocity as the Most Constraining Case}
The analysis to prove that the constant velocity case is the most constraining case for course changes $\Delta \phi \geq \pi/2$ is important because it guarantees that Appendix B uses the maximum distance traveled for the cruise velocity sensor constraint.

Consider Fig.~\ref{FigSigAreaExample}, which shows that the area under the curve (distance traveled) for the constant velocity case is always greater than the changing velocity case over the same or longer $\tau_{f,s}$.

\begin{figure}
	\centering
		\includegraphics[width=2.95in]{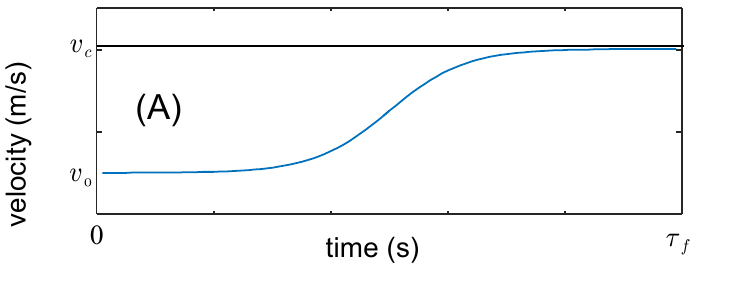}
		\caption{\label{FigSigAreaExample} Example sigmoid function for velocity change showing difference in area under the curve for constant ($\Delta v = 0$) and changing ($|\Delta v| > 0$) velocity cases over the same timespan.}
	
\end{figure}

To show that the constant velocity case has a longer timespan than the changing velocity case, we consider the solution for $\tau_{f,min}$ from Eq.~\ref{EqTauFMinDef}, where the only difference between the two cases is the $S_{traj}$ term defined in Eq.~\ref{EqStrajDef}. 

We know that there are two cases for velocity change, either acceleration or deceleration. We also assume that any change is to either accelerate to $v_c$ or decelerate from $v_c$. This allows us to generically define the following terms:
\begin{align}
	v_n &= v_c - \Delta v,~~~\mathrm{acceleration}\\
	v_n &= v_c,~~~\mathrm{deceleration} \\
	\label{Eqd1vc}
	d_1 &= \pm \frac{1}{2} \Delta v \\
	\label{Eqd4vc}
	d_4 &= v_c - \frac{1}{2} \Delta v
\end{align}
Substituting Eqs.~\ref{Eqd1vc} and \ref{Eqd4vc} into Eq.~\ref{EqStrajDef} results in
\begin{equation}
	\label{EqStrajvc}
	\left(c_1\left(v_c - \frac{1}{2}\Delta v \pm \frac{1}{2} \Delta v H\right)\right)^2 + \left(\frac{1}{2} \Delta v \left(1-H^2\right)\right)^2
\end{equation}
\normalfont
\noindent Further manipulation of Eq.~\ref{EqStrajvc} results in
\begin{equation}
	c_1^2 v_c^2 \left(\left(1-H^2\right)^2 \left(1\pm \frac{\Delta v}{v_c} (H\mp 1) + 
	\frac{1}{4} \frac{\Delta v^2}{v_c^2} (H \mp 1)^2 + \frac{1}{4} \frac{\Delta v^2}{v_c^2} \frac{1}{c_1^2}\right)\right)
\end{equation}
For comparison, the $S_{traj}$ terms for $\Delta v = 0$ is defined in Eq.~\ref{EqTauFN1ConstantVelGen}; therefore, for the $S_{traj}$ term to be less for the $|\Delta v|>0$ case, the following inequality must be satisfied:
\begin{equation}
	\label{EqStrajConservIneq}
	\left((1-H^2)^2\left(1 \pm \frac{\Delta v}{v_c}(H\mp 1) +  \frac{1}{4} \frac{\Delta v^2}{v_c^2} (H\mp1)^2 + \frac{1}{4} \frac{\Delta v^2}{v_c^2} \frac{1}{c_1^2}\right)\right) \leq 1 \\
\end{equation}
\noindent where the solution to $H$ from Eq.~\ref{EqHCubic} is dependent on $\Delta v/v_c$ and $c_1= 0.5\Delta \phi$. For any specific case these are all known quantities; however, the analytical expression for the generic case is unwieldy and does not provide useful insight. Instead, we know the bounds for $H$ and $\Delta v/v_c$ as:
\begin{align}
	\label{EqHmaxLimits}
	-H_{max} &\leq H \leq H_{max} \\
	\label{EqDelVvcLims}
	0 &\leq \frac{\Delta v}{v_c} \leq 1 
\end{align}
\noindent where $\pm H_{max}$ are the roots of
\begin{equation}
	\label{EqHCubicSimplified}
	-3c_1^2 H^3 \mp 5c_1^2 H^2 - (c_1^2 + 2)H \pm c_1^2 = 0
\end{equation}
\noindent which is derived from Eq.~\ref{EqHCubic} in the case of $\Delta v = \pm v_c$. 

To develop the bounds on cruise velocity based on sensor range, we know that the course change must satisfy $\pi/2 \leq \Delta \phi \leq \pi$. The solution to Eq.~\ref{EqHCubicSimplified} for these inputs gives $0.19\leq H_{max}\leq 0.29$. Even though $H$ and $\Delta v/v_c$ are not independent, we can conservatively treat them independently and evaluate Eq.~\ref{EqStrajConservIneq} for $0.19 \leq H_{max} \leq 0.29$ and the expected range of $\Delta v/v_c$ from Eq.~\ref{EqDelVvcLims}. For all values in this range the inequality is satisfied, so the constant velocity case is the most constraining case for vehicle distance traveled.

This result is used in Appendix B to prove the cruise velocity constraint for the vehicle's sensor parameters is set appropriately for the environment.

\bibliographystyle{unsrt}
\bibliography{C:/Users/cole/Documents/ColeThesisWork/Documentation/ReferencesBibFiles/Controls}

\begin{thebibliography}{10}

\bibitem{Echodyne}
Echodyne.
\newblock {\em www.echodyne.com}.

\bibitem{Newmeyer2016}
L.~Newmeyer, D.~Wilde, B.~Nelson, and M.~Wirthlin.
\newblock Efficient processing of phased array radar in sense and avoid
  application using heterogeneous computing.
\newblock {\em 26th International Conference on Field Programmable Logic and
  Applications}, 2016.

\bibitem{SweepV1}
Sweep v1 laser scanner.
\newblock {\em http://www.robotshop.com/en/sweep-v1-360-laser-scanner.html}.

\bibitem{Hoy2014}
M.~Hoy, A.~Matveev, and A.~Savkin.
\newblock Aalgorithm for collision-free navigation of mobile robots in complex
  cluttered environments.
\newblock {\em Robotica}, 33:463--497, 2014.

\bibitem{Turpin2014}
M.~Turpin, K.~Mohta, N.~Michael, and V.~Kumar.
\newblock Goal assignment and trajectory planning for large teams of
  interchangeable robots.
\newblock {\em Autonomous Robots}, 2014.

\bibitem{Bhattacharya2015}
S.~Bhattacharya, R.~Ghrist, and V.~Kumar.
\newblock Persistent homology for path planning in uncertain environments.
\newblock {\em IEEE Transactions on Robotics}, 2015.

\bibitem{VanLoock2014}
W.~VanLoock, G.~Pipeleers, M.~Diehl, J.~De~Schutter, and J.~Swevers.
\newblock Optimal path following for differentially flat robotic systems
  through a geometric problem formulation.
\newblock {\em IEEE Transactions on Robotics}, 2014.

\bibitem{AlonsoMora2015}
J.~Alonso-Mora, T.~Naegeli, R.~Siegwart, and P.~Beardsley.
\newblock Collision avoidance for aerial vehicles in multi-agent scenarios.
\newblock {\em Autonomous Robots}, 2015.

\bibitem{Shiller2013}
Z.~Shiller, S.~Sharma, I.~Stern, and A.~Stern.
\newblock Online obstacle avoidance at high speeds.
\newblock {\em The Int. J. of Robotics Research}, 2013.

\bibitem{Chunyu2010}
J.~Chunyu, Z.~Qu, E.~Pollak, and M.~Falash.
\newblock Reactive target-tracking control with obstacle avoidance of unicycle
  type mobile robots in a dynamic environment.
\newblock {\em ACC, Baltimore, MD}, 2010.

\bibitem{Tang2013}
S.H. Tang, C.K. Ang, D.~Nakhaeinia, B.~Karasfi, and O.~Motlagh.
\newblock A reactive collision avoidance approach for mobile robot in dynamic
  environements.
\newblock {\em J. of Aut. and Ctrl Eng.}, 2013.

\bibitem{Matveev2015}
A.~Matveev, M.~Hoy, and A.~Savkin.
\newblock A globally converging algorithm for reactive robot navigation amoung
  moving and deforming obstacles.
\newblock {\em Automatica}, 2015.

\bibitem{Chang2015}
K.~Chang, Y.~Xia, and K.~Huang.
\newblock Coordinated formation control design with obstacle avoidance in
  three-dimensional space.
\newblock {\em Journal of the Franklin Institute}, 2015.

\bibitem{Bekris2012}
K.E. Bekris, Grady D.K., M.~Moll, and L.E. Kavraki.
\newblock Safe distributed motion coordination for second-order systems with
  different planning cycles.
\newblock {\em International Journal of Robotics Research}, 31:129--149, 2012.

\bibitem{Zhang2010}
M.~Zhang, Y.~Shen, Q.~Wang, and Y.~Wang.
\newblock Dynamic artificial potential field based multi robot formation
  control.
\newblock In {\em IEEE Instrumentation and Measurement Technology Conference},
  2010.

\bibitem{Sarkar2013}
S.~Sarkar and Kar. I.N.
\newblock Formation control of multiple group of robots.
\newblock {\em 52nd IEEE Conference on Decision and Control}, 2013.

\bibitem{Belta2004}
C.~Belta and V.~Kumar.
\newblock Abstraction and control for groups of robots.
\newblock {\em IEEE Transactions on Robotics}, 2004.

\bibitem{Hedjar2014}
R.~Hedjar and M.~Bounkhel.
\newblock Real-time obstacle avoidance for a swarm of autonomous mobile robots.
\newblock {\em International Journal of Advanced Robotics Systems}, 2014.

\bibitem{Choi2013}
H.~Choi, Y.~Kim, and I.~Hwang.
\newblock Reactive collision avoidance of unmanned aerial vehicles using a
  single vision sensor.
\newblock {\em Journal of Guidance, Control, and Dynamics}, 2013.

\bibitem{Roelofsen2015}
S.~Roelofsen, A.~Martinoli, and D.~Gillet.
\newblock Distributed deconfliction algorithm for unmanned aerial vehicles with
  limited range and field of view sensors.
\newblock {\em American Control Conference, Chicago, IL}, 2015.

\bibitem{Hoy2012}
M.~Hoy, A.~Matveev, and A.~Savkin.
\newblock Collision free cooperative navigation of multiple wheeled robots in
  unknown cluttered environments.
\newblock {\em Robotics and Autonomous Systems}, 2012.

\bibitem{Ferrera2017}
E.~Ferrera, J.~Capit\'an, A.R. Casta\~no, and P.J. Marr\'on.
\newblock Decentralized safe conflict resolution for multiple robots in dense
  scenarios.
\newblock {\em Robotics and Autonomous Systems}, 91:179--193, 2017.

\bibitem{Cabecinhas2013}
D.~Cabecinhas, R.~Cunha, and C.~Silvestre.
\newblock Experimental validation of a globally stabilizing feedback controller
  for a quadrotor aircraft with wind disturbance rejection.
\newblock In {\em American Control Conference, Washington, DC}, 2013.

\bibitem{Waslander2009}
S.~Waslander and C.~Wang.
\newblock Wind disturbance estimation and rejection for quadrotor position
  control.
\newblock In {\em AIAA Unmanned Unlimited, Seattle, WA}, 2009.

\bibitem{Sydney2013}
N.~Sydney, B.~Smyth, and D.~Paley.
\newblock Dynamic control of autonomous quadrotor flight in an esimated wind
  field.
\newblock {\em IEEE Conference on Decision and Control}, 2013.

\bibitem{FischerNL2014}
N.~Fischer, D.~Hughes, P.~Walters, E.~Schwartz, and W.~Dixon.
\newblock Nonlinear rise-based control of an autonomous underwater vehicle.
\newblock {\em IEEE Transactions on Robotics}, 2014.

\bibitem{Cole2017}
K.~Cole and A.~Wickenheiser.
\newblock Reactive trajectory generation in an unknown environment.
\newblock {\em IEEE/RSJ International Conference on Intelligent Robots and
  Systems, Vancouver, BC, Canada}, 2017.

\bibitem{Fischer2014}
N.~Fischer, A.~Kan, R.~Kamalapurkar, and W.~Dixon.
\newblock Saturated rise feedback control for a class of second-order nonlinear
  systems.
\newblock {\em IEEE Transactions on Automatic Control}, 2014.

\bibitem{Leishman2006}
G.~Leishman.
\newblock {\em Principles of Helicopter Aerodynamics}.
\newblock Cambridge University Press, 2006.

\bibitem{Bramwell}
A.R.S. Bramwell, G.~Done, and D.~Balmford.
\newblock {\em Bramwell's Helicopter Dynamics}.
\newblock AIAA and Butterworth-Heinemann, 2001.

\bibitem{Newman1994}
S.~Newman.
\newblock {\em The foundations of helicopter flight}.
\newblock Halsted Press, 1994.

\bibitem{Hoffmann}
G.~Hoffmann, H.~Huang, S.~Waslandar, and C.~Tomlin.
\newblock Quadrotor helicopter flight dynamics and control: Theory and
  experiment.
\newblock {\em AIAA Guidance, Navigation, and Control Conference, Hilton Head,
  SC}, 2007.

\bibitem{Bialy2013}
B.~Bialy, J.~Klotz, K.~Brink, and W.E Dixon.
\newblock Lyapanov-based robust adaptive control of a quadrotor uav in the
  presence of modeling uncertainties.
\newblock In {\em ACC, Washington, DC}, 2013.

\bibitem{Cao2016}
N.~Cao and A.F. Lynch.
\newblock Inner-outer loop control of a quadrotor uavs with input and state
  constraints.
\newblock {\em IEEE Transactions on Control Systems Technology}, 2016.

\bibitem{Cole2013}
K.~Cole and A.~Wickenheiser.
\newblock Impact of wind disturbances on vehicle station keeping and trajectory
  following.
\newblock {\em AIAA Guidance, Navigation, and Control Conference, Boston, MA},
  2013.

\bibitem{MilWind}
{U.S. Military Specification MIL-F-8785C}.
\newblock 5 November 1980.

\end{thebibliography}

\vspace{-4in}
\begin{IEEEbiography}[{\includegraphics[width=1in]{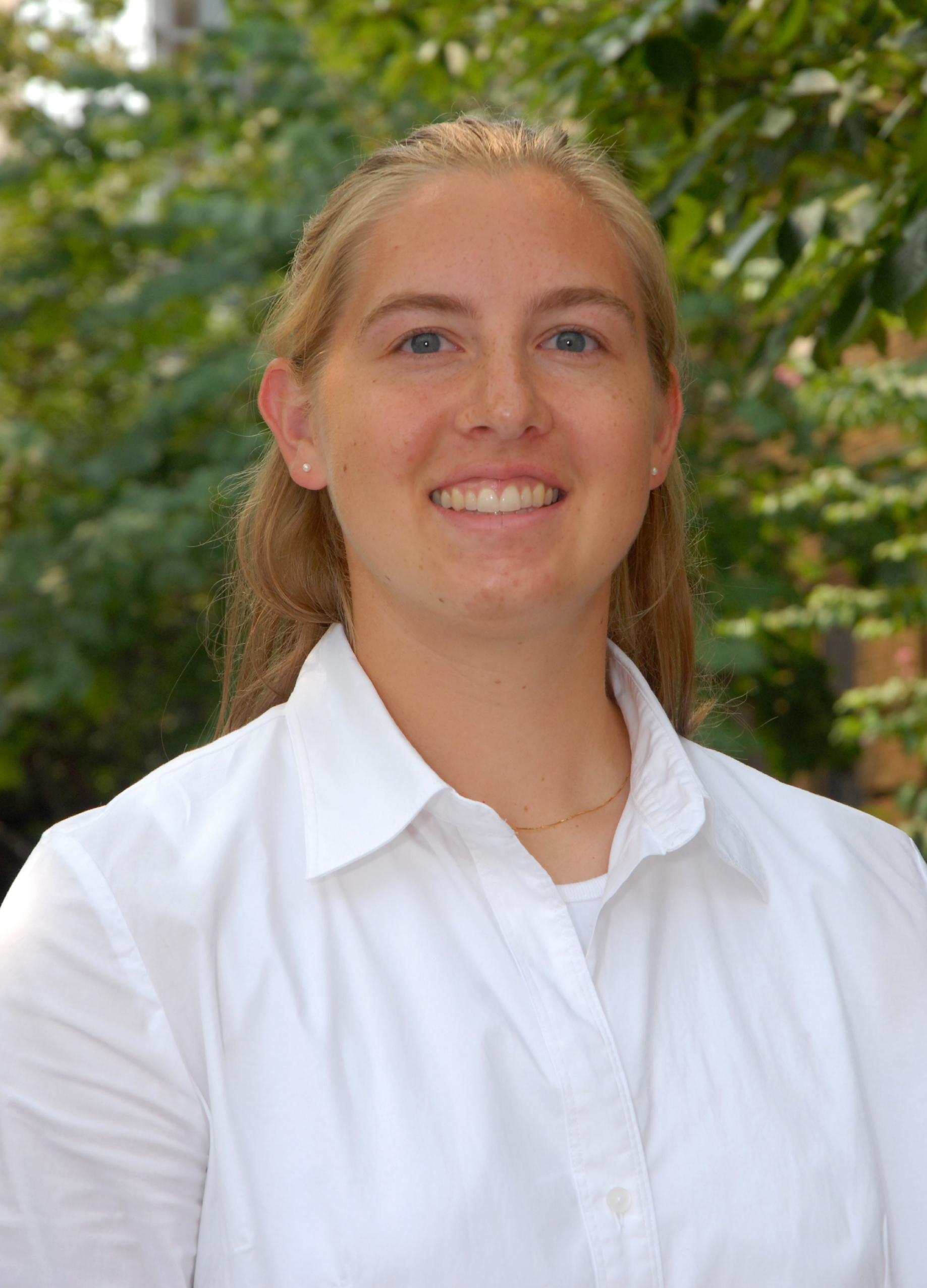}}]{Kenan Cole}
Kenan Cole received her B.S. and M.S. degrees in mechanical engineering from The George Washington University (GUW) in 2007 and 2011, respectively. She is currently pursuing a Ph.D. at GWU in mechanical engineering focused on multi-vehicle controls. Her research interests include vehicle controls, formation controls, and modeling environmental disturbances. 
\end{IEEEbiography}
\vspace{-4in}
\begin{IEEEbiography}[{\includegraphics[width=1in]{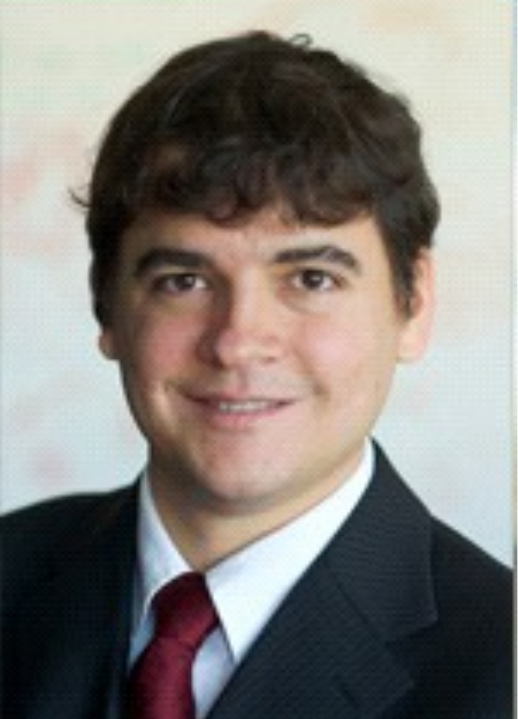}}]{Adam Wickenheiser}
Adam M. Wickenheiser (M’08) received the B.S. degree in mechanical engineering (with a minor in applied mathematics) in 2002 and the M.S. and Ph.D. degrees in aerospace engineering from Cornell University, Ithaca, in 2006 and 2008, respectively.

Since 2010, he has been the Faculty of the Department of Mechanical \& Aerospace Engineering at the George Washington University, Washington, DC, where he is currently an Assistant Professor.  From 2008-2009, he was a postdoctoral associate with the Sibley School of Mechanical \& Aerospace Engineering at Cornell University.  His current research interests include bio-inspired flight, multi-functional materials and systems, and energy harvesting for autonomous systems.

Prof. Wickenheiser has served as the Chair of the Energy Harvesting Technical Committee of the American Society of Mechanical Engineers (ASME) from 2014-2016, and is presently a member of the International Organizing Committee for the International Conference on Adaptive Structures and Technologies (ICAST). He was the recipient of the 2011 Intelligence Community Young Investigator Award.
\end{IEEEbiography}

\end{document}